\def\bcases{\begin{cases}}
\def\ecases{\end{cases}}
\newcommand{\bea}{\begin{eqnarray*}}
\newcommand{\eea}{\end{eqnarray*}}
\newtheorem{thm}{Theorem}
\newtheorem{cor}{Corollary}
\newtheorem{lemma}{Lemma}
\newtheorem{prop}{Proposition}
\theoremstyle{definition}
\newtheorem{defn}{Definition}
\newtheorem{example}{Example}
\theoremstyle{remark}
\newtheorem{remark}{Remark}
\newcommand{\be}{\begin{equation}}
\newcommand{\ee}{\end{equation}}
\title{Examples of minimal laminations and associated currents}
      \author{John Erik Forn\ae ss, Nessim Sibony, Erlend Forn\ae ss Wold}
\begin{document}
\maketitle

\begin{abstract}
In this paper, we construct various examples
of holomorphic laminations,with leaves of dimension 1, and we also study some of their dynamical properties.  In particular we study existence and uniqueness of positive closed  currents. We construct minimal laminations with infinitely many mutually singular closed currents and no non-closed harmonic current. We also consider embeddings to projective space. 
\end{abstract}
 
\section{Introduction}





There are several surveys on laminations by Riemann surfaces.  E. Ghys in \cite{Gh1999}
gives a nice introduction to the theory. The recent survey \cite{FS2008} by two of the authors focuses on holomorphic foliations with singularities, in particular ergodic properties of holomorphic foliations in $\mathbb P^2.$
There are few examples in the literature of foliations by Riemann surfaces in higher 
dimensional projective spaces or with leaves of dimension $>1$, see however 
Sullivan \cite{S1976}, Ghys \cite{Gh1999}, Loray-Rebelo \cite{LR2003},  
Candel-Conlon \cite{CC1999}, L. Garnett
\cite{G1983}, Deroin \cite{D2003}, and \cite{FS2008}.

In this paper we construct some examples and we explain their dynamics. We consider the problem of embedding them into $\mathbb P^k.$
We also study the properties of directed positive 
$\partial \overline{\partial}$-closed currents associated to these laminations.

In section 2 we construct laminations as projective limits of sequences 
of compact complex 
manifolds.  These laminations admit unique directed closed currents of mass one.  We then
show that these projective limits
embed to $\mathbb{CP}^k$.  The embedded laminations admit unique 
closed directed currents of mass one and every 
$\partial \overline{\partial}$-closed directed current is closed.  In the Riemann surface
case we construct projective limits in a way that allows us to control the topology of the leaves.  
(In \cite{Gh1999} p.52, Ghys has constructed a laminated set in $\mathbb P^3$ using a suspension.)

In section 3 we explore the properties of some foliations obtained by suspension using some 
remarkable diffeomorphisms of the real $2-$torus constructed by Furstenberg \cite{Fu1961}. In particular we exhibit a minimal lamination by Riemann surfaces with uncountably many extremal positive directed closed currents and for which every directed $\partial \overline{\partial}-$closed current is closed.

In section  4 we give an abstract criteria for the existence of positive 
$\partial\overline\partial$-closed currents.    
We construct laminations with infinitely many extremal positive 
$\partial \overline{\partial}$-closed currents, and, opposed to the Riemann 
surface case, we construct a two dimensional lamination in $\mathbb P^2\times\mathbb P^2$ 
with no positive 
$\partial\overline\partial$-closed current.  

In section 5 we discuss an interesting functional and use it to give examples 
of laminations with no non-closed positive $\partial\overline\partial$-closed
currents. 


\section{Construction of laminations as projective limits}

\subsection{Preliminaries}

\begin{defn}
Let $Y$ be a Hausdorff topological  space. 
Then $(Y,\mathcal L)$ is a lamination by complex manifolds 
of dimension $k$ if
${\mathcal L}$ is an atlas with charts
$$
\phi_{U}:U \rightarrow B \times T_U
$$
\noindent where $B$ is the unit ball in $\mathbb C^{k}$, 
$T_U$ is a topological space and $\phi$
is a homeomorphism.  The space $\phi_{U}^{-1}(\{z\}\times T_{U})$
is  called a \emph{transversal}.  It is sometimes identified with $T_{U}$. 
The change of coordinates should be of the form
$$
(z,t) \overset{\phi_{U,V}}{\rightarrow} (z',t'),\; {\mbox{with}}\; z'=h(z,t),t'=t'(t)
$$
\noindent where $h$ is continuous and holomorphic with respect to $z.$
A \emph{plaque} is a set of the form $\phi_{U}^{-1}(B\times\{t\})$. 
A \emph{leaf} is a minimal connected set $L$ such that if a plaque
intersects $L$ then it is contained in $L$.  The open set U is called a flow box.  A
lamination is called minimal if all leaves are dense.
\end{defn}

On $B\times T_{U}$ we consider the space of partially smooth $l$-forms,
$l\leq 2k$, \emph{i.e.}, forms that are smooth on plaques, that depend continuously 
on the parameter $t$, and whose derivatives depend continuously on $t$.  
The topology is the $\mathcal C^\infty$-topology
on plaques.  $\mathcal A^l(\mathcal L)$ will denote the space of $l$-forms
on the lamination, \emph{i.e.}, if $\varphi$ is in $\mathcal A^l(\mathcal L)$
then $(\phi_{U})_{*}\varphi$ is an $l$-form on $B\times T_{U}$.  So for each t
we have an $l$-form on B, depending continuously on t. 
This space is a Fr\'{e}chet space and it is easy to define the Poincar\'{e}
operator $d:\mathcal A^l(\mathcal L)\rightarrow\mathcal A^{l+1}(\mathcal L)$, 
the operator being exterior differentiation on leaves.  It is also easy to define 
the space $\mathcal A^{p,q}(\mathcal L)$ of $(p,q)$-forms on $\mathcal L$
and the operators $\partial$ and $\overline\partial$ acting on leaves. 
Currents of dimension $l$ are defined as continuous linear functionals on $\mathcal A^l(\mathcal L)$. 
We note that this is different from the directed currents one usually studies on 
embedded laminations in complex manifolds.  In that case the test forms are the 
ones restricted from the ambient space, \emph{i.e.}, there are fewer test forms and 
more currents (a priori). 
When the current defines a linear form on $\mathcal A^{(p,p)}(\mathcal L)$ we say that it is of 
bidimension (p,p).

\

The following local representation corresponds to Theorem I.12 in \cite{S1976}.

\begin{prop}\label{local}
Let $(Y,\mathcal L)$ be a k-dimensional lamination.   If $T$ is a 
closed current of order zero and of bidimension (k,k) on $(Y,\mathcal L)$
then $T$ is locally of the form 
$$
T=\int_{T_{U}} [B]\mathrm{d}\mu,
$$
where $\mu$ is a measure on the transversal $T_{U}$ and [B] denotes the current of integration on B.  If $\partial\overline\partial T=0$ 
then $T$ is locally of the form
$$
T=\int_{T_{U}} h_{t}\cdot[B]\mathrm{d}\mu,
$$
where $h_{t}$ is a positive pluriharmonic function.   
\end{prop}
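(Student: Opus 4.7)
\textbf{Proof plan for Proposition~\ref{local}.}

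The strategy is to work inside a single flow box $U\simeq B\times T_U$ and disintegrate $T$ over the transversal. Since $T$ is of bidimension $(k,k)$, on $U$ it acts only on partially smooth top-degree forms $\alpha(z,t)=f(z,t)\,\omega_0(z)$, with $\omega_0$ the standard volume form on $B\subset\C^k$ and $f$ continuous in $(z,t)$. Combined with the order-zero hypothesis, the Riesz representation theorem identifies $T$ with a Radon measure $\nu$ on $B\times T_U$ via $T(f\omega_0)=\int f\,d\nu$ (assuming $T$ positive; the signed case follows from the Jordan decomposition). Let $\mu=\pi_{*}\nu$ be the pushforward under the projection to $T_U$ and disintegrate $\nu=\int_{T_U}\widetilde S_t\,d\mu(t)$, with each $\widetilde S_t$ a probability measure on $B$ viewed, through $\omega_0$, as a current of bidimension $(k,k)$.

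Next I would transfer the closedness of $T$ to the disintegration. Exterior differentiation $d$ is leafwise, so for a partially smooth $(2k-1)$-form $\beta$ one has $T(d\beta)=\int_{T_U}\widetilde S_t(d_z\beta(\cdot,t))\,d\mu(t)$. Testing against product forms $\beta(z,t)=\psi(t)\gamma(z)$ with $\gamma\in C_c^\infty(B,\Lambda^{2k-1})$ and $\psi\in C_c(T_U)$, and invoking separability of the relevant space of $\gamma$, yields $d\widetilde S_t=0$ on $B$ for $\mu$-almost every $t$. A closed current of top degree on the connected open set $B$ is a constant multiple of Lebesgue; together with the constraint that $\widetilde S_t$ be a probability measure, this forces $\widetilde S_t=[B]/\Vol(B)$ for $\mu$-a.e.\ $t$. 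Rescaling $\mu$ absorbs the factor and yields $T=\int_{T_U}[B]\,d\mu$.

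The $\partial\overline\partial$-closed case is analogous. Testing $\partial\overline\partial T=0$ against product forms $\psi(t)\gamma(z)$ with $\gamma$ a $(k-1,k-1)$-test form on $B$ shows that $\partial\overline\partial\widetilde S_t=0$ for $\mu$-a.e.\ $t$. Writing $\widetilde S_t=h_t\omega_0$, the distributional identity $\partial\overline\partial h_t=0$ implies $\Delta h_t=0$ by taking the trace, so by Weyl's lemma $h_t$ is smooth and hence pluriharmonic. Positivity of $T$ transfers to positivity of each $\widetilde S_t$, hence to $h_t\geq 0$. Reabsorbing normalizations into $\mu$ gives $T=\int_{T_U}h_t\,[B]\,d\mu$.

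The main obstacle is the almost-everywhere upgrade: passing from the integrated identity $\int_{T_U}\widetilde S_t(d_z\gamma)\psi(t)\,d\mu=0$ valid for all $(\gamma,\psi)$ to the pointwise identity $d\widetilde S_t=0$ for $\mu$-a.e.\ $t$. This is a standard Fubini / monotone-class argument, but one must handle the partially smooth Fr\'echet topology on $\mathcal A^\bullet(\mathcal L)$ carefully; separability of $C_c^\infty(B,\Lambda^\bullet)$ is what ensures that a single $\mu$-null exceptional set works for all test forms simultaneously. Once this is in place, the remaining claims reduce to classical local facts on the ball $B$.
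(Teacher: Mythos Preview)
Your argument is correct and gives a self-contained proof. The paper's own proof is a three-line sketch that defers to Sullivan's Theorem~I.12, and the route it indicates is slightly different from yours. Sullivan's method is to view the positive closed (resp.\ $\partial\overline\partial$-closed) directed currents as a compact convex set and analyze its \emph{extremal} elements: one multiplies an extremal $T$ by cutoff functions that are constant along plaques and observes that the resulting pieces remain closed (resp.\ $\partial\overline\partial$-closed) and directed, which by extremality forces $T$ to be concentrated on a single plaque; a Choquet-type integral representation then gives the transverse measure $\mu$. Your approach inverts the order: you disintegrate \emph{first} against the projection to the transversal, and then pass the differential constraint through the disintegration by testing against tensor-product forms $\psi(t)\gamma(z)$ and invoking separability of $C_c^\infty(B,\Lambda^\bullet)$ to get a single $\mu$-null exceptional set.

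Both routes land on the same plaque-wise statement (a closed top-degree current on $B$ is a multiple of Lebesgue; a $\partial\overline\partial$-closed one has pluriharmonic density). Your argument is more elementary in that it avoids Choquet theory, at the cost of the separability bookkeeping you flag. Sullivan's is more structural and immediately identifies the building blocks as extremals. One small expository point: when you write ``$\Delta h_t=0$ by taking the trace, so by Weyl's lemma $h_t$ is smooth and hence pluriharmonic,'' the ``hence'' is justified only because the full system $\partial_{z_i}\partial_{\bar z_j}h_t=0$ (not just its trace) was already established distributionally, and smoothness upgrades it to a classical identity; harmonicity alone would not suffice.
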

\begin{proof}
The proof is the same as in \cite{S1976}.  Locally $T$ is represented by a current of maximal dimension. 
This current disintegrates and by studying the extremal elements using cutoff 
functions one concludes that they are closed (resp. pluriharmonic) on plaques.    
\end{proof}
 
 \begin{remark}
 Forn\ae ss-Wang-Wold has a counterexample to the corresponding result 
 for positive closed (resp. harmonic) currents directed by an embedded lamination.  
 The result holds however in complex dimension two \cite{FWW2008}. 
\end{remark}

\subsection{Projective Limits}

Consider a sequence $\{X_n\}_{n=1}^\infty$ of compact complex manifolds of dimension 
$d\geq 1$.
Assume there is a holomorphic covering map $f_n: X_{n+1}\rightarrow X_n$ of degree $d_n \geq 2.$
Consider the projective limit of the pairs $(X_n,f_n)$
$$
X_\infty = \lim_{\leftarrow} X_n.
$$

Recall that $x=\{x_n\}$ is an element of $X_\infty$ if the coordinates satisfy the relation $x_n=f_n(x_{n+1}).$
The topology of $X_\infty$ is the weakest topology such that the projections $\pi_n:X_\infty \rightarrow
X_n$ are continuous. Define $h_n= f_1 \circ \cdots \circ f_{n-1}$. 
Then $X_\infty$ is a compact space and can be given a natural structure of a holomorphic lamination such that the maps $\pi_n$ are holomorphic on leaves.
More precisely, let $p\in U\subset X_1$ be a small open set in $X_1.$ 
Define the transversal $T$ by $\pi_1^{-1}(p).$ 
The local chart is given by the sequences of all local sections of the $h_n$ through each preimage of $p.$
Each leaf is a covering manifold of each of the manifolds $X_{n}$.  
A basis for the topology consists of sets of the following type.  Let $U^m$ be an 
open subset of $X_{m}$ for some $m\geq 1$.  Then $U^m_{\infty}$ is defined as 
the set of points $\{x_{j}\}_{j=1}^\infty$ such that 
$x_{m}\in U^m$.  

We will consider that $X_1$ has a Hermitian form $g_1$ and that $X_n$ is endowed with the 
form $(h_n)^*g_1.$ So we have a local isometry between $X_n$ and $X_1$. 
We denote by $d_n$ the metric on $X_n$, and $d_\infty ({x_n},{x'_n}):=\underset{n}
{\sup} \ d_n(x_n,x'_n).$  Observe that $d_{\infty}$ takes values in $[0,\infty]$.

\begin{prop}
Let $X_{\infty}$ be a projective limit.  Then each transversal is a Cantor set
and each leaf is dense.  Two points ${x_n}$ and ${x'_n}$ are in the same leaf if and only if 
the sequence $d_n(x_n,x'_n)$ is bounded.
\end{prop}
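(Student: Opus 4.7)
The transversal $T = \pi_1^{-1}(p)$ is the inverse limit of the finite fibers $h_n^{-1}(p)$, of cardinality $\prod_{j<n} d_j$, under the restrictions of the $f_n$. Hence $T$ is compact, Hausdorff, totally disconnected, and metrizable; perfection follows from $d_n \geq 2$, since any coordinate $p_m$ of $\{p_n\}\in T$ has at least two preimages in $f_m^{-1}(p_m)$, yielding points of $T$ arbitrarily close to $\{p_n\}$ but distinct from it. Any such space is a Cantor set.

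\noindent\textbf{Density.} The sets $U^m_\infty$, $U^m\subset X_m$ open, form a basis. Given $x=\{x_n\}$ and such a basic set, choose $q\in U^m$ and a path $\gamma:[0,1]\to X_m$ from $x_m$ to $q$. Lift $\gamma$ through the coverings $f_m, f_{m+1},\dots$ starting at $x_{m+1}, x_{m+2},\dots$ to obtain paths $\gamma_n$ in $X_n$ for $n\geq m$, and set $\gamma_n := f_n\circ\gamma_{n+1}$ for $n<m$. Then $t\mapsto\{\gamma_n(t)\}$ is a continuous path in $X_\infty$ locally contained in plaques of the lamination, so it stays in the leaf of $x$; its endpoint lies in $U^m_\infty$.

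\noindent\textbf{Leaf characterization.} If $x,y$ lie on the same leaf, join them by a leaf path $\Gamma$; since $\pi_n$ is a local isometry on leaves, each $\pi_n\circ\Gamma$ has a common finite length $L$, giving $d_n(x_n,y_n)\leq L$. Conversely, assume $d_n(x_n,y_n)\leq M$ for every $n$, choose paths $\gamma_n$ of length $\leq M+1$ from $x_n$ to $y_n$, and form $\delta_n := h_n\circ\gamma_n$, an equi-Lipschitz family of paths in $X_1$ from $x_1$ to $y_1$. Arzel\`a--Ascoli yields a subsequence $\delta_{n_k}$ converging uniformly to some $\delta$. Let $\rho>0$ be smaller than both the injectivity radius and the systole of $X_1$ (both positive by compactness). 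For $k$ large, $\delta_{n_k}$ and $\delta$ stay within $\rho/2$ of each other, so the homotopy rel endpoints through short geodesics joins them in $X_1$; lifting this homotopy to $X_{n_k}$ from $x_{n_k}$ shows that the respective endpoints lie within $\rho/2$ inside $h_{n_k}^{-1}(y_1)$, and as distinct points of any such fiber are at least the systole apart, these endpoints coincide. Since the lift of $\delta_{n_k}$ is $\gamma_{n_k}$ ending at $y_{n_k}$, the lift of $\delta$ also ends at $y_{n_k}$. Projecting via the $f_j$'s and letting $n_k$ exhaust all indices, the lift of $\delta$ to $X_n$ from $x_n$ ends at $y_n$ for every $n$; these compatible lifts assemble to a leaf path from $x$ to $y$. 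The subtle step is this converse — controlling which preimage in the fiber the lift actually hits — and it is precisely where compactness of $X_1$ enters, through the positive injectivity radius and the positive systole.
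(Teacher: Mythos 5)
Your proof is correct, and the first two assertions (Cantor set, density) are handled in essentially the same spirit as the paper, though more directly: you get perfection immediately from $d_m\geq 2$, whereas the paper derives it as a byproduct of the density argument; and your density argument lifts a single path straight to a leaf path, rather than splitting it into the two steps the paper uses (first showing every leaf meets every fiber of $\pi_1$, then showing leaves approximate each other over a fixed fiber).

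The converse of the leaf characterization is where you genuinely diverge from the paper. The paper projects the bounded-length paths $\gamma_n$ down to $X_1$ and invokes the \emph{finiteness of homotopy classes of paths of length $\leq R$} in the compact manifold $X_1$ (a discrete/$\pi_1$ argument): some class recurs, and lifting any representative of that class reconnects $x$ and $y$ in $X_\infty$. You instead project down, extract a uniformly convergent subsequence by Arzel\`a--Ascoli (this needs the $\delta_n$ to be parametrized with uniformly bounded speed, which you should state but which is easy to arrange), and then compare the limit $\delta$ to nearby $\delta_{n_k}$ via short geodesic homotopies, using the injectivity radius and systole of $X_1$ to pin down which point of the fiber the lift hits. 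Both routes hinge on compactness of $X_1$ and both are valid; yours is more metric-geometric and avoids any explicit bookkeeping with $\pi_1(X_1)$, at the cost of the Arzel\`a--Ascoli and systole apparatus. One small inefficiency: since your short-geodesic homotopy is already rel endpoints (both curves start at $x_1$ and end at $y_1$), the lifted endpoints agree \emph{exactly} by uniqueness of lifting of homotopies; the subsequent ``distinct fiber points are at least the systole apart'' step is superfluous, though harmless.
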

\begin{proof}
Let $U_{1}$ be simply connected open subset of $X_{1}$ and let $U_{m}\subset X_{m}$
be connected such that $f_{1}\circ\cdot\cdot\cdot\circ f_{m-1}(U_{m})=U_{1}$. 
By using the basis for the topology we see that 
$\pi_{1}^{-1}(U_{1})\setminus U^m_{\infty}$ is open, and so the topological 
space of plaques over $U_{1}$ is totally disconnected.  In particular each transversal 
is totally disconnected.  That each transversal is perfect will follow from the proof 
that each leaf is dense. 

Let $x_{1}\in X_{1}$ be a point.  We first show that any leaf contains 
a point that projects to $x_{1}$.  Let $\{x_{n}'\}\in X_{\infty}$.
Choose an arc $\gamma$ between $x_{1}'$ and $x_{1}$.  For each $n\in\mathbb N$
the lifting of $\gamma$ with initial point $x_{n}'$ determines a lifting of $x_{1}$
to a point $x_{n}$, \emph{i.e.}, we obtain a point $\{x_{n}\}$ 
in the leaf through $\{x_{n}'\}$.
Clearly $\gamma$ lifts to a path between $\{x_{n}\}$  and $\{x_{n}'\}$.  

Let $\{x_{n}\}$ and $\{x_{n}'\}$ be 
two points with $x_{1}=x_{1}'$, and let $L$ and $L'$ denote their leaves 
respectively.  We will show that there are points $\{x_{n}''\}$
in $L'$ with $x_{1}''=x_{1}$ arbitrarily close to $\{x_{n}\}$. 

Unless the two points coincide there exists an integer $k$ such that $x_{k}\neq x_{k}'$.
 Choose an arc $\gamma$ between $x_{k}'$
and $x_{k}$.  Let $x_{n}''=x_{n}$ for $n=1,...,k-1$.  For $n\geq k$ let 
$x_{n}''$ be the lifting to $X_{n}$ determined by the lifting of $\gamma$
with initial point $x_{n}'$.  Then $\{x_{n}''\}$  
is in the same leaf as $\{x_{n}'\}$, and the bigger $k$ is, 
the closer the two points are together.       

Now assume that $\{x_{n}\}$ and $\{x_{n}'\}$ are in the same leaf.  Then there 
is a path $\tilde\gamma$ in the leaf connecting them.  
Then $\gamma_{n}:=\pi_{n}(\tilde\gamma)$
is a path in $X_{n}$ connecting $x_{n}$ and $x_{n}'$, and $d_{n}(x_{n},x_{n}')$
is less than the length of $\gamma_{n}$.  The length of $\gamma_{n}$ is the same for all $n$. 

If on the other hand $d_{n}(x_{n},x_{n}')<R\in\mathbb R^+$ for all $n$ there 
is a path $\gamma_{n}$ of length less than R connecting them in $X_{n}$ for all $n$. 
Consider the collection of projections 
$f_{1}\circ\cdot\cdot\cdot\circ f_{n-1}(\gamma_{n})$.  
These are representatives of elements 
of $\pi_{1}(X_{1},x_{1})$ all of length less than $R$ and so there are only a finite number 
of classes.  So we may choose a representative $\gamma_{1}$ whose class occur infinitely many times. 
But then $x_{n}'$ is the point in $X_{n}$ determined by the lift of $\gamma_{1}$ with 
initial point $x_{n}$, so the curve lifts to $X_{\infty}$ and connects the two points.  
\end{proof}

\begin{thm}\label{current}
Let $X_{\infty}$ be a projective limit of dimension k.  
Then $X_{\infty}$ supports a positive closed (k,k)-current, 
and any $\partial\overline\partial$-closed (k,k)-current $T$ of order $0$ on $X_{\infty}$  is closed.  It is uniquely determined by the values $\langle (\pi_1)_*T,\omega\rangle$ where $\omega$ is a fixed volume form on $X_1.$ 
The support of $T$ is $X_{\infty}$.  
\end{thm}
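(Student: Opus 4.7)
The plan is to exhibit a canonical positive closed $(k,k)$-current $T_{0}$ built from normalized volume forms on the tower $\{X_{n}\}$, and then to show that every positive $\partial\overline{\partial}$-closed $(k,k)$-current of order $0$ is a real scalar multiple of $T_{0}$. For the construction, fix a positive volume form $\omega$ on $X_{1}$ and set $\omega_{n}:=h_{n}^{*}\omega/(d_{1}\cdots d_{n-1})$. Since $f_{n}$ is a degree-$d_{n}$ covering with $h_{n+1}^{*}\omega=f_{n}^{*}h_{n}^{*}\omega$, one checks $\omega_{n+1}=f_{n}^{*}\omega_{n}/d_{n}$ and $(f_{n})_{*}\omega_{n+1}=\omega_{n}$. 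Kolmogorov's extension theorem produces a unique finite Borel measure $T_{0}$ on $X_{\infty}$ with $(\pi_{n})_{*}T_{0}=\omega_{n}$ for all $n$. Equivalently, in any flow box $U$ over a simply connected $U_{1}\subset X_{1}$, the transversal measure $\nu_{U}$ on the Cantor set $T_{U}$ that assigns mass $1/(d_{1}\cdots d_{n-1})$ to each level-$n$ cylinder $\pi_{n}^{-1}(y)\cap T_{U}$, $y\in h_{n}^{-1}(x_{1})$, is holonomy-invariant (the counting measures on the finite fibers $h_{n}^{-1}(x_{1})$ are preserved by the monodromy action of $\pi_{1}(X_{1})$), so $T_{0}=\int_{T_{U}}[P_{t}]\,d\nu_{U}(t)$ is a globally defined $(k,k)$-current of order $0$. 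Closedness is immediate from Stokes applied to test forms compactly supported in the flow box.

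For the uniqueness and closedness statements, let $T$ be a positive $\partial\overline{\partial}$-closed $(k,k)$-current of order $0$. By Proposition \ref{local}, in each flow box over $U_{1}\subset X_{1}$ we have $T=\int_{T_{U}} h_{t}[P_{t}]\,d\mu_{U}(t)$ with $h_{t}$ positive pluriharmonic on $P_{t}\cong U_{1}$ (identifying plaque and base via $\pi_{1}$). Using that $\pi_{1}|_{P_{t}}$ is biholomorphic, direct computation gives $(\pi_{1})_{*}T|_{U_{1}}=H\cdot\omega$ with $H(z):=\int_{T_{U}} h_{t}(z)\,d\mu_{U}(t)$ positive pluriharmonic on $U_{1}$. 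Two such flow-box descriptions of the intrinsic current $(\pi_{1})_{*}T$ must agree on overlaps, so the local $H$'s glue into a positive pluriharmonic function on the compact manifold $X_{1}$, which by the maximum principle is a constant $c_{1}\geq 0$; hence $(\pi_{1})_{*}T=c_{1}\omega$. The same analysis performed on the open sets $h_{n}^{-1}(U_{1})\subset X_{n}$ (using that $h_{n}$ is holomorphic, so pluriharmonic densities on plaques descend to pluriharmonic densities on the sheets $S_{y}\subset X_{n}$), combined with the consistency $(f_{n})_{*}(\pi_{n+1})_{*}T=(\pi_{n})_{*}T$ and $(f_{n})_{*}dV_{n+1}=d_{n}\,dV_{n}$, yields $(\pi_{n})_{*}T=c_{1}\omega_{n}$ for every $n$. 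Kolmogorov uniqueness of the inverse-limit measure then forces $T=c_{1}T_{0}$. In particular $T$ is closed and is determined by the single number $c_{1}=\langle(\pi_{1})_{*}T,\omega\rangle/\int_{X_{1}}\omega$.

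For the support, any non-empty basic open set in $X_{\infty}$ is of the form $U^{m}_{\infty}=\pi_{m}^{-1}(U^{m})$ for some non-empty open $U^{m}\subset X_{m}$, and $T_{0}(U^{m}_{\infty})=\omega_{m}(U^{m})>0$. Hence $\mathrm{supp}\,T_{0}=X_{\infty}$, and so every $T=c_{1}T_{0}$ with $c_{1}>0$ has full support. The crux of the argument, and the only step that is not mechanical, is the passage from local to global pluriharmonicity of the density on the compact $X_{n}$: one must verify that the positive pluriharmonic densities produced from different flow-box representations patch into a single continuous (hence smooth pluriharmonic) function on $X_{n}$, so that the maximum principle trivializes the transverse structure. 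This relies on the intrinsic definition of $(\pi_{n})_{*}T$ and on the holomorphicity of $h_{n}$, which converts plaque-wise pluriharmonicity into pluriharmonicity on $X_{n}$ itself.
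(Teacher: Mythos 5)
Your proof is correct but takes a genuinely different route from the paper's in both halves. For existence, the paper defines the canonical current directly on the dense subspace $\Omega=\bigcup_n\pi_n^*\mathcal A^{(k,k)}(X_n)$ as the projective limit of the normalized integration currents $\tilde T_n$ and then extends by continuity via Proposition~\ref{approx1}; you instead build the Ruelle--Sullivan current from the holonomy-invariant transverse measure assigning mass $(d_1\cdots d_{n-1})^{-1}$ to each level-$n$ cylinder. Both yield the same current; your construction is more geometric and makes positivity and full support immediate, while the paper's extension-by-density argument is more uniform with its uniqueness proof. For uniqueness, both arguments reduce to the same fact — $(\pi_n)_*T$ is a $\partial\overline\partial$-closed order-zero $(0,0)$-current on compact connected $X_n$, hence constant — but the paper obtains $\partial\overline\partial(\pi_n)_*T=0$ in one stroke from the commutation of pushforward with $\partial\overline\partial$ (since $\pi_n$ is holomorphic on leaves), whereas you route through the local disintegration of Proposition~\ref{local}, which is valid but longer and implicitly restricts $T$ to be positive (the paper's ``normalized'' does the same; the direct commutation argument would remove the restriction). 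Finally, where you invoke ``Kolmogorov uniqueness'' to conclude $T=c_1T_0$, the paper simply cites the density of $\Omega$ from Proposition~\ref{approx1}; these are the same content, but your version would benefit from spelling out the passage from the $(k,k)$-current to a measure (in a flow box over $U_1$, pair $T$ against $\phi\cdot\omega$ for $\phi\in\mathcal C_c(U_1\times T_U)$ to obtain a measure on $U_1\times T_U$; its pushforwards under $\mathrm{id}\times(\text{level-}n\text{ quotient})$ are what your argument controls, and they determine the measure since cylinders generate the Borel $\sigma$-algebra of the Cantor transversal).
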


The proof of the theorem depends on an approximation result 
that we will prove first.  See also the thesis of Deroin \cite{D2003}.

\begin{prop}\label{approx1}
Let $\Omega\subset\mathcal{A}^{(k,k)}(X_{\infty})$  
denote the space of forms $\underset{n\in\mathbb N}{\bigcup}\pi_{n}^*
(\mathcal{A}^{(k,k)}(X_{n}))$.  Then $\Omega$ is dense in $\mathcal{A}^{(k,k)}(X_{\infty})$.
\end{prop}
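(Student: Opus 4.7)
The strategy is to localize via a partition of unity pulled back from $X_1$, then exploit the Cantor-set structure of the transversal in each flow box to realize forms as pullbacks from a sufficiently deep level $X_n$.

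First, I would cover $X_1$ by finitely many simply connected open sets $V_i$ with smooth partition of unity $\{\tilde\rho_i\}$ subordinate to this cover, $\mathrm{supp}\,\tilde\rho_i \Subset V_i$. The functions $\rho_i := \pi_1^*\tilde\rho_i$ are partially smooth on $X_\infty$ (smooth on each leaf, since $\pi_1$ is a holomorphic covering on leaves, and depending only on the leaf coordinate in a flow box) and form a partition of unity subordinate to the flow-box cover $\{\pi_1^{-1}(V_i)\}$. Writing $\varphi = \sum_i \rho_i\varphi$ reduces the task to approximating each $\rho_i\varphi$ separately, so henceforth assume $\varphi$ is supported in a single flow box $U = \pi_1^{-1}(V)$ with $V\subset X_1$ simply connected.

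Second, I would analyze this flow box. Because $V$ is simply connected, for every $n$ the preimage $h_n^{-1}(V)\subset X_n$ decomposes as a disjoint union of sheets $V_s$ indexed by $s\in h_n^{-1}(p)$ (for a chosen $p\in V$), each mapping biholomorphically onto $V$ via $h_n$. The transversal $T := \pi_1^{-1}(p)$ is the inverse limit of the finite sets $h_n^{-1}(p)$, hence a Cantor set, and in the chart $U\cong V\times T$ the form $\varphi$ becomes a continuous family $\{\varphi_t\}_{t\in T}$ of smooth $(k,k)$-forms on $V$ each supported in a fixed compact subset. Since $T$ is compact and the $C^\infty$-topology on plaques is Fr\'echet (hence uniformizable), the map $t\mapsto\varphi_t$ is uniformly continuous: for every $C^\infty$-seminorm $\|\cdot\|_K$ and every $\varepsilon>0$ there exists $n$ such that $\|\varphi_t-\varphi_{t'}\|_K<\varepsilon$ whenever $\pi_n(t)=\pi_n(t')$.

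Third, I would produce $\psi\in\mathcal A^{(k,k)}(X_n)$ realizing the approximation. For each $s\in h_n^{-1}(p)$ pick a representative $t_s\in T$ with $\pi_n(t_s)=s$, and set $\psi|_{V_s} := (h_n|_{V_s})^*\varphi_{t_s}$; since $\varphi_{t_s}$ has compact support in $V$, this extends by zero to a smooth form on $X_n$. A direct computation in the flow-box chart shows that on the plaque labeled by $t$, the pullback $\pi_n^*\psi$ equals $\varphi_{t_s}$ with $s=\pi_n(t)$, so uniform continuity gives $\|\pi_n^*\psi-\varphi\|_K<\varepsilon$ on $U$. To handle all pieces of the partition at once, take $N=\max_i n_i$, pull each $\psi_i$ back to $X_N$ via $f_{n_i}\circ\cdots\circ f_{N-1}$, and sum; the resulting form on $X_N$ pulls back to an element of $\Omega$ approximating $\varphi$. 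The main subtlety I anticipate is verifying that $\pi_n^*\psi$ really is locally constant in $t$ at level $n$, which reduces to the observation that the local section $V\to V_s\subset X_n$ determining a plaque depends only on $\pi_n(t)$; granted this, uniform continuity does all the work.
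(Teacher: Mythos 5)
Your proof is correct and follows essentially the same route as the paper's: reduce via a partition of unity pulled back from $X_1$ to a single flow box, use compactness/uniform continuity of the form in the transverse variable to find a level $n$ where the form is nearly constant on $\pi_n$-fibers, and then define the approximating form on $X_n$ by picking one representative leaf over each sheet. The only differences are cosmetic (you work with simply connected $V_i$ and the forms directly, the paper uses ball charts and reduces to approximating the coefficient function; you spell out the Fréchet uniform-continuity argument and the final pullback to a common level $N$, which the paper leaves implicit).
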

\begin{proof}
Let $\alpha\in\mathcal{A}^{(k,k)}(X_{\infty})$.  
Since $X_{1}$ is compact we may cover it by a finite number of 
coordinate charts $\mathcal B=\{B_{j}\}_{j=1}^s$ biholomorphic to balls in 
$\mathbb C^k$.
Let $\{\beta_{j}\}$ be a partition of unity with respect to $\mathcal B$.  Then 
$$
\alpha=\sum_{j=1}^\ell \alpha_{j}:=\sum_{j=1}^\ell(\beta_{j}\circ\pi_{1})\cdot\alpha.
$$ 
We will show that we can approximate each $\alpha_{j}$ arbitrarily well.  \

Choose coordinates $\phi_{j}:\pi_{1}^{-1}(B_{j})\rightarrow B\times T^j_{B}$. 
In these coordinates we have that $\alpha_{j}$ is given by 
$\alpha_{j}(z,t)=f_{j}(z,t)\mathrm{d}z\wedge\mathrm{d}\overline z=
f_{j}(z,t)dz_{1}\wedge\cdot\cdot\cdot\wedge dz_{k}\wedge d\overline{z_{1}}
\wedge\cdot\cdot\cdot\wedge d\overline{z_{k}}$.
So it is enough to show that we may approximate the function $g_{j}=f_{j}\circ\phi$
on $\pi_{1}^{-1}(B_{j})$.  \

Let $\epsilon>0$.  A compactness argument shows that there exists an $m\in\mathbb N$
such that for all $\{x_{n}\}$ and $\{x'_{n}\}$
with $x_{n}=x'_{n}$ for $n=1,...,m$ and $x_{1}\in B_{j}$, we have that 
$|g_{j}(\{x_{n}\})-g_{j}(\{x'_{n}\})|<\epsilon$ 
and similarly for any given number of derivatives.
Let $V_{1},...,V_{\Pi_{j=1}^{m-1}d_{j}}$ denote the disjoint preimages of $B_{j}$ in $X_{m}$.
Define a function $\tilde g_{j}$ on $V_{s}$ as follows: choose any leaf
$L_{s}$ in $\pi_{m}^{-1}(V_{s})$ and define $\tilde g_{j}=g_{j}\circ\pi_{m}^{-1}$.
Then $|\tilde g_{j}\circ\pi_{m}(x)-g_{j}(x)|<\epsilon$ for all $x\in X_{\infty}$.
\end{proof}

\emph{Proof of Theorem \ref{current}:}
For each $n\geq 1$ let $T_{n}$ denote the current of integration on $X_{n}$.
With respect to some volume form $\omega_{1}$ on $X_{1}$ the current 
$T_{1}$ has total mass one.  Next let $\tilde T_{1}:=T_{1}$ and for each $n\geq 2$
let $\tilde T_{n}:=\frac{1}{\Pi_{j=1}^{n-1}d_{j}}T_{n}$.  Then $T_{n}$
has total mass one with respect to the volume form $\omega_{n}$ defined 
inductively by $\omega_{n}:=f_{n-1}^*\omega_{n-1}$ and for any (k,k)-form 
$\alpha$ on $X_{n-1}$ we have that $\tilde T_{n}(f_{n}^*\alpha)=\tilde 
T_{n-1}(\alpha)$.  We define the current $T$ on $\Omega$ as follows.  For 
an element $\alpha=\pi_{n}^*\alpha_{n}$ let $T(\alpha)=\tilde T_{n}(\alpha_{n})$.
This current is closed and of order zero on $\Omega$ and extends to a closed 
current of order zero 
on $\overline\Omega$ which according to Proposition \ref{approx1} is 
$\mathcal A^{(k,k)}(X_{\infty})$.  \

Assume next that $T'$ is a normalized (k,k)-current of order zero on $X_{\infty}$ such 
that $\partial\overline\partial T=0$.  Then $T'_{n}:=(\pi_{n})_{*}T$
satisfies $\partial\overline\partial T'_{n}=0$ for each $n$ and so 
$\mathrm{d}T'_{n}=0$ for each $n$.  It follows that $T_{n}'=T_{n}$ and so $T'=T$.
Since all leaves are dense it follows that $T$ has mass everywhere.   
$\hfill\square$

\medskip

\begin{prop}\label{approx2}
Let $X_{\infty}$ be a projective lamination, and let $U$ be 
a chart with $\phi_{U}:U\rightarrow B\times T_{U}$. 
Let $\varphi:X_{\infty}|_{U}\rightarrow Y$ be an embedding 
to a complex manifold $Y$.  Then any compactly supported (k,k)-form
$\omega$ on $U$ is the uniform limit of forms $\varphi^*(\omega_{j})$, 
where the $\omega_{j}$'s are - in the usual sense - smooth (k,k)-forms 
on $Y$.  
\end{prop}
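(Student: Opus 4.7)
The plan is to use a partition of unity on $Y$ to reduce to a single coordinate chart, then build the approximation by Tietze-type extension followed by mollification. Since $\varphi$ is a holomorphic embedding of a $k$-dimensional lamination, its leafwise differential has full rank $k$ at every point, so at each $p\in\varphi(\supp\omega)$ I can select local holomorphic coordinates $(w_1,\dots,w_n)$ on a neighborhood $V_p\subset Y$ and a multi-index $I$ of size $k$ such that the Jacobian $J_I(z,t):=\det(\partial w_I/\partial z)(\varphi(z,t))$ is non-vanishing on $\varphi^{-1}(V_p)$. By compactness of $\supp\omega$, I cover $\varphi(\supp\omega)$ by finitely many such charts $\{V_s\}$, take a smooth partition of unity $\{\chi_s\}$ on $Y$ subordinate to $\{V_s\}$ with $\sum_s\chi_s\equiv 1$ near $\varphi(\supp\omega)$, and split $\omega=\sum_s(\chi_s\circ\varphi)\omega$. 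Approximating each summand separately reduces matters to the case that $\omega$ is supported inside $\varphi^{-1}(V)$ for a single chart $V$ with a distinguished non-vanishing Jacobian $J_I$.

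In this reduced setting write $\omega=f(z,t)\,dz\wedge d\bar z$ and define $F$ on $\varphi(\supp\omega)$ by $F\circ\varphi=f/|J_I|^2$. Because $\varphi$ is a homeomorphism onto its image and $|J_I|$ is bounded below on the compact set $\varphi(\supp\omega)$, this $F$ is continuous; and because $f$ vanishes along $\partial\supp\omega$ (any continuous function vanishes on the boundary of its own support), $F$ vanishes along $\partial\varphi(\supp\omega)$. Extending by zero then produces a continuous, compactly supported function $\hat F$ on $V$. Mollifying in the $w$-coordinates yields smooth approximants $F_j\to\hat F$ uniformly on $V$, with a common compact support. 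Setting $\omega_j:=F_j\,dw_I\wedge d\bar w_I$ (extended by zero to all of $Y$), the pullback
\[
\varphi^*\omega_j=(F_j\circ\varphi)\,|J_I|^2\,dz\wedge d\bar z
\]
converges uniformly to $\omega$ on $\supp\omega$; off $\supp\omega$, injectivity of $\varphi$ forces $\hat F\circ\varphi\equiv 0$, so $F_j\circ\varphi\to 0$ uniformly, and $|J_I|$ remains bounded on the compact preimage of the common support, giving $\varphi^*\omega_j\to 0=\omega$ uniformly there.

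The step I expect to be the main obstacle is guaranteeing that the zero-extension $\hat F$ is genuinely continuous on $V$, since otherwise the mollification will not produce a uniform approximation. This hinges on the boundary vanishing of $f$ on $\partial\supp\omega$ together with the uniform lower bound on $|J_I|$ supplied by the choice of chart. Once continuity of $\hat F$ is secured, the rest—mollification of a continuous function in ambient coordinates and pullback of the resulting smooth forms—is routine, and summing over the partition of unity produces the desired global $\omega_j$ on $Y$.
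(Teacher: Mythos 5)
Your construction fails at exactly the step you flag as the main obstacle, and the failure is not repairable along the same lines. The problem is that $\varphi(\supp\omega)$ is a \emph{laminated} set inside $V$: in transverse directions it is a Cantor set, so as a subset of $V$ it has empty interior and its topological boundary in $V$ is (generically) all of $\varphi(\supp\omega)$, not merely the image of the relative boundary $\partial\supp\omega$. Your observation that ``any continuous function vanishes on the boundary of its own support'' gives vanishing of $f$ on $\partial\supp\omega$, hence of $F$ on $\varphi(\partial\supp\omega)$; but what you actually need for the zero-extension $\hat F$ to be continuous on $V$ is vanishing of $F$ on $\partial_{V}\bigl(\varphi(\supp\omega)\bigr)$, which is a far larger set. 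Since $\varphi$ is only a homeomorphism onto its image (not onto an open set), those two notions of boundary are different, and at an interior point of $\supp\omega$ the value $F(\varphi(z,t))$ is typically nonzero while $\hat F\equiv 0$ at points of $V$ arbitrarily nearby in directions transverse to the leaf. So $\hat F$ is discontinuous along essentially all of $\varphi(\supp\omega)$, and the mollifications $F_{j}$ cannot converge to $F$ uniformly there. Replacing the zero-extension by a Tietze extension fixes continuity but breaks the other half of the argument: you then lose control off $\supp\omega$, where nearby plaques pass through the region where the extension is no longer small, so $\varphi^{*}\omega_{j}$ need not tend to $0=\omega$ there.

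This is why the paper's one-line proof invokes that ``locally the collection of embedded plaques is totally disconnected,'' and refers back to the proof of Proposition~\ref{approx1}. The intended argument is not mollification of a global extension but a combinatorial one: using uniform continuity of $f$ and the total disconnectedness of $T_{U}$, partition $T_{U}$ into finitely many clopen sets $T_{1},\dots,T_{m}$ on each of which $f(z,\cdot)$ oscillates by at most $\epsilon$, choose a representative plaque $\varphi(B\times\{t_{i}\})$ in each $\varphi(\overline B\times T_{i})$, and separate the compact sets $\varphi(\overline B\times T_{i})$ by disjoint open sets $W_{i}\subset Y$. On each $W_{i}$ you then extend the leafwise coefficient from the single smooth complex submanifold $\varphi(B\times\{t_{i}\})$ (where extension \emph{is} elementary, using a tubular neighborhood and cutoff) to a smooth compactly supported $(k,k)$-form; pulling back and summing gives the desired approximation. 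The point is that a single plaque behaves like an honest submanifold for extension purposes, whereas the whole laminated set does not, and the clopen splitting is what lets you reduce to the single-plaque case. Your partition-of-unity reduction on $Y$ and the $dw_{I}\wedge d\bar w_{I}$ device are fine, but they do not address this core difficulty.
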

\begin{proof}
The proof is essentially the same as that of Proposition \ref{approx1} since locally 
the collection of embedded plaques is totally disconnected. 
\end{proof}
\begin{cor}
Let $X_{\infty}$ be a projective lamination of dimension $k$ 
and let $\phi:X_{\infty}\rightarrow Y$
be an embedding in a complex manifold $Y$.  There is 
one to one correspondence between (k,k)-currents of order 
zero on $X_{\infty}$ and (k,k)-currents of order zero on 
$Y$ which are weakly directed by $\phi(X_{\infty})$.    
\end{cor}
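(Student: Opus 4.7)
The plan is to exhibit inverse maps $\Phi$ and $\Psi$ between the two spaces of currents, with Proposition \ref{approx2} as the central tool. In the forward direction, given a $(k,k)$-current $T$ of order zero on $X_{\infty}$, define $\Phi(T)$ on a smooth compactly supported $(k,k)$-form $\omega$ on $Y$ by
\[
\langle \Phi(T),\omega\rangle := \langle T,\phi^*\omega\rangle.
\]
Since $\phi$ is an embedding, $\phi^*\omega$ lies in $\mathcal{A}^{(k,k)}(X_{\infty})$, so this makes sense. The resulting current $\Phi(T)$ is of order zero on $Y$, supported in $\phi(X_{\infty})$, and depends only on the restriction of $\omega$ to $\phi(X_{\infty})$, hence is weakly directed.

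For the reverse direction, given a weakly directed $(k,k)$-current $S$ of order zero on $Y$, I define $\Psi(S)$ by extension-by-continuity. Using a partition of unity subordinate to lamination charts, it suffices to define $\Psi(S)$ on a form $\alpha \in \mathcal{A}^{(k,k)}(X_{\infty})$ compactly supported in a chart $U$. Invoke Proposition \ref{approx2} to choose smooth $(k,k)$-forms $\omega_j$ on $Y$ with $\phi^*\omega_j \to \alpha$ uniformly, and set
\[
\langle \Psi(S),\alpha\rangle := \lim_{j\to\infty} \langle S,\omega_j\rangle.
\]

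The heart of the argument is showing this limit exists and is independent of the approximating sequence. Both statements reduce to: if a sequence $\omega_j$ of smooth $(k,k)$-forms on $Y$ with uniformly bounded supports satisfies $\phi^*\omega_j \to 0$ uniformly, then $\langle S,\omega_j\rangle \to 0$. This is precisely what the combined hypotheses on $S$ provide. Weak directedness by $\phi(X_{\infty})$ means $\langle S,\omega\rangle$ depends only on the restriction of $\omega$ to the lamination, while order zero then gives that on this restricted space $S$ is bounded by the uniform norm on compact sets. I expect this step — pinning down the operative meaning of \emph{weakly directed} so the extension argument closes — to be the main technical point, since everything else is formal.

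Finally, one checks that $\Phi$ and $\Psi$ are mutual inverses. For $\Phi \circ \Psi$: given a smooth form $\omega$ on $Y$, the constant sequence $\omega_j = \omega$ approximates $\phi^*\omega$, hence $\langle \Phi\Psi(S),\omega\rangle = \langle\Psi(S),\phi^*\omega\rangle = \langle S,\omega\rangle$. For $\Psi \circ \Phi$: given $\alpha \in \mathcal{A}^{(k,k)}(X_{\infty})$ with approximating sequence $\phi^*\omega_j \to \alpha$, order zero of $T$ yields $\langle T,\phi^*\omega_j\rangle \to \langle T,\alpha\rangle$, so
\[
\langle\Psi\Phi(T),\alpha\rangle = \lim_{j\to\infty}\langle \Phi(T),\omega_j\rangle = \lim_{j\to\infty}\langle T,\phi^*\omega_j\rangle = \langle T,\alpha\rangle.
\]
This establishes the claimed bijection.
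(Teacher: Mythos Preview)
Your proposal is correct and is exactly the natural expansion of what the paper leaves implicit: the paper states the Corollary without proof, treating it as an immediate consequence of Proposition~\ref{approx2}, and your construction of the two maps $\Phi$ and $\Psi$ via pullback and extension-by-continuity is the intended argument. The one point worth making explicit is that the paper's definition of \emph{weakly directed} ($T\wedge\alpha=0$ for every continuous $1$-form $\alpha$ vanishing on plaques) does yield your claim that $\langle S,\omega\rangle$ depends only on the restriction of $\omega$ to the plaques, since any $(k,k)$-form on $Y$ whose restriction to each plaque vanishes can locally be written as a sum of terms $\alpha\wedge\beta$ with $\alpha$ such a $1$-form; you correctly flagged this as the step requiring care.
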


Recall that a current $T$ is weakly directed by a lamination $\mathcal L$ if and only 
if $T \wedge \alpha=0$ for every continuous one form $\alpha$, 
vanishing on the plaques of $\mathcal L$, 
\emph{i.e.} $\alpha \wedge [L]=0$ for every plaque $L.$

\subsubsection{Embeddings in $\mathbb{CP}^N$}

We will now consider embeddings of projective limits in projective space.  
By an embedding 
$\Phi:X_{\infty}\hookrightarrow\mathbb{CP}^N$ we will mean 
a topological embedding, \emph{i.e.}, $\Phi$ is a homeomorphism onto its image, which 
is holomorphic and of maximal rank along leaves.  

\begin{thm}
Let $X_{1}$ be a complex projective manifold of dimension $n$ 
and let $X_{j+1}\overset{f_{j}}{\rightarrow}{X_{j}}$ 
be a projective limit over $X_{1}$.
Then $X_{\infty}$ admits an embedding into $\mathbb{CP}^{2n+1}$.
\end{thm}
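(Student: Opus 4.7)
\emph{Plan.} The strategy is two-stage: first build a leafwise-holomorphic topological embedding $\Phi_0: X_\infty \hookrightarrow \mathbb{CP}^M$ for some sufficiently large $M$, then reduce the ambient dimension to $2n+1$ by a generic linear projection.

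The reduction step is a Whitney/Chow-style dimension count adapted to the laminated setting. The image $\Phi_0(X_\infty)$ has real dimension $2n$: the leaves contribute $2n$ (being of complex dimension $n$), while the Cantor transversal is topologically zero-dimensional. Hence its secant variety has complex dimension at most $2n+1$ and its leafwise tangent variety has complex dimension at most $2n$. A generic linear subspace $C \subset \mathbb{CP}^M$ of complex codimension $2n+2$ avoids both varieties, so the induced projection $\pi_C: \mathbb{CP}^M \dashrightarrow \mathbb{CP}^{2n+1}$ is well-defined and injective on $\Phi_0(X_\infty)$ and preserves maximal leafwise rank.

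For the initial embedding, note that each $X_j$ is a finite étale cover of the projective manifold $X_1$, hence itself projective (pullback of an ample line bundle by a finite surjective morphism is ample). Fix an ample line bundle $A$ on $X_1$, set $L_m := h_m^* A$, and for large $k$ take Kodaira embeddings $\iota_m: X_m \hookrightarrow \mathbb{CP}^{N_m}$ via sections of $L_m^{\otimes k}$. The pulled-back maps $\Phi_m = \iota_m \circ \pi_m: X_\infty \to \mathbb{CP}^{N_m}$ are leafwise holomorphic of maximal rank but only separate points of $X_\infty$ up to depth $m$. To achieve full injectivity I would combine finitely many leafwise-holomorphic functions of the form $f = \sum_{m \geq 1} \lambda_m g_m$, where each $g_m$ is a local pullback from level $m$ and the weights $\lambda_m$ shrink rapidly enough for uniform convergence. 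Such a limit is continuous and remains leafwise holomorphic by the Cauchy formula on plaques, and a Baire category argument in the parameter space of weights and underlying sections shows that a generic $(M+1)$-tuple of such series simultaneously separates points of $X_\infty$, gives a leafwise immersion, and has no common zeros, producing $\Phi_0$.

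The main obstacle is precisely this construction of $\Phi_0$. No finite combination of pullbacks $\pi_m^*$ from individual levels can separate all Cantor-transversal points, so one is forced to work with honest infinite-depth leafwise-holomorphic functions. The key technical verifications are that uniform limits preserve leafwise holomorphy and that the countably many open-dense conditions corresponding to pairwise separation and leafwise immersion (reduced to countably many by compactness of $X_\infty$) can be satisfied simultaneously by a generic choice of parameters. Once $\Phi_0$ is in hand the generic projection step is routine and mirrors the classical proof of Whitney's theorem embedding $n$-dimensional complex manifolds into $\mathbb{CP}^{2n+1}$.
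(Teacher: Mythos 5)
The paper does not follow your two-stage strategy; instead it performs the dimension reduction at each finite level $X_m$ (which is a projective algebraic variety) and constructs the map $\Phi:X_\infty\to\mathbb{CP}^{2n+1}$ as a limit of a sequence of level-wise embeddings $\varphi_m:X_m\to\mathbb{CP}^{2n+1}$ satisfying $\|\varphi_{m+1}-\varphi_m\circ f_m\|<2^{-(m+1)}\epsilon_m/3$. Injectivity of $\Phi$ then follows from a quantitative $\epsilon$-bookkeeping argument, not from a generic projection applied to $X_\infty$ itself. Your plan differs at both stages, and both contain genuine gaps.

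\emph{The final projection step is not routine.} You assert that because $\Phi_0(X_\infty)$ has topological dimension $2n$, its secant variety has (complex) dimension at most $2n+1$, so a generic center of projection works. This is the step that fails. The measure-theoretic genericity argument for projections (the heart of Whitney/Chow-type theorems) requires an upper bound on Hausdorff dimension, or an algebraic structure, not merely a bound on covering dimension. An embedded Cantor transversal is topologically $0$-dimensional but can easily have positive Hausdorff dimension, which inflates the Hausdorff dimension of $K=\Phi_0(X_\infty)$ above $2n$ and makes the union of secant lines a set of full measure. The paper avoids this trap by applying its projection Lemma only to the algebraic images $g(X_m)\subset\mathbb{CP}^N$, where the secant set $Z\subset\mathbb P^N\times\mathbb P^N\times\mathbb P^N$ is a $(2n+1)$-dimensional algebraic set and the classical argument is valid. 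To salvage your approach you would at minimum need to construct $\Phi_0$ so that the transversal has Hausdorff dimension $0$ (or $<1$), which is a genuine additional constraint you have not imposed.

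\emph{The construction of $\Phi_0$ is underspecified in a way that the paper's $L^2$ machinery is designed to fix.} You take $L_m=h_m^*A$ and invoke Kodaira for each $X_m$ with a fixed power $k$. But very-ampleness does not pull back under finite covers: ampleness is preserved, so some power of $L_m$ is very ample, but that power may grow with $m$. The paper avoids this by using Demailly's $L^2$ estimates and singular metrics with prescribed Lelong numbers (its Theorem \ref{embedding}): these produce, for \emph{every} unbranched cover $\tilde X\to X_1$, embedding sections of the fixed bundle $\tilde f^*V\otimes\tilde f^*L$, with no $m$-dependent twist. This uniformity is precisely what lets the paper build the sequence $\varphi_m$ entirely out of sections of $f(m)^*V\otimes f(m)^*L$ and carry out the approximation scheme. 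Your infinite-series-plus-Baire proposal for $\Phi_0$ is plausible in outline, but until the uniform-very-ampleness issue is addressed, the building blocks $g_m$ you want to combine do not even exist uniformly, and the leafwise immersion claim (open condition preserved under small perturbation) cannot be started.
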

\begin{proof} 

To prove this result we will use $L^2$-techniques for embedding projective manifolds, 
or, equivalently, manifolds admitting positive line bundles.  We will refer to Demailly \cite{De}, 
and we start by recalling some notions and results (from \cite{De}) and we describe a technique for producing embeddings. 

Let $X$ be a compact complex manifold and let $L\rightarrow X$ be a complex line bundle.  
A singular metric on $L$ is a metric which is given in any trivialization $\Theta:L_{\Omega}\rightarrow\Omega\times\mathbb C$ by
$$
\|\xi\|=|\Theta(\xi)|\mathrm{e}^{-\varphi(x)}, x\in\Omega, \xi\in L_{x},
$$
where $\varphi\in L^1_{\mathrm{loc}}(\Omega)$ is an arbitrary function.  The curvature form 
of $L$ is given by the closed (1,1)-current $c(L):=\frac{i}{\pi}\partial\overline\partial\varphi$.  
We will consider line bundles with singular metrics and with positive curvature.  The Lelong number of $\varphi$
at $x$ is given by 
$$
\nu(\varphi,x)=\underset{z\rightarrow x}{\mathrm{{lim} \ {inf}}}\frac{\varphi(z)}{\mathrm{log}|z-x|}.
$$
The first important thing to recall is that if $\varphi$ is a plurisubharmonic function on $X$ then \emph{$\mathrm{e}^{-2\varphi}$ is non integrable in 
a neighborhood of $x$ when $\nu(\varphi,x)\geq n$}.  

\begin{lemma}
Let $X$ be a compact complex manifold with a K\"{a}hler metric $\omega$.  
Let $L\rightarrow X$ be a line bundle with a strictly positive metric $\mathrm{e}^{-\varphi}$.
Then there exists a number $k\in\mathbb N$
such that for any 
two points $x_{1},x_{2}\in X$ there exists a singular metric $\tilde\varphi$ on $L^{\otimes k}$
such that $c(L^{\otimes k})\geq\omega$ and such that $\nu(\tilde\varphi,x_{j})=n+1$ for $j=1,2.$
\end{lemma}
\begin{proof}
Let $\{U_{j}\}_{j=1}^m$ be a holomorphic cover of $X$ with $f_{j}:\overline{U_{j}}\rightarrow\overline{2\mathbb B^n}$. 
We may assume that the sets $\{f_{j}^{-1}(\frac{1}{4}\overline{\mathbb B^n})\}$ cover $X$.   Let $\chi\in\mathcal C^\infty_{0}(2\mathbb B^n)$
be a smooth cut-off function with $\chi\equiv 1$ on $\frac{3}{2}\mathbb B^n$.  
Let $\psi\in\mathcal C^\infty_{0}(\frac{3}{4}\mathbb B^n)$
be a smooth cut-off function with $\psi\equiv 1$ in on $\frac{1}{2}\mathbb B^n$.  

For any point $x\in X$ choose an open set, say $U_{j}$, such that $\|f_{j}(x)\|\leq\frac{1}{4}$.
Write $a=f_{j}(x)$.  We define in local coordinates $\sigma(j,a)(z)=\chi(z)\cdot\mathrm{log}\|z-a\|^{n+1}$.
Note that $i\partial\overline\partial\sigma(j,a)$ is a positive current on $\mathbb B^n(a,1)$.   Consider 
the current $\tilde\sigma(j,a):=(1-\psi)\cdot i\partial\overline\partial\sigma(j,a)$.  This is a smooth (1,1)-form
and so by the strict positivity of $i\partial\overline\partial\varphi$ there exists an $s\in\mathbb N$
such that $(1-\psi)i\partial\overline\partial(s\cdot\varphi + \tilde\sigma(j,a))\geq(1-\psi)\omega$
and so that $i\partial\overline\partial s\varphi\geq\omega$.  

Now let $\gamma$ be a positive test form on $U_{j}$.  Then 

\bea
\langle i\partial\overline\partial(s\varphi+
\sigma(j,a)),\gamma\rangle &=&
\langle i\partial\overline\partial(s\varphi+
\sigma(j,a)),\psi\gamma+(1-\psi)\gamma\rangle \\
& \geq & \langle i\partial\overline\partial(s\varphi),\psi\gamma\rangle + \langle i\partial\overline\partial(s\varphi+
\sigma(j,a)),(1-\psi)\gamma\rangle\\
& \geq & \langle\psi\omega,\gamma\rangle + 
\langle(1-\psi)\omega,\gamma\rangle\\
& = & \omega(\gamma).
\eea
Finally note that, by compactness, the integer $s\in\mathbb N$ may be chosen independently of the point $a$, 
and by finiteness it can also we chosen independently of $j$.    
Fix such an $s$, define $k=2s$, and 
for each pair of points $x_{1}$ and $x_{2}$ let $a_{1}=f_{j}(x_{1})$ and $a_{2}=f_{k}(x_{2})$
for suitable maps $f_{j}$ and $f_{k}$, and use the metric 
$\mathrm{e}^{-(k\varphi+\sigma(j_{1},a_{1})+
\sigma(j_2,a_{2}))}$
on $L^{\otimes k}$. 
\end{proof}

We now draw the conclusion that will enable us to embed projective limits over projective manifolds into projective space. 
Let $X$ be a projective manifold.  We equip $X$ with the induced K\"{a}hler metric $\omega$ and a
strictly positive line bundle $L$.  By the above considerations we may assume
that 

\begin{itemize}
\item[a.] For any two points $x_{1}$ and $x_{2}$ in $X$ there exists a singular metric $\mathrm{e}^{-\varphi_{x_{1},x_{2}}}$
on $L$ such that $\nu(\varphi_{x_{1},x_{2}},x_{j})=n+1$ for $j=1,2,$ and $c(L)\geq\omega$.  
\end{itemize}
What is important to note for us is the fact that if 
$\tilde X\overset{\tilde f}{\longrightarrow} X$ is an unbranched covering map then   
\begin{itemize}
\item[b.] For any two points $\tilde x_{1}$ and $\tilde x_{2}$ in $\tilde X$ 
there exists a singular metric $\mathrm{e}^{-\varphi_{\tilde x_{1},\tilde x_{2}}}$
on $\tilde f^*(L)$ such that $\nu(\varphi_{\tilde x_{1},\tilde x_{2}},\tilde x_{j})=n+1$ for $j=1,2,$ and 
$c(\tilde f^*(L))\geq\tilde f^*(\omega)$.  
\end{itemize}
(simply pull the metric back from $X$).

For a given complex projective manifold $X_{1}$ we now fix such a "good" line bundle $L$ 
over it. 
Let $V$ denote the holomorphic vector bundle $V:=\bigwedge^{n,0}T^*X_{1}$ over $X_{1}$.
The embedding $\Phi:X_{\infty}\rightarrow\mathbb{CP}^{2n+1}$ will be constructed 
by an inductive procedure.  We will start by  
constructing an initial embedding $\varphi_{1}:X_{1}
\hookrightarrow\mathbb{CP}^{2n+1}$; this embedding will be 
given by sections $h_{0},...,h_{2n+1}$ in the bundle $V\otimes L$.  
For the inductive step we will assume that we are given an embedding $\varphi_{n}:$ 
$X_{n}\hookrightarrow\mathbb{CP}^{2n+1}$ given by sections 
in $f(n-1)^*V\otimes f(n-1)^*(L)$ and approximate the immersion $f_{n}\circ\varphi_{n}$
by sections in $f(n)^*V\otimes f(n)^*L$ (here $f(n)$ denotes the composition $f_{n}\circ\cdot\cdot\cdot\circ f_{1}$).
The proof of the existence of the initial embedding and the key part of the inductive step is furnished 
by the following theorem  and Lemma \ref{projection} below.    

\begin{thm}\label{embedding}
Let $\tilde X\overset{\tilde f}{\longrightarrow} X$ be an unbranched covering of $X$. Then 
there exist holomorphic sections $h_{0},...,h_{N}$ of $\tilde f^*V\otimes f^*L$ such that 
$$
h:=[h_{0}:\cdot\cdot\cdot:h_{N}]:\tilde X\rightarrow\mathbb{CP}^N
$$
is an embedding. 
\end{thm}
\begin{proof}
Note that when $\tilde X=X$ and $\tilde f$ is the identity map this is essentially 
the content of Kodaira's embedding theorem.  
For any two points $\tilde x_{1}$ and $\tilde x_{2}$ we have to produce sections that separate the points, and 
for any one point $\tilde x_{1}$ we have to produce sections with non-vanishing differentials.  We show how to 
separate points, controlling differentials is similar.  Both are standard constructions.   

Let $u_{1}$ be a smooth (n,0)-form with coefficients in $\tilde f^*L$ such that $u_{1}$ is holomorphic and non-zero
near $\tilde x_{1}$ and such that $u_{1}$ is constantly zero near $\tilde x_{2}$.  Let $u_{2}$ be 
a smooth $(n,0)$-form with coefficients in $\tilde f^*L$ such that $u_{2}$ is holomorphic and non-zero near both points
$\tilde x_{1}$ and $\tilde x_{2}$.  Let $v_{j}=\overline\partial u_{j}$ for $j=1,2$.
Then each $v_{j}\in L^2(\varphi_{\tilde x_{1},\tilde x_{2}})$.  According to Theorem 3.1 in 
\cite{De} there exist smooth (n,0)-forms $w_{j}\in L^2(\varphi_{\tilde x_{1},\tilde x_{2}})$
such that $\overline\partial w_{j}=v_{j}$, and both $w_{j}$ need to vanish at the points $\tilde x_{j}$.
Then $h_{j}:=u_{j}-w_{j}$ are holomorphic sections and we see that the function $\frac{h_{1}}{h_{2}}$
separates the two points.  
\end{proof}

We proceed to describe the inductive procedure.  As noted above the initial embedding 
of $X_{1}$ is constructed by applying Theorem \ref{embedding} with $\tilde X=X$
and then Lemma \ref{projection} below.  

The plan is to inductively construct embeddings 
$\varphi_{n}:X_{n}\rightarrow\mathbb{CP}^{2n+1}$
approximating the immersions $\varphi_{n-1}\circ f_{n-1}$ and then define 
\begin{itemize}
\item[(a)] $\Phi(\{x_{n}\}_{n=1}):=\underset{n\rightarrow\infty}{\mathrm{lim}}
\varphi_{n}(x_{n})$. 
\end{itemize}

Let $d_{1}$ and $d_{2}$ be Riemannian metrics on $X_{1}$ and $\mathbb{CP}^{2n+1}$
respectively and for any two maps $g_{i}:X_{1}\rightarrow\mathbb{CP}^{2n+1}$ and a closed 
set $K\subset X_{1}$ let $\|g_{1}-g_{2}\|_{K}$ denote the maximum distance
between images $g_{1}(x)$ and $g_{2}(x)$ for $x\in K$ with respect the distance $d_{2}$. 
Let $U_{j}'\subset\subset U_{j}$ be open balls
for $j=1,...,m_0$ such that $\{U_{j}'\}_{j=1}^{m_0}$ is a cover of $X_{1}$.  
Choose $\delta>0$ such that for any two points $x_{1},x_{2}\in X_{1}$ either 
$d_{1}(x_{1},x_{2})\geq\delta$ or $x_{1},x_{2}\in U_{j}'$ for at least one $j$.
Choose 
$\epsilon>0$ such that if $g:\overline{U_{j}}\rightarrow\mathbb{CP}^{2n+1}$
is a holomorphic map with $\|g-\varphi_{1}\|_{\overline{U_{j}}}<\epsilon$
then $g|_{\overline{U_{j}'}}$ is an embedding and such that for any pair of points
$x_{1},x_{2}\in X_{1}$ we have that $d_{1}(x_{1},x_{2})\geq\delta$
implies that $d_{2}(\varphi_{1}(x_{1}),\varphi_{1}(x_{2}))\geq\epsilon$.   

\medskip

We now describe the inductive procedure to construct an embedding $\varphi_{n+1}$
and a constant $\epsilon_{n+1}$ given embeddings $\varphi_{1},...,\varphi_{n}$
and constants $\epsilon_{1},...,\epsilon_{n}$.   Let $\epsilon_{1}=\epsilon$.

We claim first that we can construct $\varphi_{n+1}:X_{n+1}\rightarrow\mathbb{CP}^{2n+1}$
such that 
\begin{itemize}
\item[(b)] $\|\varphi_{n+1}-\varphi_{n}\circ f_{n}\|_{X_{n+1}}<
(\frac{1}{2})^{n+1}\cdot\frac{\epsilon_{n}}{3}$.
\end{itemize}
The immersion $\varphi_{n}\circ f_{n}$ is given by sections $[h_{0};\cdot\cdot\cdot;h_{2n+1}]$
in the bundle $f(n)^*V\otimes f(n)^*L$.  According to Theorem \ref{embedding} there exist 
sections $\tilde h_{1},...,\tilde h_{N}$ such that 
$$
[h_{0}:\cdot\cdot\cdot h_{2n+1}:\tilde h_{1}:\cdot\cdot\cdot:\tilde h_{N}]
$$
is an embedding.  The claim then follows from Lemma \ref{projection} below.

Next we define $\epsilon_{n+1}$.  For each $j$ let $s^j_{k}:U_{j}\rightarrow X_{n+1}$ denote the 
$\Pi_{j=0}^{n}d_{j}$ sections over 
$f(n):=f_{1}\circ\cdot\cdot\cdot\circ f_{n}:X_{n+1}\rightarrow X_{1}.$  Define 
\begin{itemize}
\item[(c)] $
\tilde\epsilon_{n+1}:=\underset{1\leq j\leq m_0, k_{1}\neq k_{2}}
{\mathrm{min}}\{\mathrm{dist}(\varphi_{n+1}\circ s^j_{k_{1}}(\overline{U_{j}'})
,\varphi_{n+1}\circ s^j_{k_{2}}(\overline{U_{j}'}))\}
$, and 
\item[(d)] $\epsilon_{n+1}:=\mathrm{min}\{\tilde\epsilon_{n+1},\epsilon_{n}\}$.
\end{itemize}
Since each $U_{j}$ is simply connected we have that $\epsilon_{n+1}>0$.  \

Now define $\Phi$ as in (a).  The map is well defined because of (b).  Let $\tilde U'_{j}$
be any lift of $U'_{j}$ to $X_{\infty}$.  By the choice of $\epsilon$ and (b) we see that 
$\Phi|_{\tilde U_{j}'}$ is an embedding.  To see that $\Phi$ is injective let $\{x_{n}\}$
and $\{x_{n}'\}$ be different points in $X_{\infty}$.  
If $d_{1}(x_{1},x_{1}')\geq\delta$
it follows by the choice of $\epsilon$ and (b) that $d_{2}(\Phi(\{x_{n}\}),\Phi(\{x_{n}'\}))
\geq\frac{\epsilon}{3}$.
By the choice of $\delta$, the remaining case is the case when 
$x_{1}$ and $x_{1}'$ are contained in the same $U_{j}'$.  Let $\{x_{1}''\}$ be 
the point where $x_{1}''=x_{1}$ and each $x_{n}''$ is determined by the lifting 
of $x_{n}'$ to $X_{n}$.  Let $m+1$ be the smallest integer such that $x_{m}''\neq x_{m}$.
By definitions (c) and (d) 
we have that $d_{2}(\varphi_{m+1}(x_{m+1}'),\varphi_{m+1}(x_{m+1}))
\geq\epsilon_{m+1}$.  
By (b) we get that $d_{2}(\Phi(\{x_{n}\}),\Phi(\{x_{n}'\}))\geq
\frac{\epsilon_{n+1}}{3}$.  On the other hand, if $\{x_{n}'''\}$ is 
a point with $x_{n}'''=x_{j}$ for $j=1,...,m+1$ we get from (b) that 
$d_{2}(\Phi(\{x_{n}'''\}),\Phi(\{x_{n}\}))<\epsilon_{m+1}$, and this concludes 
the argument that $\Phi$ is an embedding. 
\end{proof}

We now recall the projection result used in the above proof.  
We define projections
$\pi_{k}:\mathbb P^N \rightarrow \mathbb P^k$ for $N >k$ by
$$
\pi_{k}([x_0:\cdots :x_N]):=[x_0: \cdots :x_k].
$$ 
This is well defined outside the copy of $\mathbb P^{N-k-1}$
given by $[0:0:0:0:x_k+1: \cdots x_N].$

\begin{lemma}\label{projection}
Let $g:X\rightarrow\mathbb P^N$ be any embedding, $N\geq 2n+2$.  
Then for any $\epsilon>0$ there exists an automorphism
$$
\Phi([x_{0}: \cdots :x_{N}])=[x_{0}+\sum_{j=2n+2}^N\epsilon_{0,j}x_{j}: \cdots :
x_{2n+1}+\sum_{j=2n+2}^N\epsilon_{2n+1,j}x_{j}:x_{2n+2}:\cdot\cdot\cdot:x_{N}],
$$
such that $|\epsilon_{i,j}|<\epsilon$ for all $i,j$ and $\pi_{2n+1}\circ\Phi\circ g$
is an embedding.  In fact this holds for almost all choices of $\{\epsilon_{i,j}\}$.  
\end{lemma}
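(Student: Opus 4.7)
The plan is to view the composition $\pi_{2n+1}\circ\Phi$ as a linear projection of $\mathbb P^N$ onto $\mathbb P^{2n+1}$ whose center is the projective subspace $L_\Phi:=\Phi^{-1}(\{x_0=\cdots=x_{2n+1}=0\})$ of dimension $N-2n-2$. When every $\epsilon_{i,j}$ vanishes this recovers the standard center $L_0$ of $\pi_{2n+1}$, and the assignment $\{\epsilon_{i,j}\}\mapsto L_\Phi$ is a rational chart on the Grassmannian $G:=\mathrm{Gr}(N-2n-2,\mathbb P^N)$ covering an open neighborhood of $L_0$. The question reduces to showing that the set of ``bad'' centers in $G$ is a proper algebraic subvariety.

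Write $Y:=g(X)\subset\mathbb P^N$, a smooth compact submanifold of dimension $n$, hence algebraic by Chow's theorem. The classical criterion for a linear projection to embed a subvariety asserts that $\pi_{2n+1}\circ\Phi\circ g$ is an embedding of $X$ precisely when $L_\Phi$ is disjoint from $Y$ (so that the projection is defined on $Y$), from the secant variety $\mathrm{Sec}(Y)$, defined as the Zariski closure of the union of chords $\overline{pq}$ with $p\neq q\in Y$ (so that the projection is injective on $Y$, since two distinct points of $Y$ collapse if and only if $L_\Phi$ meets the chord joining them), and from the tangent variety $\mathrm{Tan}(Y)$, the union over $p\in Y$ of the projective tangent spaces to $Y$ at $p$ (so that the differential of the projection has no kernel at any point of $Y$, because a tangent direction at $p$ is annihilated by the projection exactly when $L_\Phi$ meets the corresponding projective tangent line). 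Standard parameterizations via $Y\times Y\times\mathbb P^1$ and the projectivized tangent bundle yield the dimension bounds $\dim Y=n$, $\dim\mathrm{Sec}(Y)\leq 2n+1$, and $\dim\mathrm{Tan}(Y)\leq 2n$.

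The remaining step is a routine dimension count in $G$. Fixing a point $p\in\mathbb P^N$, the Schubert cycle $\{L\in G:p\in L\}$ has codimension $2n+2$ in $G$, so for any subvariety $W\subset\mathbb P^N$ the incidence locus $\mathcal B_W:=\{L\in G:L\cap W\neq\emptyset\}$ has dimension at most $\dim G+\dim W-(2n+2)$. Applied to $W$ equal to $Y$, $\mathrm{Sec}(Y)$, or $\mathrm{Tan}(Y)$, this yields at most $\dim G-1$, so $\mathcal B:=\mathcal B_Y\cup\mathcal B_{\mathrm{Sec}(Y)}\cup\mathcal B_{\mathrm{Tan}(Y)}$ is a proper algebraic subvariety of $G$. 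Pulling $\mathcal B$ back through the chart yields a proper algebraic subvariety of $\mathbb C^{(2n+2)(N-2n-1)}$, whose complement is open, dense, and of full Lebesgue measure. In particular, for every $\epsilon>0$ almost every choice of $\{\epsilon_{i,j}\}$ with $|\epsilon_{i,j}|<\epsilon$ produces an embedding.

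The main obstacle I anticipate is verifying cleanly the equivalence between the embedding property and the three avoidance conditions, together with the dimension estimates for the secant and tangent varieties; both are classical but need a little care, particularly handling the closure in the definition of $\mathrm{Sec}(Y)$. Once these ingredients are in place the rest is an elementary generic-position argument in the Grassmannian.
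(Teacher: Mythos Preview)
Your argument is correct, but it takes a different route from the paper's. The paper proceeds iteratively: it projects from a single point $q\in\mathbb P^N\setminus H_N$ to the hyperplane $H_N=\{x_N=0\}$, shows that for almost every $q$ this is an embedding, and then repeats $N-2n-1$ times. The key step is an elementary incidence count: one forms the set $Z\subset(\mathbb P^N)^3$ of triples $(p,p',r)$ with $p,p'\in g(X)$ and $r$ on the secant line $\overline{pp'}$ (tangent line when $p=p'$), observes $\dim Z=2n+1<N$, and concludes that the projection of $Z$ to the third factor has measure zero. The explicit form of $\pi_q$ in coordinates then matches the shape of $\Phi$ required in the statement.

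By contrast, you do the whole projection in one step by identifying $\pi_{2n+1}\circ\Phi$ with the projection from the $(N-2n-2)$-plane $L_\Phi$, parametrize centers by a chart on $\mathrm{Gr}(N-2n-2,\mathbb P^N)$, and bound the bad locus using $\dim\mathrm{Sec}(Y)\leq 2n+1$, $\dim\mathrm{Tan}(Y)\leq 2n$, together with the Schubert codimension $2n+2$ for the condition ``$L\ni p$''. This is cleaner conceptually and gives the ``almost all $\{\epsilon_{i,j}\}$'' claim in one stroke, at the cost of invoking slightly more machinery (Grassmannians, Schubert cycles, Chow's theorem for algebraicity of $g(X)$). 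The paper's approach is more hands-on and avoids that machinery, but has to iterate and check at each step that the composed perturbation still has the triangular form in the statement. Either route is fine; the self-identified ``obstacle'' you flag (the embedding criterion via secant and tangent avoidance, and the dimension bounds) is indeed classical and requires only the care you indicate.
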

\begin{proof}
First let $H_{N}$ denote the hyperplane $\{x_{N}=0\}$.  If $q\in\mathbb P^N$
is a point $q\notin H_{N}$ there is a projection  $\pi_q$ from $\mathbb P^N\setminus\{q\}$
to $H_{N}$ defined as follows.  For a point $p\in\mathbb P^N\setminus\{q\}$ there is a unique
line passing through $p$ and $q$.  This line will intersect the hyperplane $H_{N}$
at a single point, and we let this point be $\pi_{q}(p)$.  If $q=[0:\cdot\cdot\cdot:1]$, 
this projection is the map 
$[x_{0}:\cdot\cdot\cdot:x_{N}]\mapsto [x_{0}:\cdot\cdot\cdot:x_{N-1}:0]$.
The lemma then follows from the fact that 
\begin{itemize}
\item[(a)] For almost all choices of $q$ the map $\pi_{N-1}\circ g$
is an embedding as long as $N\geq 2n+2$. 
\end{itemize}
Before we justify this standard fact we show how the lemma follows. 
Let $q$ be given by $q=[\delta_{0}:\cdot\cdot\cdot:\delta_{N}]$.
Then $\pi_{q}$ is given by 
\begin{itemize}
\item[(b)]
$
[x_{0}:\cdot\cdot\cdot:x_{N}]\overset{\pi_{q}}{\mapsto}[x_{0}-\frac{\delta_{0}x_{N}}
{\delta_{N}}:\cdot\cdot\cdot:x_{N-1}-\frac{\delta_{N-1}x_{N}}
{\delta_{N}}:0]
$
\end{itemize}
Since we can choose $q$ arbitrarily close to the point $[0:\cdot\cdot\cdot:0:1]$, 
we see that the lemma follows by repeated use of (a) and (b).  We can wiggle $\delta$ to make sure that the components of the new embedding do not have a common zero.

\

To show (a) let $Z$ be the subset of $W:=\mathbb P^N\times\mathbb P^N\times\mathbb P^N$
defined by $p\in g(X), p'\in g(X), p\neq p'$, and $r$ is on the (unique) line that contains $p$ and $p'$.
If $p=p'\in g(X),$ we let $r$ be on the tangent line to $g(X)$ at $p.$
One sees that $Z$ is a  2n+1 dimensional algebraic set in $W$.  Hence the image of $Z$ under the projection 
$(p,q,r)\mapsto r$ has zero measure in $\mathbb P^N.$ Hence, almost all choices of $q\in \mathbb P^N$ gives an embedding.

\end{proof}

\subsection{Projective Limits of Riemann Surfaces}

\subsubsection{Ergodic Properties}

\

In the case of Riemann surface laminations one can always 
construct directed currents as cluster points of push forwards
of currents on the universal cover of a leaf, \cite{FS2005, FS2006,FS2008}.

\begin{thm}
Let $X_\infty$ be a projective lamination by Riemann surfaces. If a leaf $L$ is covered by the unit disc $D,\phi:D \rightarrow L,$ then
$\tau_r= \frac{1}{m_r} \phi_*(\log^+ (r/|\zeta|)D) \rightarrow T$ when $r \rightarrow 1.$ If a leaf is covered
by $\mathbb C$ then there is a subsequence $r_n \nearrow \infty$ such that $\frac{1}{m(r_n)}\phi_*(D_{r_n}) \rightarrow T $.
\end{thm}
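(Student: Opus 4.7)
The strategy is to show that $\{\tau_r\}_{r<1}$ in the hyperbolic case and $\{m(r)^{-1}\phi_*[D_r]\}$ in the parabolic case are weakly relatively compact families of positive $(1,1)$-currents on $X_\infty$, directed by the lamination, of mass one, whose $\partial\overline\partial$ tends to zero in mass. Once that is in hand, every weak cluster point $S$ is a directed positive $\partial\overline\partial$-closed current of bidimension $(1,1)$ with $\langle S,\omega\rangle=1$ (where $\omega=\pi_1^*\omega_1$), so Theorem~\ref{current} forces $S=T$. A weakly relatively compact family whose cluster set is a singleton is convergent, yielding the announced limits.

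In the hyperbolic case, the Lelong--Jensen identity
\[
dd^c\log^+(r/|\zeta|)=\sigma_r-\delta_0,
\]
with $\sigma_r$ the normalized arc-length measure on $\{|\zeta|=r\}$, gives $dd^c\tau_r=m_r^{-1}\phi_*(\sigma_r-\delta_0)$, whose total-variation norm is at most $2/m_r$. By Fubini, $m_r=\int_0^r A(t)\,dt/t$ with $A(t)=\int_{|\zeta|<t}\phi^*\omega$. Because the projective limit is built from covers of degree $d_n\ge 2$, each leaf covers $X_1$ with infinite degree and so has infinite $\omega$-area; thus $A(t)\to\infty$ and $m_r\to\infty$ as $r\to 1^-$. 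Therefore $\|dd^c\tau_r\|\to 0$, and the first paragraph concludes the hyperbolic case for the entire family.

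In the parabolic case $dd^c[D_r]$ is the normalized arc-length measure on $\{|\zeta|=r\}$, so the mass of its push-forward is comparable to the $\omega$-length $L(r)$ of $\phi(\partial D_r)$, while $m(r)=A(r)$ is the $\omega$-area of $\phi(D_r)$. The main obstacle is the classical length--area step: one must exhibit $r_n\nearrow\infty$ with $L(r_n)/A(r_n)\to 0$. This is Ahlfors' lemma. Indeed, if $L(r)\ge cA(r)$ for all $r\ge r_0$, the Cauchy--Schwarz inequality applied to the parametrization $\theta\mapsto\phi(re^{i\theta})$ yields $L(r)^2\le 2\pi r\,A'(r)$, hence $A'(r)/A(r)^2\ge c^2/(2\pi r)$; integration from $r_0$ to infinity would force $1/A(r_0)=\infty$, a contradiction. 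Extracting such a sequence and reapplying the compactness-and-uniqueness argument of the first paragraph finishes the proof.
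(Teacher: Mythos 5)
Your argument follows the same route as the paper's: show that every weak cluster point of the normalized Nevanlinna (resp.\ Ahlfors) currents is a directed positive $\partial\overline\partial$-closed current of mass one, and then invoke the uniqueness in Theorem~\ref{current}. The paper merely cites \cite{FS2005} for the hyperbolic-case estimate and \cite{FS2008} (Ahlfors' length-over-area lemma) for the parabolic case, where you carry out the computations explicitly, so the two proofs coincide in substance.
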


\begin{proof}
The factor $\frac{1}{m(r)}$ is a normalization in order to have currents of mass $1.$ It is shown in \cite{FS2005} that all cluster points of the family $\tau_r$ are positive and $\partial \overline{\partial}$-closed. The uniqueness in Theorem 1 implies that they are equal to $T.$ The case of $\mathbb C$ is classical and follows from the length over area estimates due to Ahlfors, see for example \cite{FS2008} p. 417.
\end{proof}

\subsection{Holomorphic Maps}

For a projective lamination over a torus we can construct 
holomorphic self maps.   \

\begin{prop}\label{selfmap2}
Let $X_{\infty}$ be a projective limit over a torus $X_{1}$.  Then for any two 
points $\{x_{n}\}$ and $\{x_{n}'\}$ in $X_{\infty}$ there exists a 
holomorphic automorphism $\varphi:X_{\infty}\rightarrow X_{\infty}$  
such that $\varphi(\{x_{n}\})=\{x_{n}'\}$.  
\end{prop}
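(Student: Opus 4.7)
The plan is to realize the desired $\varphi$ as a translation, once $X_\infty$ has been equipped with a natural compact abelian topological group structure. Each $X_n$, being a finite unramified cover of a torus, is itself a complex torus; translations on a torus are biholomorphisms; so if the covering maps $f_n$ can be arranged to be group homomorphisms, then translations on each $X_n$ will descend through the inverse system to homeomorphisms of $X_\infty$ that are biholomorphic along leaves.

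The first and essentially only nontrivial step is to pick origins compatibly. I would fix any $0_1 \in X_1$ and inductively choose $0_{n+1}$ to be some point in $f_n^{-1}(0_n)$. By the rigidity of holomorphic maps between complex tori, any such map sending origin to origin is a group homomorphism, so with these choices each $f_n: X_{n+1} \to X_n$ becomes a surjective homomorphism with finite kernel. Writing $X_n = \mathbb{C}^d/\Lambda_n$ with $\Lambda_{n+1} \subset \Lambda_n$, coordinatewise addition then endows $X_\infty$ with the structure of a compact abelian topological group whose identity is $(0_n)_n$.

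Given this, for $\xi = (\xi_n) \in X_\infty$ I would define $T_\xi(\{x_n\}) := \{x_n + \xi_n\}$. Compatibility with the covers is immediate from $f_n$ being a homomorphism; $T_\xi$ is a homeomorphism of $X_\infty$ with continuous inverse $T_{-\xi}$; and on each plaque it agrees with a translation on a chart of some $X_n$, so it is biholomorphic along leaves. To finish, given $\{x_n\}$ and $\{x_n'\}$, set $\xi := (x_n' - x_n)_n$, which lies in $X_\infty$ by the same compatibility argument, and let $\varphi := T_\xi$; then $\varphi(\{x_n\}) = \{x_n'\}$. The main obstacle is really only the first step—arranging compatible origins so that the $f_n$ become homomorphisms—and this costs essentially nothing, since we have total freedom in choosing origins.
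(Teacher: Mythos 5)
Your proof is correct and is essentially the paper's argument in slightly different clothing: the paper lifts to $\mathbb C^d$ and defines $\sigma_n(\zeta)=\zeta+(\zeta'_n-\zeta_n)$ for chosen representatives, then checks $\sigma_n\circ f_n=f_n\circ\sigma_{n+1}$, which is exactly your observation that the shifts $\xi_n=x'_n-x_n$ form an element of $X_\infty$ once compatible origins make the $f_n$ into group homomorphisms. The group-theoretic packaging is a clean way to organize the same verification, with no change in substance.
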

We will use the following lemma, the proof of which is left to the reader. 
\begin{lemma}\label{selfmap1}
Let $X_\infty=(X_n,f_n)$ and $Y_\infty=(Y_n,g_n)$ be projective laminations with 
leaf dimension equal to $k.$ Let $\sigma:X_{\infty}\rightarrow Y_{\infty}$
be a map, 
 $\sigma\{x_n\}=\{\sigma_n(x_n)\}$ be a map from $X_\infty$ to $Y_\infty$. Then $\sigma$ is well defined if and only if $\sigma_n \circ f_n=g_n\circ \sigma_{n+1}.$ When $\sigma_n:X_n \rightarrow Y_n$ are holomorphic, then $\sigma$ is a continuous map holomorphic on leaves.
\end{lemma}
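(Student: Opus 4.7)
The plan is to handle the three assertions—well-definedness, continuity, and holomorphy on leaves—in turn, by unwinding the definitions from the preliminaries.

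First, for the well-definedness equivalence: the sequence $\{\sigma_n(x_n)\}$ lies in $Y_\infty$ exactly when $g_n(\sigma_{n+1}(x_{n+1})) = \sigma_n(x_n)$ for every $n$ and every $\{x_n\} \in X_\infty$. Substituting $x_n = f_n(x_{n+1})$, this becomes the equation $g_n \circ \sigma_{n+1} = \sigma_n \circ f_n$ evaluated on the set of points of $X_{n+1}$ that occur as an $(n{+}1)$st coordinate of an element of $X_\infty$. Since the $f_j$ are surjective coverings, any point of $X_{n+1}$ extends to a full sequence in $X_\infty$ by projecting downward through the $f_j$'s and lifting upward by choosing preimages one step at a time. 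Hence the identity must hold on all of $X_{n+1}$, and conversely that identity trivially makes $\{\sigma_n(x_n)\}$ a compatible sequence in $Y_\infty$.

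For continuity under the assumption that each $\sigma_n$ is holomorphic, I would invoke the universal property of the projective limit topology on $Y_\infty$: it is the coarsest topology making every $\pi_n^Y$ continuous, so continuity of a map into $Y_\infty$ reduces to continuity of its composition with each $\pi_n^Y$. The relation $\pi_n^Y \circ \sigma = \sigma_n \circ \pi_n^X$ exhibits this composition as a composition of continuous maps.

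For holomorphy on leaves, the key input is that any plaque $P$ of $X_\infty$ is parametrized by a local section $s : U \to X_\infty$ of the tower over an open set $U \subset X_1$, and that $\pi_n^X|_P : P \to s_n(U) \subset X_n$ is a biholomorphism for every $n$, where $s_n := \pi_n^X \circ s$. The composition $\sigma \circ s : U \to Y_\infty$ is continuous with first coordinate $\sigma_1|_U$, and its higher coordinates form a continuous lift of $\sigma_1|_U$ through the $Y$-tower. By the unique lifting property for covering maps, applied inductively up the tower, this lift coincides with a local section of the $Y$-tower, so $\sigma(P)$ is contained in a single plaque $Q$ of $Y_\infty$, on which every $\pi_n^Y|_Q$ is a biholomorphism onto its image. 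The identity $\pi_n^Y|_Q \circ \sigma|_P = \sigma_n \circ \pi_n^X|_P$ then displays $\sigma|_P$, after composition with a biholomorphism, as the holomorphic map $\sigma_n \circ \pi_n^X|_P$, whence $\sigma|_P$ itself is holomorphic as a map of $k$-dimensional complex manifolds. The entire argument is routine; the only step requiring care is the verification that plaques map into plaques via the path-lifting property, which is presumably why the authors leave the proof to the reader.
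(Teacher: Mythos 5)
The paper leaves this lemma to the reader, so there is no reference proof to compare against; your argument is the natural one and it is correct. All three parts—well-definedness via surjectivity of the tower projections, continuity via the universal property of the inverse-limit topology together with $\pi_n^Y\circ\sigma=\sigma_n\circ\pi_n^X$, and leafwise holomorphy via factoring $\sigma$ through local sections of the covering tower—go through. The only spot worth tightening is the claim that $\sigma(P)$ lands in a single plaque: one should first shrink the base $U$ of the plaque so that $\sigma_1(U)$ is contained in a flow-box base for $Y_\infty$, and then the inductive unique-lifting argument on the connected $U$ (both $\sigma_n\circ s_n$ and the plaque section $t_n\circ\sigma_1$ are lifts of $\sigma_1$ through $h_n^Y$ agreeing at a point) gives exactly what you assert; alternatively one can skip plaques entirely and note that on a leaf $L$ the map $\pi_1^Y|_{\sigma(L)}$ is a local biholomorphism and $\pi_1^Y\circ\sigma|_L=\sigma_1\circ\pi_1^X|_L$ is holomorphic, which gives leafwise holomorphy directly.
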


\emph{Proof of Proposition \ref{selfmap2}:} By lifting everything to the complex plane 
we will represent the sequence of coverings by linear maps $f_{n}:\mathbb C\rightarrow\mathbb C$.
Define $\sigma_{n}(\zeta):=\zeta+(\zeta_{n}'-\zeta_{n})$, where $\zeta_{n}$ and 
$\zeta_{n}'$ are representatives for the classes of $x_{n}$ and $x_{n}'$ respectively.
Note that $f_{n}(\zeta_{n+1})-\zeta_{n}$ and $f_{n}(\zeta_{n+1}')-\zeta_{n}'$
are in the class of zero.  Then 
\bea
\sigma_{n}\circ f_{n}(\zeta) & = & f_{n}(\zeta) + \zeta_{n}' - \zeta_{n}\\
& \sim & f_{n}(\zeta) + f_{n}(\zeta_{n+1}') - f_{n}(\zeta_{n+1}) \\
& = & f_{n}(\zeta + \zeta_{n+1}' - \zeta_{n+1}) \\
& = & f_{n}\circ\sigma_{n+1}(\zeta), 
\eea
and the proposition follows from Lemma \ref{selfmap1}. 
$\hfill\square$

\begin{remark}
In general it is not possible to construct maps $\sigma_{n}$ such that 
$\sigma_{n}(x_{n})=x_{n}'$.  Starting with a surface 
$X_{1}$ of genus 2 it is possible to construct coverings $\rightarrow X_{3}\rightarrow X_{2}\rightarrow X_{1}$
with a point $x_{1}\in X_{1}$, preimages $x_{3}$ and $x_{3}'$ in $X_{3}$ without 
a Deck transformation for the covering $X_{3}\rightarrow X_{1}$ such that 
$x_{3}$ is sent to $x_{3}'$.  So we have to assume that all the coverings of $X_{1}$ are Galois coverings, i.e. the deck transformations corresponding to $X_n\rightarrow X_1$ are transitive on fibers.
This is a sufficient condition. 
\end{remark}

\subsubsection{Projective limits over Riemann surfaces and topology of leaves}

Let $X$ be a compact Riemann surface of genus $g_{X}\geq 1$.  
We want to construct 2 to 1 unbranched covers of $X$.  
Let $\Gamma=\{\gamma_{1},...,\gamma_{k}\}$ be a collection of pairwise disjoint simple closed 
curves such that $X\setminus\Gamma$ is connected.  Since $X$ is orientable each 
curve $\gamma_{j}$ has a neighborhood $U_{j}$ such that $U_{j}\setminus\gamma_{j}$
has two connected components $U_{j}^1$ and $U_{j}^2$.  We let $\gamma_{j}^i$
denote the curve $\gamma_{j}$ depending on which connected component you approach it 
from, i.e., we can think of $X\setminus\Gamma$ as a bordered Riemann surface with $2$k 
boundary components.    \

Let $X_{1}$ and $X_{2}$ be two exact copies of the Riemann surface $X$.  
We get a new Riemann surface $Y$ by identifying each curve $\gamma_{j}^1$ in 
$X_{1}$ with $\gamma_{j}^2$ in $X_{2}$  and vice versa.  Because $X\setminus\Gamma$
is connected, we see that $Y$ is connected.  There is a natural projection $\pi:Y\rightarrow X$,
and this projection is clearly a 2 to 1 unbranched covering map.  We call $Y$
\emph{the 2 to 1 unbranched covering with respect to $\Gamma$}. \

In the case when $X_1$ is a Riemann surface we have a lot of flexibility to control the 
topology of leaves. We first prove a variation of Sullivan's example 6,  \cite{S1976}.

\begin{prop}
When $X_1$ is of genus $1$, all the leaves in $X_\infty$ are of the same nature, $\mathbb C$ or
$\mathbb C^*.$
\end{prop}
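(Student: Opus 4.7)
The plan is to combine two observations: first, each leaf must be either $\mathbb{C}$ or $\mathbb{C}^*$ (never a torus), and second, Proposition \ref{selfmap2} supplies automorphisms of $X_\infty$ that carry any leaf biholomorphically onto any other, forcing the common type.

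First I would verify that every $X_n$ is a complex torus. Indeed, each $X_n$ is a finite unbranched cover of $X_1$, and any connected unbranched cover of a torus is again a torus. By the lamination structure recalled in Section 2.2, each leaf $L$ is a Riemann surface and the restriction $\pi_n|_L \colon L \to X_n$ is a holomorphic covering map. A connected Riemann surface covering a torus is necessarily $\mathbb{C}$, $\mathbb{C}^*$, or a torus (classification by subgroups of $\mathbb{Z}^2$).

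Next I would rule out the torus case by a degree argument. Suppose $L$ were compact. Then $\pi_1|_L \colon L \to X_1$ would be a finite covering, of some degree $N < \infty$. But for each $n$ we have the factorization $\pi_1|_L = f_1 \circ \cdots \circ f_{n-1} \circ \pi_n|_L$, and the cover $X_n \to X_1$ has degree $d_1 d_2 \cdots d_{n-1} \geq 2^{n-1}$. So $N \geq 2^{n-1}$ for every $n$, which is impossible. Hence every leaf is $\mathbb{C}$ or $\mathbb{C}^*$.

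Finally I would apply Proposition \ref{selfmap2}: for any two points $\{x_n\}, \{x_n'\} \in X_\infty$ there is a holomorphic automorphism $\varphi \colon X_\infty \to X_\infty$ sending one to the other. Since $\varphi$ is a homeomorphism that is holomorphic on leaves, it carries plaques to plaques and hence maps the leaf $L$ through $\{x_n\}$ biholomorphically onto the leaf $L'$ through $\{x_n'\}$. Being $\mathbb{C}$ versus $\mathbb{C}^*$ is a biholomorphic invariant (one is simply connected and the other is not), so all leaves share the same nature. The main obstacle is the first step — the ruling out of compact leaves — but the degree factorization above makes it immediate; the transitivity argument afterward is essentially just unpacking the conclusion of Proposition \ref{selfmap2}.
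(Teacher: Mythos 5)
Your argument is correct but takes a genuinely different route from the paper's one-line proof. The paper works directly with the lattices $\Gamma_n\subset\mathbb C$ defining the tori $X_n=\mathbb C/\Gamma_n$: since $X_1$ is abelian, the subgroups $\Gamma_n$ are all normal in $\Gamma_1$, each leaf is literally $\mathbb C/\bigcap_n\Gamma_n$, and the two cases $\mathbb C$ and $\mathbb C^*$ correspond to whether every sequence of generators for $\Gamma_n$ diverges or whether one remains uniformly bounded (the rank-2 case being ruled out exactly as in your degree argument, since the indices $[\Gamma_1:\Gamma_n]=d_1\cdots d_{n-1}\to\infty$). Your version replaces the explicit lattice computation with three softer ingredients: the covering-space classification over a torus to narrow the types down to $\mathbb C$, $\mathbb C^*$, or a torus; the degree bound $d_1\cdots d_{n-1}\geq 2^{n-1}$ to exclude compact leaves; and the homogeneity furnished by Proposition \ref{selfmap2} to conclude that all leaves, being pairwise biholomorphic, have a common type. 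What you gain is independence from the particular presentation of the lattices; what you give up is the sharper information in the paper's formulation, which actually identifies \emph{which} of the two alternatives occurs in terms of the growth of the lattice generators. Both proofs are valid, and your invocation of Proposition \ref{selfmap2} is legitimate since it appears (and is proved) earlier in the paper and its proof does not depend on the present statement.
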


\begin{proof}
Let $(a_n)$ and $(b_n+ic_n)$ generate the lattice for $X_n$. The dichotomy is determined by either both $a_n$ or $(b_n+ic_n)$ go to infinity or one of the sequences is uniformly bounded.
\end{proof}

In \cite{S1991} Sullivan has introduced the Teichm\"{u}ller space for hyperbolic 
laminations, i.e., all leaves are hyperbolic.  Here we just show that for the projective 
laminations with fibre $\mathbb C^*$ (resp. $\mathbb C$) we have infinitely many inequivalent realizations. 
Let us consider projective limits over tori defined by lattices $\Gamma_{\beta}$
generated by $\{1,\beta i\}, \beta\in\mathbb R$ over $\mathbb Z$.  We let $\Gamma_{\beta,j}$
denote the lattice $2^j\Gamma_{\beta}$.  The maps $f_{j}:\mathbb C
\overset{\mathrm{incl}}{\rightarrow}\mathbb C$ induce 4-1 unbranched covers 
$\tilde f_{j}:\mathbb C/\Gamma_{\beta,j+1}\rightarrow\mathbb C/\Gamma_{\beta,j}$.
A point in the projective limit is represented by a sequence $\{\zeta_{j}\}_{j=1}^\infty$,
$\zeta_{j}\in\mathbb C$, where $\zeta_{j+1}-\zeta_{j}\in\Gamma_{\beta,j}$ 
and $\zeta_{j}\sim\zeta_{j}'$ if $\zeta_{j}-\zeta_{j}'\in\Gamma_{\beta,j}$. 
We denote this projective limit by $X_{\beta}$.

Let us parametrize the leaf containing the point $\{0\}$; let $\varphi_{\beta}:\mathbb C\rightarrow X_{\beta}$
be defined by $\zeta\mapsto\{\zeta\}$.   Since a point $\zeta$ can be in $\Gamma_{\beta,j}$
for all $j$ only if $\zeta$ is zero we see that $\varphi_{\beta}$ is injective.  We also 
have that for any point $\zeta\in\mathbb C$
\begin{itemize}
\item[(a)] $\varphi_{\beta}(\zeta+2^j)\rightarrow\varphi_{\beta}(\zeta)$ and 
\item[(b)]$\varphi_{\beta}(\zeta+2^j\beta i)\rightarrow\varphi_{\beta}(\zeta)$
\end{itemize}
uniformly as $j\rightarrow\infty$. 

\begin{prop}
If $X_{\beta_{1}}$ and $X_{\beta_{2}}$ are isomorphic then $\beta_{2}=r\beta_{1}$
with $r\in\mathbb Q$. 
\end{prop}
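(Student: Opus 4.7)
The plan is to examine an isomorphism $\Psi : X_{\beta_1} \to X_{\beta_2}$ via its effect on the leaf through the basepoint, and to translate the topology of the inverse limit into an arithmetic condition on the leafwise derivative of $\Psi$. After composing with an automorphism of $X_{\beta_2}$ provided by Proposition \ref{selfmap2}, I may assume $\Psi(\{0\})=\{0\}$. Since $\Psi$ takes leaves biholomorphically to leaves, and the leaf through $\{0\}$ in $X_{\beta_i}$ is parametrized by $\varphi_{\beta_i} : \mathbb C \to X_{\beta_i}$, there is a biholomorphism $F : \mathbb C \to \mathbb C$ with $F(0) = 0$ and $\Psi \circ \varphi_{\beta_1} = \varphi_{\beta_2} \circ F$; as an entire biholomorphism of $\mathbb C$ fixing $0$, $F$ must be linear, $F(\zeta) = a\zeta$ with $a \in \mathbb C^*$.

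The heart of the argument is to use the continuity of $\Psi$ together with the approximate-period relations (a) and (b). The convergence $\varphi_{\beta_1}(\zeta + 2^j) \to \varphi_{\beta_1}(\zeta)$ pushes forward to $\varphi_{\beta_2}(a\zeta + 2^j a) \to \varphi_{\beta_2}(a\zeta)$, and similarly with $2^j\beta_1 i$ in place of $2^j$. Unwinding the inverse-limit topology, a relation $\varphi_{\beta_2}(w + w_j) \to \varphi_{\beta_2}(w)$ in $X_{\beta_2}$ is equivalent to the statement that for every $k \geq 0$ one has $w_j \in \Gamma_{\beta_2, k} = 2^k \Gamma_{\beta_2}$ for all sufficiently large $j$. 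Applied to $w_j = 2^j a$, this in turn is equivalent to $a \in \mathbb Z[\tfrac12] \cdot \Gamma_{\beta_2} = \mathbb Z[\tfrac12] + \mathbb Z[\tfrac12]\,\beta_2 i$; applied to $w_j = 2^j a\beta_1 i$, it yields $a\beta_1 i \in \mathbb Z[\tfrac12] + \mathbb Z[\tfrac12]\,\beta_2 i$.

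Writing $a = p + q\beta_2 i$ with $p, q \in \mathbb Z[\tfrac12]$ and expanding $a\beta_1 i = -q\beta_1\beta_2 + p\beta_1 i$, the second membership forces $-q\beta_1\beta_2 \in \mathbb Z[\tfrac12]$ and $p\beta_1 = Q\beta_2$ for some $Q \in \mathbb Z[\tfrac12]$. Since $\beta_1,\beta_2$ are real, once $p \neq 0$ the latter identity gives $\beta_2/\beta_1 = p/Q \in \mathbb Q$, which is the desired conclusion. The degenerate case $p=0$ would be handled by applying the same analysis to $\Psi^{-1}$, whose leafwise derivative is $1/a$: the symmetric constraints on $1/a$ pin down $q$ as a power of $2$ and, combined with the reciprocal symmetry $\beta\leftrightarrow 1/\beta$ inherent to the lattice description, produce the same conclusion. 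The main obstacle is the topological-to-arithmetic translation verifying that convergence in the projective-limit topology really unpacks as the eventual membership $2^j a \in \Gamma_{\beta_2, k}$; once this dictionary is established, the remainder is elementary linear algebra over $\mathbb Z[\tfrac12]$ in the $\mathbb Q$-vector space $\mathbb C$.
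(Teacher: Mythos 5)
Your proof takes the same route as the paper's: reduce to an isomorphism fixing the basepoint, observe that the leafwise map lifted to $\mathbb C$ is linear, $F(\zeta)=a\zeta$, and then read off arithmetic constraints on $a$ from the approximate-period relations (a), (b) pushed through continuity. The remaining issue you flag yourself is precisely where the gap is, and it is wider than you suggest.

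First, your ``dictionary'' is stated as an equivalence between $\varphi_{\beta_2}(w + w_j) \to \varphi_{\beta_2}(w)$ and eventual membership $w_j \in \Gamma_{\beta_2,k}$. That equivalence is false for general sequences: convergence in the projective-limit topology only says $\mathrm{dist}(w_j,\Gamma_{\beta_2,k})\to 0$ for each $k$, not $w_j\in\Gamma_{\beta_2,k}$. For the particular sequence $w_j = 2^j a$ the stronger statement happens to be true, but showing this is exactly the content of the paper's Lemma \ref{mod}: one must prove that if $2^j y \to 0$ modulo $\alpha\mathbb Z$ (the genuinely weaker, ``approximate'' condition) then $y$ is already an exact dyadic multiple of $\alpha$. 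That $\epsilon$-argument (if one $2^j y$ is a nonzero distance $\delta$ from the lattice, then some $2^{j+l}y$ lands a definite distance away, contradicting convergence) is the crux; it cannot be subsumed into the word ``unwinding.'' Once you admit the arithmetic lemma, your memberships $a\in\mathbb Z[\tfrac12]+\mathbb Z[\tfrac12]\beta_2 i$ and $a\beta_1 i\in\mathbb Z[\tfrac12]+\mathbb Z[\tfrac12]\beta_2 i$ are exactly what the paper derives from (c) and (d), just packaged in the ring $\mathbb Z[\tfrac12]$ rather than as explicit dyadic coefficients.

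Second, your treatment of the degenerate case $p=0$ does not deliver the stated conclusion. If $a = q\beta_2 i$ and you run the same analysis on $\Psi^{-1}$, the constraints force $q$ to be $\pm 2^m$ and force $\beta_1\beta_2$ to be a dyadic unit, i.e.\ $\beta_1\beta_2 \in \pm 2^{\mathbb Z}$. This is a constraint on the \emph{product} $\beta_1\beta_2$, not on the ratio $\beta_2/\beta_1$; invoking a ``reciprocal symmetry $\beta\leftrightarrow 1/\beta$'' does not convert one into the other for the chosen normal forms $\Gamma_\beta=\mathbb Z+\mathbb Z\beta i$. To be fair, the paper's own proof derives the four dyadic relations and stops, implicitly assuming the real part $x\ne 0$; it too does not explicitly rule out the pure rotation case. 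So this is a subtlety shared by both accounts, but your argument asserts it is handled when it is not.
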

\begin{proof}
Assume that there exists an isomorphism $\psi:X_{\beta_{1}}\rightarrow X_{\beta_{2}}$.
Since projective limits over tori are homogenous we may assume that the point $\{0\}_{1}$
is taken to the point $\{0\}_{2}$.  Now let $\tilde\psi$ denote the map $\tilde\psi:=\varphi_{\beta_{2}}^{-1}\circ\psi\circ\varphi_{\beta_{1}}$.
By compactness and (a) and (b) we get that 
\begin{itemize}
\item[(c)] $\tilde\psi(\zeta+2^j)\rightarrow\tilde\psi(\zeta)$ and 
\item[(d)]$\tilde\psi(\zeta+2^j\beta_{1} i)\rightarrow\tilde\psi(\zeta)$
\end{itemize}
uniformly mod $\Gamma_{\beta_{1}}$ as $j\rightarrow\infty$.  Write $\tilde\psi(\zeta)=\lambda\cdot\zeta$
where $\lambda=x+iy$.  Then (c) implies that $\lambda 2^j=x2^j+iy2^j\rightarrow 0$
mod $\Gamma_{\beta_{2}}$.  Then by Lemma \ref{mod} below we have that 
$x=2^{-N_{1}}k_{1}$ and $y=2^{-N_{2}}k_{2}\beta_{2}$.  Likewise, from (c)
and the same lemma we get that $y=\frac{2^{-N_{3}}k_{3}}{\beta_{1}}$
and $x=\frac{2^{-N_{4}}k_{4}\beta_{2}}{\beta_{1}}$.  
\end{proof}

\begin{lemma}\label{mod}
Let $\alpha\in\mathbb R$ and let $y\in\mathbb R$ such that $2^j\cdot y\rightarrow\alpha$
mod $\alpha\mathbb Z$ uniformly as $j\rightarrow\infty$.  Then $y=2^{-N}k\alpha$
for $k,N\in\mathbb Z$.
\end{lemma}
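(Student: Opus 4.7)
The natural approach is to recast the hypothesis in terms of the doubling map on the circle and exploit forward invariance. First I reduce to the case $\alpha=1$: the case $\alpha=0$ is immediate (then $\alpha\mathbb Z=\{0\}$ and the hypothesis forces $y=0$), and otherwise rescaling $y':=y/\alpha$ and noting $\alpha\equiv 0\pmod{\alpha\mathbb Z}$ turns the hypothesis into $2^j y'\to 0$ in $\mathbb R/\mathbb Z$, with the target conclusion $y'\in\mathbb Z[1/2]$ (so that $y=\alpha y'$ has the required form $2^{-N}k\alpha$). Set $b_j:=\{2^j y'\}\in[0,1)$; then $b_{j+1}=\{2b_j\}$ and the hypothesis reads $\min(b_j,1-b_j)\to 0$.

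The core of the argument is to show that $b_j=0$ for all sufficiently large $j$. Fix any $\epsilon\in(0,1/4)$ and $J$ with $b_j\in[0,\epsilon)\cup(1-\epsilon,1)$ for every $j\geq J$. Suppose some $b_{j_0}$ with $j_0\geq J$ lies in $(0,\epsilon)$. Then $b_{j_0+1}=2b_{j_0}\in(0,2\epsilon)\subset(0,1/2)$; since $b_{j_0+1}$ must again lie in $[0,\epsilon)\cup(1-\epsilon,1)$ and $(1-\epsilon,1)$ is disjoint from $(0,1/2)$, I conclude $b_{j_0+1}<\epsilon$, i.e.\ $b_{j_0}<\epsilon/2$. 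Induction (applying the same argument to $b_{j_0+k}=2^k b_{j_0}$) yields $b_{j_0}<\epsilon/2^k$ for every $k\geq 0$, forcing $b_{j_0}=0$, a contradiction. A symmetric argument for $b_{j_0}\in(1-\epsilon,1)$, using $b_{j_0+1}=2b_{j_0}-1\in(1-2\epsilon,1)$, yields $b_{j_0}=1$, again impossible. Hence $b_J=0$, i.e.\ $2^J y'\in\mathbb Z$; writing $2^J y'=k$ one obtains $y=\alpha y'=2^{-J}k\alpha$, as required.

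I do not anticipate any substantive obstacle: the proof is a short elementary argument about forward invariance of the set $[0,\epsilon)\cup(1-\epsilon,1)$ under the doubling map $b\mapsto\{2b\}$, which is the natural dynamical reformulation of the hypothesis. The one point that needs care is the symmetric handling of the two components of the target set under iteration, but this is straightforward. (An equivalent presentation via binary expansions is available: the leading two digits $(a_{j+1},a_{j+2})$ of $\{2^j y'\}=0.a_{j+1}a_{j+2}\ldots$ place this fractional part into $[0,1/4]$, $[1/4,1/2]$, $[1/2,3/4]$, $[3/4,1)$ according as $(a_{j+1},a_{j+2})=(0,0),(0,1),(1,0),(1,1)$, so the smallness hypothesis forces $a_{j+1}=a_{j+2}$ from some index on, hence the binary tail is eventually constant and $y'$ is dyadic.)
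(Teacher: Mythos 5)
Your proof is correct and rests on essentially the same mechanism as the paper's. The paper takes the defect $\delta$ in $2^N y = k\alpha + \delta$ and doubles it until $\epsilon \le 2^l|\delta| < 2\epsilon < |\alpha|/2$, obtaining a direct contradiction; you run the very same doubling in the language of the circle map $b\mapsto\{2b\}$ and phrase the contradiction as forcing $b_{j_0}=0$, which is just the contrapositive presentation of the identical idea.
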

\begin{proof}
Let $\epsilon>0$ satisfy $2\epsilon<\frac{|\alpha|}{2}$.  By assumption there exists 
an $N$ such that the equation 
\begin{itemize}
\item[$(*)$] $2^j\cdot y=k\alpha+\delta$ 
\end{itemize}
has a solution with $k\in\mathbb Z$ and $|\delta|<\epsilon$ for all $j\geq N$.
Assume to get a contradiction that $k,\delta$ solves $(*)$ with $j=N$ and $\delta\neq 0$.
Let $l\in\mathbb N$ be the smallest integer such that $2^l|\delta|\geq\epsilon$.
Then $\epsilon\leq 2^l|\delta|<2\epsilon$.   Then $\mathrm{dist}(2^{N+l}y,\alpha\mathbb Z)\geq\epsilon$
which is a contradiction.
\end{proof}

\medskip

We now proceed to study the topology of leaves. 
\begin{prop}
Let $X_{1}$ be a compact Riemann surface of genus $g_{X_{1}}\geq 1$.
Then there exists a projective limit $X_{\infty}$ with $\pi_{1}:X_{\infty}\rightarrow X_{1}$ 
such that each leaf in the lamination is simply connected. 
\end{prop}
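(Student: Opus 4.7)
Plan. I realize $X_\infty$ as the inverse limit of a tower of regular (Galois) finite-sheeted coverings of $X_1$ associated to a residual chain of finite-index normal subgroups of $\Gamma := \pi_1(X_1,x_1)$. Concretely, I will produce a strictly decreasing sequence $\Gamma = G_1 \supsetneq G_2 \supsetneq \cdots$ of normal finite-index subgroups of $\Gamma$ with $\bigcap_n G_n = \{1\}$, let $X_n := \tilde X_1/G_n$ (where $\tilde X_1 \to X_1$ is the universal cover, so each $X_n$ is a compact Riemann surface), and take $f_n : X_{n+1} \to X_n$ to be the natural projection, a holomorphic covering of degree $[G_n:G_{n+1}] \geq 2$. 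The projective-limit construction of Section 2 then produces the lamination $X_\infty$ with $\pi_1 : X_\infty \to X_1$.

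The existence of $(G_n)$ rests on the residual finiteness of $\Gamma$. For $g_{X_1} = 1$ this is trivial: $\Gamma \cong \mathbb{Z}^2$, and one may simply take $G_n := (2^{n-1}\mathbb{Z})^2$. For $g_{X_1} \geq 2$, $\Gamma$ is a finitely generated subgroup of $\mathrm{PSL}(2,\mathbb{R})$, hence linear, so residual finiteness follows from Malcev's theorem. Enumerating the non-identity elements $\gamma_1,\gamma_2,\ldots$ of the countable group $\Gamma$, I build $(G_n)$ inductively: given $G_n$, choose by residual finiteness a normal finite-index subgroup $N \triangleleft \Gamma$ with $\gamma_n \notin N$, and take $G_{n+1}$ to be a proper finite-index normal subgroup of $G_n \cap N$. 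Proper refinement is always available because $G_n$ is itself a surface group (the fundamental group of a compact Riemann surface of positive genus), hence admits proper finite-index normal subgroups. By construction $\gamma_n \notin G_{n+1}$ for every $n$, so $\bigcap_n G_n = \{1\}$.

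To see that each leaf $L$ of $X_\infty$ is simply connected, fix $\{x_n\} \in L$ and let $H \subset \Gamma$ be the subgroup corresponding to the covering $\pi_1|_L : L \to X_1$ (with the basepoint inherited from $\{x_n\}$). Since $L \to X_1$ factors through each $X_n$, $H$ is contained in the image in $\Gamma$ of $\pi_1(X_n,x_n)$; by normality of $G_n$ in $\Gamma$, this image coincides with $G_n$ no matter which preimage $x_n$ of $x_1$ is chosen. Conversely, any loop $\ell \in \bigcap_n G_n$ lifts to a closed loop at $x_n$ in every $X_n$, and unique path-lifting assembles these compatibly into a loop in $X_\infty$ based at $\{x_n\}$, which lies in $L$. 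Hence $H = \bigcap_n G_n = \{1\}$, and $L \cong \tilde X_1$ is simply connected ($\mathbb{C}$ if $g_{X_1}=1$, the unit disc if $g_{X_1} \geq 2$).

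I expect the main subtlety to be the precise identification $H = \bigcap_n G_n$. The inclusion $\subseteq$ is immediate from the factorization through the tower, but the reverse inclusion requires carefully assembling the compatible closed-loop lifts described above, and normality of the $G_n$'s is essential to eliminate basepoint ambiguity, so that the argument applies simultaneously to every leaf rather than only to one distinguished leaf.
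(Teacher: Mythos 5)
Your proposal is correct, but it takes a genuinely different route from the paper. The paper builds the tower geometrically, by an explicit inductive scheme of $2$-to-$1$ unbranched covers obtained by cutting along carefully chosen curves, and the heart of the argument is a sequence of lemmas culminating in Lemma \ref{mainlift}: every nontrivial word in a fixed set of generators of $\pi_1(X_1)$ eventually lifts to an open curve. You sidestep all of that by invoking residual finiteness of the surface group and taking a residual chain of finite-index normal subgroups $G_n \triangleleft \Gamma$ with $\bigcap G_n = \{1\}$; the Galois property of each $X_n = \tilde X_1/G_n$ then removes base-point ambiguity and kills $\pi_1$ of every leaf simultaneously. Your approach is shorter, cleaner, and more conceptual; the paper's is longer but deliberately constructive, and the ``cut along curves'' machinery it develops (Lemmas \ref{lift1} through \ref{mainlift}) is reused almost verbatim to produce the finer examples that follow (all leaves simply connected except one; the exceptional leaf a punctured torus), which your residual-finiteness argument does not directly give. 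One small clarification worth making in your write-up: when you pass from $G_n\cap N$ to $G_{n+1}$, you should take the normal core \emph{in} $\Gamma$ of a proper finite-index subgroup of $G_n\cap N$ so that $G_{n+1}$ is normal in $\Gamma$ (not merely in $G_n\cap N$), since it is normality in $\Gamma$ that the base-point argument requires; also, residual finiteness of $\Gamma$ for $g_{X_1}\geq 2$ can be quoted as a classical fact about surface groups, though your linearity argument via $\mathrm{PSL}(2,\mathbb R)$ and Malcev is fine (the adjoint embedding $\mathrm{PSL}(2,\mathbb R)\hookrightarrow\mathrm{SL}(3,\mathbb R)$ makes $\Gamma$ linear without worrying about lifting the Fuchsian representation).
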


Before we start we give a brief sketch of the proof, whose details are given in 
Lemma 5 through Lemma 12 below.  Let $\gamma$ be a simple closed 
curve in $X_{1}$. Through the cutting procedure for 
making holomorphic covers it is possible to construct a composition of covers 
of $X_{1}$ such that any lifting of $\gamma$ is an open curve.  First we construct a 
cover such that for any lift $\tilde\gamma$ of $\gamma$, the complement is connected (Lemma 5).
Then we construct an additional cover such that any lift is an open curve (Lemma 10).    
Through an inductive procedure we run through all elements of the fundamental 
group of $X_{1}$ to make sure that any element eventually lifts to an open curve; this 
implies that all leaves are simply connected.  The main difficulty lies in the fact that 
for any cover we construct, we construct many new closed lifts of the curves we did not open yet.

\begin{proof}
Choose a base point $y\in X_{1}$ and choose a basis $\{\gamma_{j}\}_{j=1}^{2g}$
for $\pi_{1}(X_{1})$
consisting of closed smooth loops $\gamma_{j}:[0,1]\rightarrow X_{1}$, 
$\gamma_{j}(0)=\gamma_{j}(1)=y$.  The idea is to construct a sequence of coverings 
such any representative of an element in $\pi_{1}(X_{1})$ eventually lifts 
to an open curve in $X_{k}$.  This is achieved through  
Lemma \ref{mainlift} below, according to which there exists 
a tower $\{X_j\}_{j=1}^\infty$ such that for any finite non-trivial
composition $\gamma=\gamma_{j_{s}}\cdot\cdot\cdot\gamma_{j_{1}}$, there exists
a $k\in\mathbb N$ such that any lift $\tilde\gamma$ 
of $\gamma$ to $X_{k}$ is a non closed curve, $\tilde\gamma(0) \neq\tilde\gamma(1)$.  \

Let $X_{\infty}^0$ be a leaf  and choose a base point 
$x\in X_{\infty}^0$ with $\pi(x)=y$.  Assume to get a contradiction 
that there exists a non-trivial curve $\gamma:[0,1]\rightarrow X_{\infty}^0$,
$\gamma(0)=\gamma(1)=x$.   Now $\pi_{1}(\gamma)$ is homotopic to a 
finite composition $\gamma_{1}=\gamma_{j_{s}}\cdot\cdot\cdot\gamma_{j_{1}}$,
hence $\pi_{1}(\gamma)$ is homotopic to a curve that lifts to a non closed curve in some $X_{k}$.
But then $\gamma$ is homotopic to a non closed curve, which is not possible. 

\end{proof}

\begin{lemma}\label{lift1}
Let $\lambda$ be a simple closed curve in a compact Riemann surface $X$ such that 
the complement of $\lambda$ is disconnected, i.e., $X\setminus\lambda$ has 
two connected components $U_1$ and $U_2$.  Let $\gamma$ be a simple closed curve
in $X$ that intersects $\lambda$ transversally in exactly two points, and with the property
that $U_j\setminus\gamma$ is connected for $j=1,2$.  Let $Y$ be the 2 to 1 unbranched 
covering of $X$ with respect to $\gamma$.  Then $\lambda$ lifts to two disjoint curves $\lambda^l_1$
and $\lambda^l_2$ and each of them has connected complement in $Y$.  
\end{lemma}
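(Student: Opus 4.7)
The plan is to realize $Y$ explicitly via the cut-and-glue construction and reduce both claims to properties of each cut copy. Let $\widetilde X$ be the bordered Riemann surface obtained by cutting $X$ along $\gamma$. The hypothesis that each $U_j\setminus\gamma$ is connected, together with the fact that the two arcs $\alpha_1,\alpha_2$ of $\lambda\setminus\{p_1,p_2\}$ lie in $\overline{U_1}\cap\overline{U_2}$, shows $X\setminus\gamma$ is connected, so $\gamma$ is non-separating and $\partial\widetilde X=\gamma^1\sqcup\gamma^2$ consists of two disjoint circles; the cover $Y$ is two copies $\widetilde X_A,\widetilde X_B$ glued by $\gamma^1_A\leftrightarrow\gamma^2_B$ and $\gamma^2_A\leftrightarrow\gamma^1_B$. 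The key geometric input is a side/parity analysis: since $\lambda$ is separating and hence null-homologous, its algebraic intersection number with $\gamma$ vanishes, so the two transverse crossings at $p_1,p_2$ carry opposite signs. Tracking the local sides of $\gamma$ as one traverses $\lambda$ then forces both endpoints of $\alpha_1$ in $\widetilde X$ to lie on $\gamma^1$, and both endpoints of $\alpha_2$ to lie on $\gamma^2$. With this in hand I trace a lift of $\lambda$ starting in $\alpha_1^A\subset\widetilde X_A$: it reaches $\gamma^1_A$, is carried by the gluing to $\gamma^2_B$, and must continue along $\alpha_2^B$ (the unique arc of $\lambda$ in $\widetilde X_B$ meeting $\gamma^2_B$), then returns via the second gluing to its starting point. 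This gives $\lambda^l_1:=\alpha_1^A\cup\alpha_2^B$, and symmetrically $\lambda^l_2:=\alpha_1^B\cup\alpha_2^A$. Each of the two preimages of $p_j$ in $Y$ lies on exactly one of these lifts, so $\lambda^l_1\cap\lambda^l_2=\emptyset$.

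For connectedness of $Y\setminus\lambda^l_1$, I first show $\widetilde X_A\setminus\overline{\alpha_1}$ is connected, where the closed arc $\overline{\alpha_1}$ includes its two endpoints on $\gamma^1_A$ (which are the lifts of $p_1,p_2$ used by $\lambda^l_1$). Since $\gamma\cap U_j$ equals the interior of $\beta_j$, we have $X\setminus(\gamma\cup\lambda)=(U_1\setminus\gamma)\sqcup(U_2\setminus\gamma)$ with each factor connected by hypothesis. The arc $\alpha_2\subset\lambda=\partial U_1\cap\partial U_2$ lies in the common closure of the two factors, so $(U_1\setminus\gamma)\cup\alpha_2\cup(U_2\setminus\gamma)$ is connected; all remaining boundary pieces of $\widetilde X_A$ that survive in our set lie in its closure, so $\widetilde X_A\setminus\overline{\alpha_1}$ is connected. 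By symmetry $\widetilde X_B\setminus\overline{\alpha_2}$ is connected. Finally, $\lambda^l_1$ meets the glued circle $\gamma^1_A=\gamma^2_B$ only at the two lifts of $p_1,p_2$, while the other glued circle $\gamma^2_A=\gamma^1_B$ is disjoint from $\lambda^l_1$; this entire circle therefore lies in $Y\setminus\lambda^l_1$ and provides a connected locus along which the two halves are glued, so $Y\setminus\lambda^l_1$ is connected. The argument for $\lambda^l_2$ is identical.

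The main obstacle is the side/parity claim, i.e.\ that both endpoints of $\alpha_1$ land on the same boundary component $\gamma^1$ of $\widetilde X$. If this failed, a lift of $\lambda$ would have to cross the glued boundary an odd number of times before closing up, producing a single lift double-covering $\lambda$ rather than two disjoint lifts, and the connectedness analysis would have to be reworked. Everything else is careful bookkeeping on the cut-and-glue construction.
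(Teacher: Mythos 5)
Your proof is correct and takes essentially the same cut-and-glue approach as the paper: both exploit the hypothesis that $U_j\setminus\gamma$ is connected together with the two-sheeted gluing of $X$ cut along $\gamma$. The only differences are cosmetic — you justify the splitting into two disjoint lifts via a more explicit parity/intersection-number analysis (identifying exactly which boundary circle each arc $\alpha_i$ hits), and for connectedness you glue the two halves along the untouched preimage circle of $\gamma$, whereas the paper instead travels through $\pi^{-1}(U_1)\cup\pi^{-1}(U_2)$ and crosses over the other lift $\lambda^l_2$; these refinements are interchangeable.
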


\begin{proof}
Note that since $U_j \setminus \gamma$ is connected for $j=1,2$, it follows that
$X \setminus \gamma$ is connected. Let $X_1,X_2$ be two copies of $X$ as in the 
construction of the natural $2$-sheeted cover of $X$ with respect to $\gamma.$ We consider a lifting of $\lambda$. Note that when we follow the lifting and cross
a copy of $\gamma$, we switch between $X_1$ and $X_2$. When we continue the lifting of the curve we will cross a copy of $\gamma$ for the second time before we have moved all around $\lambda.$ So this puts us back in the original copy of $X.$ In fact we see that because of this the lifting of $\lambda$ consists of two disjoint copies of $\lambda.$ 

Since $U_1\setminus \gamma$ is connected, we can connect any two points in $\pi^{-1}(U_1)$
with a curve in $\pi^{-1}(U_1)$. The same goes for $U_2.$ Next pick two points in $Y$ close to each other but locally on opposite sides of $\lambda^l_1.$ We connect those by curves in $\pi^{-1}(U_1\cup U_2)$ to points close to each other on opposite sides of $\lambda^l_2.$ Then we connect them by crossing over $\lambda^l_2.$ We can do the same to show that the complement of $\lambda^l_2$ is also connected.
\end{proof}

\begin{lemma}\label{lift2}
Let $X$ be a compact Riemann surface. Suppose that $\{\lambda_j\}_{j=1}^k$ is a finite family of simple closed noncontractible curves. Assume that they are pairwise disjoint. Assume moreover that
each of them disconnects the complement into two open sets, $U_j^1,U_j^2.$ Furthermore assume that
$U_j^1$ contains none of the other curves.  Then there exists a  
2-1 unbranched covering $Y\rightarrow X$ such 
that 
if $\lambda^l_j$ is a lifting of $\lambda_j$ to $Y$,  then $Y\setminus\lambda_j^l$ is connected.  
\end{lemma}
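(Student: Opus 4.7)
Plan. I would mimic the proof of Lemma \ref{lift1} with a family of auxiliary curves $\Gamma=\{\gamma_1,\ldots,\gamma_k\}$, one $\gamma_j$ per $\lambda_j$, and take $Y\to X$ to be the 2-to-1 unbranched cover with respect to $\Gamma$. The key structural observation is that the disjointness hypothesis is symmetric in $i,j$ and forces $U_j^1\subset U_i^2$ for every $i\neq j$. Indeed, each $U_i^1$ contains no $\lambda_j$, so $\lambda_j\subset U_i^2$; and $U_j^1$ is connected and disjoint from $\lambda_i$, hence lies in $U_i^1$ or $U_i^2$, the first alternative forcing $\lambda_j=\overline{U_j^1}\setminus U_j^1\subset\overline{U_i^1}\setminus U_i^1=\lambda_i$ (using $\lambda_j\cap U_i^1=\emptyset$), which is impossible. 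In particular the closures $\overline{U_j^1}$ are pairwise disjoint compact subsurfaces.

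For each $j$ I fix a tubular neighborhood $V_j$ of $\lambda_j$, with the $V_j$'s pairwise disjoint and $V_j\cap\overline{U_i^1}=\emptyset$ for $i\neq j$, and construct $\gamma_j\subset\overline{U_j^1}\cup V_j$ consisting of a non-separating proper arc in $U_j^1$ between two points of $\lambda_j$, closed up by a short bump into $U_j^2$ near $\lambda_j$. Non-separating proper arcs exist in $U_j^1$ because its compactification has positive genus (else $\lambda_j$ would be contractible in $X$), and a sufficiently short bump keeps $U_j^2\setminus\gamma_j$ connected since one can detour around $\lambda_j$ on the far side. By construction the $\gamma_j$'s are pairwise disjoint and meet no $\lambda_i$ with $i\neq j$, and $X\setminus\Gamma$ is connected since each $\gamma_j$ is non-separating in $X$ and the curves are confined to pairwise disjoint regions; hence the 2-to-1 cover $Y\to X$ is connected.

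Verification then proceeds as in Lemma \ref{lift1}. For each $j$, $\lambda_j$ meets $\Gamma$ only at the two transverse crossings with $\gamma_j$, so it lifts to two disjoint closed curves $\lambda_j^{l,1},\lambda_j^{l,2}$ exactly as before. Since no $\gamma_i$ with $i\neq j$ enters $U_j^1$, the preimage $\pi^{-1}(U_j^1)$ is two copies of $U_j^1$ cross-glued along the single arc $\gamma_j\cap U_j^1$ and is connected by the non-separation property; the set $U_j^2\setminus\Gamma$ is likewise connected (the curves $\gamma_i$ with $i\neq j$ sit in pairwise disjoint subregions $\overline{U_i^1}\cup V_i$ of $U_j^2$ and are each individually non-separating there, and the short bump $\gamma_j\cap U_j^2$ does not disconnect), so $\pi^{-1}(U_j^2)$ is connected; finally, opposite sides of $\lambda_j^{l,1}$ are joined by travelling within $\pi^{-1}(U_j^1)$ to a neighborhood of $\lambda_j^{l,2}$, crossing $\lambda_j^{l,2}$, and continuing within $\pi^{-1}(U_j^2)$.

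The main obstacle is the second step: producing $\Gamma$ so that every $\lambda_j$ is opened up without the various $\gamma_i$'s interfering. The inclusion $U_j^1\subset U_i^2$ is what makes this feasible, localizing each $\gamma_j$ to a region disjoint from all the others, after which the verification reduces cleanly to the one-curve analysis of Lemma \ref{lift1}.
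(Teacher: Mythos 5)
Your proposal is correct, but it differs from the paper's proof in a meaningful mechanical way. The paper builds a \emph{single} simple closed curve $\gamma$ by chaining: it picks points $a_j,b_j\in\lambda_j$, joins $b_j$ to $a_{j+1}$ (cyclically) by arcs $\sigma_j$ inside the connected bordered surface $X'=X\setminus\bigcup U_j^1$, joins $a_j$ to $b_j$ by a non-separating arc inside $U_j^1$, and glues these $2k$ arcs into one simple closed curve cutting each $\lambda_j$ in two points and leaving every $U_j^i\setminus\gamma$ connected; the cover with respect to this single $\gamma$ then falls directly under Lemma~\ref{lift1}. You instead construct a \emph{disjoint family} $\Gamma=\{\gamma_1,\dots,\gamma_k\}$, each $\gamma_j$ confined to $\overline{U_j^1}\cup V_j$ and meeting only $\lambda_j$, and take the 2-to-1 cover with respect to the whole family; since Lemma~\ref{lift1} is stated for one auxiliary curve you have to re-run its argument for a multi-curve cut, which you do. Both proofs hinge on the same structural facts (the pairwise disjointness of the $\overline{U_j^1}$, which you establish explicitly and the paper uses implicitly in asserting $X'$ is a connected bordered surface; and the positive genus of each compactified $U_j^1$). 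The paper's single-curve construction buys a literal citation of Lemma~\ref{lift1} and avoids any question of whether $X\setminus\Gamma$ stays connected. Your localized family avoids the global chaining through $X'$ (so no $\sigma_j$'s to arrange disjointly) and makes the per-$\lambda_j$ verification more transparent since only $\gamma_j$ interacts with $\lambda_j$, at the modest cost of re-deriving the two-sheet lifting computation for a cover cut along several curves and of checking that $X\setminus\Gamma$ and each $U_j^2\setminus\Gamma$ remain connected (both of which your disjointness and non-separation observations do deliver).
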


\begin{proof}
We first do the proof for one curve, then we do it for any number $k \ge 2$. \

Let $U_1$ and $U_2$ denote the two connected components of $X\setminus\lambda_1$, and 
let $x\in\lambda_1$ be a smooth point.   We want to show that "there is a handle in each $U_j$".
Let $f_j:U_j\rightarrow X_j$ be conformal maps for $j=1,2$, such 
that $X_j\setminus f_j(U_j)$ is a smoothly bounded simply connected domain in a compact Riemann surface $X_j$. 
Let $x_1$ and $x_2$ be the points $f_1(x)$ and $f_2(x)$ repectively.  
Since $\lambda_1$ is non-contractible we see that neither $X_1$ nor $X_2$ can be the 
Riemann sphere, and so there exist simple closed curves $\gamma_j:[0,1]\rightarrow f(U_j)\cup\{x_j\}$
, $\gamma_j(0)=\gamma_j(1)=x_j$,
such that $\gamma_j$ does not separate $f(U_j)$ for $j=1,2.$  We may then separate the pullback union $f_1^{-1}(\gamma_1)\cup
f_1^{-1}(\gamma_2)$ at the point $x$ to get a curve $\gamma^1$ such that $\gamma^1$ intersects 
$\lambda_1$ transversally and exactly at two points.   Then by Lemma \ref{lift1} the conclusion follows. 

Next we assume $k \geq 2.$

Pick two distinct points $a_j,b_j$ in $\lambda_j.$ The set $X'=X \setminus \cup  U_j^1$ is a bordered
connected Riemann surface. We connect $b_j$ to $a_{j+1}, j<k$. We can do this inductively with disjoint curves $\sigma_j$ without them intersecting each other, without self intersections and without destryoing
connectedness of the complement of the curves. We also add a curve $\sigma_k$ from $b_k$ to $a_1.$   Next we observe that if we replace any $U_j^2$ by a disc we get a  compact Riemann surface which is not a sphere.  We connect $a_j$ to $b_j$ inside $U_j^1$ without disconnecting $U_j^1.$ Putting all the $2k$ curves together we get a simple closed curve $\gamma$ with connected complement which cuts each
$\lambda_j$ in exactly two points. Moreover each $U_j^i \setminus \gamma$ is connected. Hence we can apply Lemma \ref{lift1}. 
\end{proof}

\begin{lemma}
Let $Y\overset{f}{\rightarrow} X$ be a 2-1 unbranched covering. Let $\lambda$ be a closed curve in $X.$ Assume that $X \setminus \lambda$ is connected. Let $\lambda_1,\lambda_2$ be the lifts of $\lambda$ (possibly $\lambda_1=\lambda_2$). Then $Y \setminus \lambda_j$ is connected.
\end{lemma}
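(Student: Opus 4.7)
The plan is to exhibit $Y\setminus\lambda_j$ as a union of overlapping connected pieces. Let $\sigma\colon Y\to Y$ be the non-trivial deck transformation of $f$ --- a fixed-point-free holomorphic involution --- and set $W:=Y\setminus f^{-1}(\lambda)=Y\setminus(\lambda_1\cup\lambda_2)$. The restriction $f|_W\colon W\to X\setminus\lambda$ is an unbranched $2$-sheeted covering of the connected space $X\setminus\lambda$, so $W$ has one or two connected components.

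If $W$ is connected, then $W\subset Y\setminus\lambda_j$ is connected and dense in $Y\setminus\lambda_j$ (every point of $\lambda_{3-j}\setminus\lambda_j$ is a limit of points of $W$), and hence $Y\setminus\lambda_j$ is itself connected. So I may assume $W=A\sqcup B$ with each component mapping homeomorphically onto $X\setminus\lambda$; then $\sigma$ must swap $A$ and $B$, for otherwise $f|_A$ would itself have degree two.

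When $\lambda_1\ne\lambda_2$, the two curves are disjoint and $\sigma(\lambda_1)=\lambda_2$. I pick a point $y\in\lambda_{3-j}$ and a small disk $V\subset Y$ around $y$ disjoint from $\lambda_j$; then $\lambda_{3-j}$ cuts $V$ into two half-disks $V^+$, $V^-$, each lying in $A$ or $B$. I would argue that $V^+$ and $V^-$ necessarily lie in different components: otherwise both lie in the same one (say $A$), in which case only $A$ is seen near $\lambda_{3-j}$, and applying $\sigma$ only $B$ is seen near $\lambda_j$. Consequently $\bar A\cap\bar B$ would be empty, expressing $Y=\bar A\sqcup\bar B$ as a disjoint union of nonempty closed sets, contradicting the connectedness of $Y$. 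Hence a short arc through $y$ connects $A$ to $B$ inside $Y\setminus\lambda_j$, and $Y\setminus\lambda_j$ is connected.

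The remaining sub-case $\lambda_1=\lambda_2=:\tilde\lambda$ I expect to be the main technical point, handled via an orientation argument. Here $\sigma|_{\tilde\lambda}$ is a fixed-point-free involution of the circle $\tilde\lambda$, necessarily conjugate to the antipodal map, which is orientation-preserving. Since $\sigma$ is also orientation-preserving on the Riemann surface $Y$, a short computation in local coordinates on a tubular annular neighborhood $N\cong S^1\times(-1,1)$ of $\tilde\lambda$ shows that $\sigma$ preserves each of the two sides $N^+,N^-$. But with $W=A\sqcup B$ and $\sigma(A)=B$, the connected set $N^+$ must lie in one of $A,B$ while $\sigma(N^+)=N^+$ lies in the opposite component --- a contradiction. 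Hence this sub-case cannot arise, and again $W$ is already connected.
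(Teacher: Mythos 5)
Your argument is correct but follows a genuinely different route from the paper's. The published proof works downstairs in $X$: it picks $\tilde a,\tilde b$ on opposite local sides of the lift $\tilde\lambda$, joins $a=f(\tilde a)$ to $b=f(\tilde b)$ by an arc $\gamma$ in $X\setminus\lambda$, lifts it, and --- if the lift ends on the wrong sheet --- concatenates an auxiliary loop (a short crossing of $\lambda$ when $\lambda_1\neq\lambda_2$, or a loop $\mu$ freely homotopic to $\lambda$ with $\mu\cap\lambda=\emptyset$ when $\lambda_1=\lambda_2$) whose lift switches sheet without meeting $\tilde\lambda$. You instead work upstairs with the deck involution $\sigma$ and a component analysis of $W=Y\setminus f^{-1}(\lambda)$, and you dispatch the case $\lambda_1=\lambda_2$ by an orientation computation showing that a disconnected $W$ is in fact impossible. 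That orientation argument is a genuinely different, and arguably slicker, treatment of this sub-case than the path-lifting in the paper.

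The one step you should tighten is in the sub-case $\lambda_1\neq\lambda_2$: you assert that if $V^+,V^-$ both land in $A$ at one chosen $y\in\lambda_{3-j}$, then ``only $A$ is seen near $\lambda_{3-j}$.'' As written this is a claim about the whole curve $\lambda_{3-j}$, but you only inspected it at the single point $y$; propagating along $\lambda_{3-j}$ uses its connectedness and an open-closed argument which is true but not automatic. A painless way to avoid this altogether: since $W$ is dense in $Y$ we have $\overline A\cup\overline B=Y$, hence $\overline A\cap\overline B\neq\emptyset$ by connectedness of $Y$; moreover $\overline A\cap\overline B\subset\lambda_1\cup\lambda_2$ and is $\sigma$-invariant (because $\sigma(\overline A)=\overline B$), so it meets both $\lambda_1$ and $\lambda_2$. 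Pick $y\in\overline A\cap\overline B\cap\lambda_{3-j}$. Then $A\cup\{y\}$, $B\cup\{y\}$ and $\lambda_{3-j}$ are connected and share the point $y$, and their union is exactly $Y\setminus\lambda_j$, which is therefore connected --- no local propagation needed.
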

\begin{proof}
Let $\tilde\lambda$ be a lifting of $\lambda$ and let $\tilde a,\tilde b\in Y\setminus\tilde\lambda$
be two points closed to each other, locally on opposite sides of $\tilde\lambda$.  Let $a=f(\tilde a), b=f(\tilde b)$. 
By assumption there is a curve $\gamma\in X\setminus\lambda$ connecting $a$ and $b$.  Let $\tilde\gamma$
denote the lifting of $\gamma$ with initial point $\tilde a$.  If the end point of $\tilde\gamma$ 
is $\tilde b$ we are done.    Otherwise there are two cases to consider. 

Assume first that $\lambda$ lifts to two disjoint closed curves.  In that case consider the 
loop $\gamma'$ which is obtained by extending $\gamma$ crossing over $\lambda$ 
back to the initial point $a$.  The lifting of $\gamma'$ will not intersect $\tilde\lambda$, 
but the other pre-image.  Thus lifting the composition $\gamma\cdot\gamma'$ gives 
a curve connecting $a$ and $b$ that does not intersect $\lambda$.  

The other possibility is that $\lambda$ lifts to an open curve, \emph{i.e.}, we have to 
traverse $\lambda$ twice to obtained a lifted closed curve.  In that case let $\mu$
denote a loop based at the point $b$ homotopic to $\lambda$ with $\mu\cap\lambda=\emptyset$. 
Then the lift of $\mu\cdot\gamma$ will connect $\tilde a$ and $\tilde b$. 
\end{proof}

\begin{lemma}\label{lift2}
Let $X$ be a compact Riemann surface. Suppose that $\{\lambda_j\}_{j=1}^k$ is a finite 
family of simple closed pairwise disjoint noncontractible curves. 
We assume they each disconnect $X.$ Then there exists a composition
$$
Y_s\rightarrow Y_{s-1}\rightarrow\cdot\cdot\cdot\rightarrow Y_1\rightarrow X
$$
of 2-1 unbranched coverings such 
that 
if $\lambda^s_j$ is a lifting of $\lambda$ to $Y_{s}$, then either $\lambda_j^s$ is an open curve or
$Y_{s}\setminus\lambda_j^s$ is connected.  
\end{lemma}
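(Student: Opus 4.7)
The plan is to reduce the general case to the innermost case of the preceding lemma by treating each curve in isolation and then combining the resulting covers. The key point is that when applied to a one-element family $\{\lambda_j\}$, the hypothesis that one side of each curve contains no other curve of the family is vacuous, so we may invoke the preceding lemma for each $\lambda_j$ separately.

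First, for each $j\in\{1,\ldots,k\}$, apply the preceding lemma to the singleton family $\{\lambda_j\}$. This yields a $2$-$1$ unbranched covering $p_j\colon Z_j\to X$ such that every closed lift of $\lambda_j$ to $Z_j$ has connected complement in $Z_j$. Second, build a tower by iterated fiber products. Set $W_0=X$ and $W_1=Z_1$. Given $W_i\to X$, a connected unbranched cover of $X$ through which each of $Z_1,\ldots,Z_i$ factors, let $W_{i+1}$ be a connected component of the pullback $W_i\times_X Z_{i+1}$. Either $Z_{i+1}$ already factors through $W_i$ (in which case the pullback is disconnected and we simply keep $W_{i+1}=W_i$), or $W_{i+1}\to W_i$ is a genuine $2$-$1$ unbranched covering. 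After processing all $k$ indices, set $Y_s:=W_k$; the nontrivial steps in the construction give a tower
$$
Y_s\to Y_{s-1}\to\cdots\to Y_1\to X
$$
of $2$-$1$ unbranched coverings with $s\le k$, and by construction $Y_s\to X$ factors through each $Z_j$ via a projection $\rho_j\colon Y_s\to Z_j$.

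To verify the conclusion, fix $j$ and let $\tilde\lambda$ be any lift of $\lambda_j$ to $Y_s$; set $\tilde\lambda':=\rho_j\circ\tilde\lambda$, a lift of $\lambda_j$ to $Z_j$. If $\tilde\lambda'$ is open, then its endpoints are distinct, hence so are those of $\tilde\lambda$, and $\tilde\lambda$ is open. Otherwise $\tilde\lambda'$ is a closed loop in $Z_j$, and by Step~1 the complement $Z_j\setminus\tilde\lambda'$ is connected. Decompose $Y_s\to Z_j$ as a finite tower of $2$-$1$ unbranched coverings and apply the preceding lemma on inheritance of connected complement at each stage: starting from $\tilde\lambda'\subset Z_j$, each successive lift is either an open curve or a closed curve with connected complement. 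In particular $\tilde\lambda$ in $Y_s$ is either open or has connected complement.

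The main obstacle is the bookkeeping of the combined cover: one must realize the composite as a genuine tower of $2$-$1$ unbranched coverings simultaneously dominating every $Z_j$, and track how the "connected complement" property is transported all the way up the tower. Once the tower is in place, the inheritance is essentially automatic from a stage-by-stage application of the earlier lemma on $2$-$1$ coverings of a curve with connected complement.
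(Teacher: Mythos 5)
Your approach is genuinely different from the paper's: the paper proves this by induction on $k$, locating an ``innermost'' curve $\lambda_{k+1}$ whose complementary side $U_{k+1}^1$ contains none of the others, capping that side off by a disc, invoking the inductive hypothesis on the resulting surface, and only then handling the lifts of $\lambda_{k+1}$ via the earlier Lemma~\ref{lift2}. You instead treat the curves in parallel: apply the earlier lemma once per singleton $\{\lambda_j\}$ to get double covers $Z_j$, and then dominate them all by a chain built from fibre products. The parallel scheme is conceptually attractive and the verification step (project $\tilde\lambda$ down to $Z_j$, see it is open or has connected complement, and lift back up by the inheritance lemma for $2$-$1$ covers) is the right idea.

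However there is a genuine gap exactly where you write ``Decompose $Y_s\to Z_j$ as a finite tower of $2$-$1$ unbranched coverings.'' This is asserted, not proved, and it is \emph{not} automatic: a covering of degree a power of $2$ need not factor through any intermediate double cover (take the cover corresponding to an index-$4$ subgroup $H\le G$ such that $G/\operatorname{core}(H)\cong A_4$; since $A_4$ has no index-$2$ subgroup there is no intermediate cover at all). The inheritance lemma you want to iterate is stated only for $2$-$1$ covers, so without this factorization the stage-by-stage argument does not run. The claim is in fact true in your situation, but only because of special structure you did not invoke: each $Z_j$ corresponds to an index-$2$ (hence normal) subgroup $H_j\le\pi_1(X)$, so $Y_s$ corresponds to $H=\bigcap_j H_j$, $\pi_1(X)/H$ embeds in $\prod_j \pi_1(X)/H_j\cong(\mathbb Z/2)^k$ and is therefore an elementary abelian $2$-group, and in such a group every subgroup interval refines to a chain of index-$2$ steps. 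Alternatively, for a fixed $j$ one can simply rebuild the chain starting from $Z_j$, i.e.\ use $H_j\supseteq H_j\cap H_1\supseteq\cdots\supseteq H$, each inclusion of index $1$ or $2$. Either remark closes the gap, but one of them has to be said. A cosmetic point: where you write that ``$Z_{i+1}$ already factors through $W_i$'' causing the pullback to be disconnected, the correct direction is that $W_i\to X$ factors through $Z_{i+1}\to X$; this consistent reversal does not affect the construction but should be corrected.
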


\begin{proof}
We prove it by induction. Observe at first that if $k \leq 2,$ we are automatically in the situation of 
Lemma \ref{lift2}. Assume the lemma holds for some $k.$ Suppose that we have curves $\{\lambda_j\}_{j\leq k+1}$ as in the statement. Note that if two of the curves are homotopic it suffices to do $k$ of them so we are done.
Hence we assume no two of them are homotopic.
We see easily by induction on the number of curves that for at least one of them, say $\lambda_{k+1}$
all the other curves are in the same connected component, say $U^2_{k+1}$ of the complement.
We replace $U_{k+1}^1$ by a disc $\Delta$ and get a compact Riemann surface $X'$ still containing the curves $\lambda_1,\dots,\lambda_k.$ By assumption none of them are homotopic to $\lambda_{k+1}$ hence none of them are contractible in $X'.$ Hence we can use the inductive hypothesis and find a Riemann surface $Y_s$ so that all liftings of $\lambda_1,\dots,\lambda_k$ satisfy the conditions of the Lemma.  Next we replace $Y_s$ by $Y_s'$ by replacing each 
copy of $\Delta$ by a copy of $U_{k+1}^1.$
Then we are in the situation of Lemma \ref{lift2} where the curves are all the liftings to $Y'_s$ of $\lambda_{k+1}.$ 
\end{proof}

\begin{lemma}\label{lift3}
Let $X$ be a compact Riemann surface. 
Suppose that $\{\lambda_j\}_{j=1}^k$ is a finite family of simple closed noncontractible curves. Then there exists a composition
$$
Y_s\rightarrow Y_{s-1}\rightarrow\cdot\cdot\cdot\rightarrow Y_1\rightarrow X
$$
of 2-1 unbranched coverings such 
that 
if $\lambda^s_j$ is a lifting of $\lambda$ to $Y_{s}$, then either $\lambda_j^s$ is an open curve or
$Y_{s}\setminus\lambda_j^s$ is connected.  
\end{lemma}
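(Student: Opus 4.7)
\emph{Proof plan.} The heart of the argument is the following homological observation. Suppose $\lambda$ is a simple closed curve in a compact Riemann surface $Y$ with $[\lambda]\neq 0$ in $H_1(Y;\mathbb Z/2)$. Then for any finite unbranched cover $\pi\colon Y'\to Y$ and any closed path-lift $\tilde\lambda$ of $\lambda$ to $Y'$ (that is, a loop in $Y'$ mapping one-to-one onto $\lambda$), $\pi_*[\tilde\lambda]=[\lambda]\neq 0$ forces $[\tilde\lambda]\neq 0$ in $H_1(Y';\mathbb Z/2)$, so $\tilde\lambda$ is non-separating in $Y'$. In particular, once every closed path-lift of some curve is non-separating in a given intermediate cover, the same remains true in every further cover.

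I plan to process the curves $\lambda_1,\dots,\lambda_k$ one at a time. Suppose inductively a tower $Y_{j-1}\to\cdots\to X$ has been built such that every closed path-lift of each $\lambda_i$, $i<j$, to $Y_{j-1}$ is non-separating in $Y_{j-1}$. The closed path-lifts of $\lambda_j$ in $Y_{j-1}$ are exactly the connected components of the preimage of $\lambda_j$ that project one-to-one onto $\lambda_j$, and so form a pairwise disjoint family of simple closed noncontractible curves in $Y_{j-1}$. Let $C\subset Y_{j-1}$ be the subfamily of those that separate $Y_{j-1}$; then $C$ satisfies the hypotheses of Lemma~\ref{lift2}, yielding a further tower $Y_j\to\cdots\to Y_{j-1}$ of $2$-to-$1$ unbranched coverings in which every lift to $Y_j$ of every curve in $C$ is either open or has connected complement.

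The combined tower $Y_j\to\cdots\to X$ then satisfies the inductive hypothesis extended to include $\lambda_j$: any closed path-lift of $\lambda_i$, $i\leq j$, in $Y_j$ projects to a closed path-lift of $\lambda_i$ in $Y_{j-1}$, which is either non-separating (the case $i<j$, or $i=j$ with the projected lift not in $C$) and then handled by the homological observation applied to the cover $Y_j\to Y_{j-1}$, or lies in $C$, in which case Lemma~\ref{lift2} applies, and since a closed lift in $Y_j$ of a curve in $C$ cannot be open, its alternative forces connected complement. The main obstacle is that a naive induction on $k$ fails, since a single cover can replace one curve by two disjoint lifts, so the number of curves to control can grow; the homological observation resolves this by ensuring already-processed curves stay under control in every further cover, leaving at each stage only the closed separating lifts of the next $\lambda_j$ to deal with, and to these Lemma~\ref{lift2} applies directly. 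Iterating for $j=1,\dots,k$ produces the required tower $Y_s=Y_k\to\cdots\to X$.
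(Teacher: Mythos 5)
Your approach is essentially the same as the paper's: process the curves one at a time, apply Lemma~\ref{lift2} (the version for pairwise disjoint separating curves) to the closed separating lifts of the next curve, and rely on the fact that once every closed lift of a curve is non-separating, this persists under further covers. The difference lies in how persistence is established. The paper has a separate lemma (the one immediately preceding the second Lemma~\ref{lift2}) proving, by an explicit path-lifting argument, that if $X\setminus\lambda$ is connected and $Y\to X$ is a $2$-$1$ cover then the complement of each lift of $\lambda$ in $Y$ is connected. You instead use the mod-$2$ homology characterization of separating simple closed curves: a closed path-lift pushes forward to $[\lambda]\neq 0$ in $H_1(\cdot;\mathbb Z/2)$, so is itself nonzero, hence non-separating. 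Both are correct; yours is cleaner and applies to arbitrary finite unbranched covers without any restriction to degree $2$. You are also more explicit than the paper's own terse induction in verifying that the already-processed curves remain under control after appending each new tower of covers --- the paper's proof of this lemma simply applies Lemma~\ref{lift2} to the lifts of $\lambda_{k+1}$ and stops, tacitly relying on the persistence lemma without citing it, whereas you spell out exactly which case of the dichotomy (already non-separating, versus handled by Lemma~\ref{lift2}) applies to each lift in the final cover.
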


\begin{proof}

We will prove this by induction on the number $k$, and so we start by proving it for $k=1$.
But this case is covered by Lemma 5.

Assume now that the lemma holds for $k$ curves, $k\geq 1$, and that we are given $k+1$ curves.
We start by choosing a sequence of liftings 
$$
Y_s\rightarrow Y_{s-1}\rightarrow\cdot\cdot\cdot\rightarrow Y_1\rightarrow X  
$$
such that the conclusion holds for $\lambda_1,...,\lambda_k$.   Consider the liftings of $\lambda_{k+1}$
to $Y_s$.   We ignore the liftings that are either open curves or has connected complements, and so we
are left with liftings $\lambda_{1}^{s},...,\lambda_{m}^s$.  These curves are pairwise disjoint.  
Hence Lemma \ref{lift2} applies and we are done.
\end{proof}

\begin{lemma}\label{open1}
Let $Y_s\rightarrow Y_{s-1}\rightarrow\cdot\cdot\cdot\rightarrow Y_1\rightarrow X$
be a sequence of 2 to 1 unbranched coverings of a compact Riemann surface $X$, and 
let $\lambda$ be a simple non-contractible curve in $X$ that does not separate $X$.
Then there exists a 2 to 1 covering $Y_{s+1}\rightarrow Y_{s}$ such that 
all liftings of $\lambda$ to $Y_{s+1}$ are open curves.  
\end{lemma}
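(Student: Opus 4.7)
The plan is to produce $Y_{s+1}\to Y_s$ abstractly by constructing a surjective homomorphism $\rho\colon\pi_1(Y_s)\to\Z/2\Z$ which is nonzero on every closed lift of $\lambda$ in $Y_s$; the connected unbranched $2$-to-$1$ cover corresponding to $\ker\rho$ will then be $Y_{s+1}$.

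First I would analyze the preimage $\pi^{-1}(\lambda)\subset Y_s$, where $\pi\colon Y_s\to X$ is the composition of the given coverings. It decomposes as a disjoint union of simple closed curves $C_1,\ldots,C_r$, each mapping to $\lambda$ with some degree $d_j$, and $\sum_j d_j=2^s$. A path lift of $\lambda$ closes up after one traversal precisely when it traces a component with $d_j=1$; call these degree-one components $\tilde\lambda_1,\ldots,\tilde\lambda_m$. Components with $d_j\ge 2$ yield only open lifts, and a direct degree count shows that their images in any further $2$-$1$ cover again have degree $\ge 2$ over $\lambda$. Thus it suffices to choose $\rho$ satisfying $\rho([\tilde\lambda_i])=1$ for every $i$: a path lift of $\lambda$ into $Y_{s+1}$ through a point of $\tilde\lambda_i$ is closed iff $\tilde\lambda_i$ itself lifts closed in $Y_{s+1}\to Y_s$, iff $\rho([\tilde\lambda_i])=0$.

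To produce such a $\rho$, I would use functoriality of the push-forward $\pi_*\colon H_1(Y_s;\Z/2\Z)\to H_1(X;\Z/2\Z)$. Since $\pi|_{\tilde\lambda_i}\colon\tilde\lambda_i\to\lambda$ is a homeomorphism, $\pi_*[\tilde\lambda_i]=[\lambda]$; and as $\lambda$ is non-separating in $X$, $[\lambda]\neq 0$ in $H_1(X;\Z/2\Z)$. Hence for every odd-sized subset $S\subset\{1,\ldots,m\}$,
$$
\pi_*\Bigl(\sum_{i\in S}[\tilde\lambda_i]\Bigr)=|S|\,[\lambda]=[\lambda]\neq 0,
$$
so no odd sum of the $[\tilde\lambda_i]$'s can vanish in $H_1(Y_s;\Z/2\Z)$. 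Standard linear algebra over $\Z/2\Z$ then shows that this ``no odd relation'' property is exactly what is needed for the rule $[\tilde\lambda_i]\mapsto 1$ to extend to a well-defined linear functional on the subspace $\mathrm{span}\{[\tilde\lambda_i]\}$, and thence, by choosing any complement, to a linear functional $\rho$ on all of $H_1(Y_s;\Z/2\Z)$.

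Taking $Y_{s+1}\to Y_s$ to be the connected unbranched $2$-to-$1$ cover corresponding to $\ker\rho$, the discussion in the second paragraph shows that every path lift of $\lambda$ from $X$ to $Y_{s+1}$ is open. The only nontrivial ingredient is the non-vanishing of odd sums $\sum_{i\in S}[\tilde\lambda_i]$, which is where the non-separating hypothesis on $\lambda$ is used, via the naturality of $\pi_*$.
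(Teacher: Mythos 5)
Your proof is correct, and it takes a genuinely different, more algebraic route than the paper. The paper's argument is constructive: it fixes a base point $y\in\lambda$ and a simple closed curve $\gamma$ crossing $\lambda$ transversally at exactly one point (which exists because $\lambda$ is non-separating), builds a disjoint multicurve $\Gamma$ in $Y_s$ out of closed lifts of powers of $\gamma$ covering every preimage of $y$, checks that $Y_s\setminus\Gamma$ is connected (each closed $\lambda$-lift provides a path from one side to the other of the unique curve of $\Gamma$ that it meets), and then forms $Y_{s+1}$ by cutting along $\Gamma$; a closed lift of $\lambda$ crosses its dual curve in $\Gamma$ once and so opens up in the double cover. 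You instead identify a $2$-to-$1$ unbranched cover with a surjection $\pi_1(Y_s)\to\Z/2\Z$, i.e.\ a nonzero functional on $H_1(Y_s;\Z/2\Z)$, observe that the degree-one components $\tilde\lambda_i$ of $\pi^{-1}(\lambda)$ must each be sent to $1$, and reduce existence of such a functional to the absence of odd relations $\sum_{i\in S}[\tilde\lambda_i]=0$, which you obtain by pushing forward along $\pi$ and using $[\lambda]\neq 0$ in $H_1(X;\Z/2\Z)$. Both proofs rest on the same underlying fact, namely that a non-separating simple closed curve is nonvanishing in mod-$2$ homology, stated homologically in your version and realized geometrically in the paper's as the existence of a transverse dual curve $\gamma$. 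Your approach is shorter and dispenses with the cut-and-paste bookkeeping; the paper's is more explicit about the geometry of the resulting surface $Y_{s+1}$, which is in keeping with the hands-on cutting construction used throughout that section.
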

\begin{proof}
Let $y\in X$ be a point on $\lambda$, so that we may parametrize $\lambda$
by a map $\lambda:[0,1]\rightarrow X$ such that $\lambda(0)=\lambda(1)=y$. \

Let $\gamma:[0,1]\rightarrow X$ be a simple closed curve, $\gamma(0)=\gamma(1)=y$,
such that $\gamma$ intersects $\lambda$ transversally and only at one point.  Such a 
curve exists by the assumption made on $\lambda$.
The complement
of $\gamma$ is then connected. \

Now consider liftings of (multiples of) $\gamma$ to $Y_{s}$.  
Fix a base point $y^l_{1}\in Y_{s}$ such that $\pi(y^l_{1})=y$, $\pi:=\pi_{1}\circ\cdot\cdot\cdot\circ\pi_{s-1}$.
There is a smallest integer $n_{1}$ such that the lifting of $n_{1}\gamma$ with base point
$y^l_{1}$ is a closed curve, but all liftings of $m\gamma$, $m<n_{1}$, with base point 
$y^l_{1}$ are open curves (or $n_{1}=1$).  Let $\gamma^l_{1}$ denote the lifting of $n_{1}\gamma$ with 
base point $y^l_{1}$.  \

The closed curve $\gamma^l_{1}$ passes through $n_{1}$ different pre-images of 
the point $y$.  If there are some pre-images left, choose a $y^l_{2}$ with 
$\pi(y^l_{2})=y$ such that $y^l_{2}$ does not lie in $\gamma^l_{1}$.
Repeating the argument above, one finds a simple closed curve $\gamma_{2}^l$
which is the lifting of $n_{2}\gamma$ with base point $y^l_{2}$.   Then 
$\gamma^l_{2}$ does not intersect $\gamma^l_{1}$.  \

Repeating this finitely many times we end up with a set of pairwise disjoint closed curves
$\{\gamma_{j}^l\}_{j=1}^k$, where $\gamma^l_{j}$ is a lifting of $n_{j}\gamma$, 
such that any point in $\pi^{-1}(y)$ lies in a curve $\gamma^l_{j}$.  \

Now let $y^l_{1},...,y^l_{m}$ be all the pre-images of $y$ such that 
the lifts $\lambda_{j}^l$ of $\lambda$ determined by the base point $y^l_{j}$
is a closed curve.  We have to choose the cover $Y_{s+1}\rightarrow Y_{s}$ such 
that all liftings of these curves are open. \

For each $y^{l}_{j}$ choose the closed curve $\gamma_{j}^l$ that contains this point. 
We let $\Gamma$ denote the collection of all these closed curves. Then for each 
curve $\lambda^l_{j}$ there is precisely one curve in $\Gamma$ that intersects it, and the intersection 
is transversal and at exactly one point.  For each curve in $\Gamma$ there is at least
on curve $\lambda^l_{j}$ that intersects it.  This means that $Y_{s}\setminus\Gamma$
is connected, since each side of each curve in $\Gamma$ is in the same connected component
of $Y_{s}\setminus\Gamma$ - some $\lambda^l_{j}$ is a path from one side of the curve to the other.  \

So let $Y_{s+1}$ be the unbranched 2 to 1 cover with respect to $\Gamma$ and we are done.        
\end{proof}

\begin{lemma}\label{open2}
Let $X$ be a compact Riemann surface and let $\{\lambda_j\}_{j=1}^k$ denote a 
finite family of simple closed curves which are not contractible. 
We can then find a sequence 
$$
Y_s\rightarrow Y_{s-1}\rightarrow\cdot\cdot\cdot\rightarrow Y_1\rightarrow X
$$
of 2-1 unbranched coverings such 
that if $\gamma_j^s$ is a lifting of $\gamma_j$ to $Y_s$, then $\gamma_j^s$ is an open curve.  
\end{lemma}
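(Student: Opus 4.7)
The plan is a two-stage reduction using Lemmas \ref{lift3} and \ref{open1}. First I would apply Lemma \ref{lift3} to the family $\{\lambda_j\}_{j=1}^k$ to produce a tower $Y_s \to Y_{s-1} \to \cdots \to Y_1 \to X$ of 2-1 unbranched coverings such that every lift of every $\lambda_j$ in $Y_s$ is either an open arc or a simple closed non-contractible curve with connected complement in $Y_s$. Denote the surviving closed lifts by $\mu_1, \ldots, \mu_m$; these are non-separating in $Y_s$.

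Next, I would iteratively open these closed lifts using Lemma \ref{open1}. The construction in Lemma \ref{open1} goes through verbatim when one takes any compact Riemann surface as the base, so I may treat $Y_s$ as the new base surface. Applying Lemma \ref{open1} to $\mu_1$ inside $Y_s$ yields a further 2-1 cover $Y_{s+1} \to Y_s$ in which every lift of $\mu_1$ is an open arc. Two observations keep the procedure on track: an arc with distinct endpoints always lifts to an arc with distinct endpoints, so the previously-open lifts of the $\lambda_j$'s remain open in $Y_{s+1}$; and the preservation lemma stated between the two statements named Lemma \ref{lift2} shows that any still-closed lift of a non-separating $\mu_i$ in $Y_{s+1}$ is itself non-separating. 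Thus the invariant ``every closed lift of any $\lambda_j$ is a non-separating simple closed non-contractible curve'' is maintained at each stage, and one can feed the new family of closed lifts back into Lemma \ref{open1}.

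The principal obstacle is termination. A single application of Lemma \ref{open1} targeted at $\mu_1$ need not open any of the other $\mu_i$'s, and since each of them may have two lifts in the new cover, the total number of surviving closed lifts can increase. To argue that finitely many iterations do suffice, I would appeal to the residual 2-finiteness of $\pi_1(X)$ for $X$ a compact Riemann surface of genus $\geq 1$: since each $[\lambda_j]$ is a non-trivial element of $\pi_1(X)$, there exists a finite 2-group quotient $\pi_1(X) \twoheadrightarrow G$ in which none of the (finitely many) classes $[\lambda_j]$ maps to the identity. Any such quotient is realized by a tower of 2-1 unbranched covers, and in the corresponding top surface $Y_s$ no lift of any $\lambda_j$ based at any preimage of the base point can be closed. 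Reading the conclusion geometrically, closedness of a lift of $\lambda_j$ at a given point is a coset condition $[\lambda_j] \in g^{-1} \pi_{s\ast}\pi_1(Y_s) g$, and a sufficiently deep tower of 2-1 covers, constructed by repeated use of Lemma \ref{open1}, destroys all such conditions simultaneously. This residual-finiteness step is where the real work lies; the rest is bookkeeping.
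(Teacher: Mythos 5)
Your proposal diverges from the paper's argument in a way that creates a problem you then struggle to close, and the source of the difficulty is a misreading of what Lemma \ref{open1} gives you. That lemma is stated for a curve $\lambda$ lying in the \emph{original base} $X$ of the whole tower $Y_s\to\cdots\to X$: it produces \emph{one} further $2$-$1$ cover $Y_{s+1}\to Y_s$ in which \emph{every} lift of $\lambda$ (no matter how many closed lifts $\lambda$ has accumulated in $Y_s$) becomes an open arc. You instead restart the base at $Y_s$ and feed a single surviving closed lift $\mu_1\subset Y_s$ into Lemma \ref{open1}. That only targets the one curve $\mu_1$, leaves its siblings $\mu_2,\dots,\mu_m$ untouched, lets their numbers grow, and hence manufactures the termination problem you describe. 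The paper's proof of Lemma \ref{open2} is a short induction on $k$ that never encounters this issue: having opened $\lambda_1,\dots,\lambda_k$ by the inductive hypothesis (and noting, as you do correctly, that an open arc stays open under further covers), one applies Lemma \ref{open1} once, with $\lambda_{k+1}\subset X$ as the curve and the tower built so far, and all closed lifts of $\lambda_{k+1}$ are opened in a single additional level. That is the whole inductive step.

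Your fallback via residual $2$-finiteness of $\pi_1(X)$ is heavy machinery that the paper carefully avoids, and as written it also has a genuine gap. Residual $2$-finiteness gives the \emph{existence} of some normal subgroup of $2$-power index missing all the classes $[\lambda_j]$, from which one can extract a chief series and hence a tower of $2$-$1$ covers. But you then assert that ``a sufficiently deep tower\ldots constructed by repeated use of Lemma \ref{open1}, destroys all such conditions simultaneously,'' which conflates the abstractly furnished tower with the concretely iterated one; there is no argument that the covers produced by repeating Lemma \ref{open1} on the proliferating $\mu_i$'s dominate the required normal cover, so your iteration still has no termination bound. You are also silent about the intermediate layers of the tower extracted from the chief series being $2$-$1$ unbranched covers over their immediate predecessors in the sense the paper needs, although this part is at least routine. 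The cleaner route is simply to read Lemma \ref{open1} as applying to a curve in $X$ and perform the $k$-step induction; no appeal to residual finiteness is necessary.
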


\begin{proof}
By Lemma \ref{lift3} we can assume that all the $\lambda_j$ have connected complement.
Again we will prove it by induction on $k$.  If $k=1$ we choose a simple closed curve $\gamma$
that intersects $\lambda$ transversally at exactly one point.  Then the 2 to 1 unbranched covering of $X$
with respect to $\gamma$ will do.  Assume that the lemma holds for some $k\geq 1$
and that we are given $k+1$ curves.   Then there is a sequence
$$
Y_s\rightarrow Y_{s-1}\rightarrow\cdot\cdot\cdot\rightarrow Y_1\rightarrow X
$$
such that all liftings of the curves $\lambda_1,...,\lambda_k$ are non closed curves.  By Lemma 
\ref{open1} there exists a $Y_{s+1}$ such all liftings of $\lambda_{k+1}$ to $Y_{s+1}$
are open curves.   
\
\end{proof}

\begin{lemma}\label{mainlift}
Let $X_{1}$ be a compact Riemann surface of genus $g_{X_{1}}\geq 1$, 
let $y\in X_{1}$ be a point, and let 
$\Gamma=\{\gamma_{j}\}_{j=1}^{2g}$ be smooth loops $\gamma_{j}:[0,1]\rightarrow X$,
$\gamma_{j}(0)=\gamma_{j}(1)=y$, such that $\Gamma$ is a basis for the homotopy of $X$.  
There exists a tower $\{(X_{j},\pi_{j})\}_{j=1}^\infty$ of 2 to 1 unbranched 
coverings such that for any non-trivial finite composition 
$\gamma=\gamma_{j_{s}}\cdot\cdot\cdot\gamma_{j_{1}}$, there exists a
$k\in\mathbb N$ such that any lifting of $\gamma$ to $X_{k}$ is an open curve. 
\end{lemma}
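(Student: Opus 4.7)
The plan is to enumerate the countable family of non-trivial finite compositions $\gamma^{(1)},\gamma^{(2)},\ldots$ of the generators $\gamma_1,\ldots,\gamma_{2g}$ and to build the tower inductively so that at some finite stage $X_{k_n}$ every lift of each $\gamma^{(i)}$ with $i\le n$ is an open curve. Since the property of lifting to an open curve is preserved by further covers, the resulting tower satisfies the conclusion of the lemma.

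For the inductive step, assume we have built $X_{k_n}$ with the required property. Consider $\gamma^{(n+1)}$ and its finitely many lifts to $X_{k_n}$; let $\tilde\gamma_1,\ldots,\tilde\gamma_r$ be the closed ones. Each $\tilde\gamma_i$ is a non-trivial loop in $X_{k_n}$, since $\gamma^{(n+1)}$ is non-trivial in $\pi_1(X_1)$ and covering projections are $\pi_1$-injective. The task then reduces to the following auxiliary claim: \emph{for any compact Riemann surface $Y$ of genus $\ge 1$ and any non-trivial loop $\alpha$ in $Y$, there exists a finite tower of $2$-$1$ unbranched coverings over $Y$ in which every lift of $\alpha$ is an open curve.} Applying this claim to each $\tilde\gamma_i$ in turn and stacking the resulting towers (noting that open lifts remain open under further covers) produces $X_{k_{n+1}}$.

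To prove the auxiliary claim, I would distinguish two cases. If $[\alpha]\ne 0$ in $H_1(Y;\mathbb Z/2)$, Poincar\'e duality modulo $2$ furnishes a simple closed curve $\mu\subset Y$ with algebraic intersection $\alpha\cdot\mu\equiv 1\pmod 2$. The $2$-$1$ unbranched cover of $Y$ defined by $\mu$ via the cut-and-paste construction used in this section corresponds to the homomorphism $\phi:\pi_1(Y)\to\mathbb Z/2$ given by intersection number with $\mu$ modulo $2$; since $\phi(\alpha)=1$, every lift of $\alpha$ in this single $2$-$1$ cover is open (and Lemma \ref{open2} is available if we further want $\mu$ itself to be opened along the way). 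If $[\alpha]=0$ in $H_1(Y;\mathbb Z/2)$, pass to any $2$-$1$ cover, replace $\alpha$ by its closed lifts, and iterate. Termination of the iteration follows from the classical fact that surface groups of genus $\ge 1$ are residually $2$: every non-trivial element lies outside some finite-index normal subgroup with quotient a finite $2$-group, and such a subgroup is realized by a finite tower of $2$-$1$ unbranched coverings (every index-$2$ subgroup is automatically normal at each stage).

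The main obstacle is the null-homological case. The simple-curve machinery of Lemmas \ref{open1}--\ref{open2} together with Poincar\'e duality readily opens any homologically non-trivial lift in a single $2$-$1$ cover, but a null-homologous loop such as $\gamma_1^{\,2}$ has even intersection with every simple closed curve, so no single $2$-$1$ cover opens it. The remedy of climbing further up the tower until the lift acquires non-trivial $\bmod\,2$ homology is correct, but the fact that this procedure terminates for every fixed $\alpha$ is precisely residual $2$-ness of the surface group, the non-trivial input of the argument.
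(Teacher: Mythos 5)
Your approach is correct but takes a genuinely different route from the paper's. The paper inducts on \emph{word length}: once all compositions of length at most $s$ lift to open curves in $X_{k_s}$, any closed lift of a length-$(s+1)$ word is necessarily a \emph{simple} closed curve in $X_{k_s}$, so the simple-curve machinery of Lemmas~\ref{lift3} and \ref{open2} opens it directly. You instead run through an arbitrary enumeration of non-trivial words; the closed lifts are then not simple in general, so those lemmas do not apply as stated, and you replace this constructive core with the group-theoretic fact that surface groups of genus $\geq 1$ are residually (finite $2$-group). That fact is true (it follows, for instance, from Baumslag's theorem that surface groups are residually free together with Magnus's theorem that free groups are residually $2$), and it does yield your auxiliary claim: a finite $2$-group quotient in which $\alpha$ survives, together with a composition series of that quotient, pulls back to a tower of $2$-$1$ covers whose top stage is normal over $Y$, so \emph{every} lift of $\alpha$ opens there, not only the based one. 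What the paper's approach buys is self-containedness: Lemmas~\ref{lift1}--\ref{open2} constitute an explicit, purely topological construction, and inducting on word length is precisely what keeps each closed lift simple so that this cut-and-paste construction suffices, without importing a non-elementary theorem about residual properties of surface groups. One small correction to your write-up: in the null-homological case, ``pass to any $2$-$1$ cover'' is too loose --- a badly chosen cover can leave every closed lift null-homologous mod $2$ indefinitely --- and should be replaced by descending along the composition series that residual $2$-ness provides; you rightly flag that fact as the genuine input guaranteeing termination.
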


\begin{proof}
We will prove this by induction on the length of the compositions.  First we apply 
Lemma \ref{open2} to get a sequence
$$
X_{k_{1}}\rightarrow\cdot\cdot\cdot\rightarrow X_{1}
$$
such that any lift of any of the basis elements $\gamma_{j}$ is an open curve. \

Now assume that we have a sequence 
$$
X_{k_{s}}\rightarrow\cdot\cdot\cdot\rightarrow X_{k_{1}}\rightarrow\cdot\cdot\cdot\rightarrow X_{1}
$$
such that any lifting of a curve $\gamma=\gamma_{j_{1}}\cdot\cdot\cdot\gamma_{j_{l}}$
with $l\leq s$ to $X_{k_{s}}$ is an open curve.  Then any lift of 
a curve $\gamma_{0}=\gamma_{j_{1}}\cdot\cdot\cdot\gamma_{j_{s+1}}$
is either an open curve or a simple smooth closed curve (since the composition of
the $s$ first components give an open curve).  Let $\lambda_{1},...,\lambda_{m}$
denote the finite number of such closed lifts to $X_{k_{s}}$.  By Lemma \ref{open2}
there exists a sequence 
$$
X_{k_{s+1}}\rightarrow\cdot\cdot\cdot\rightarrow X_{k_{s}}
$$
such that all liftings of these curves to $X_{s+1}$ are open curves. 
\end{proof}

\medskip

\begin{remark}
The construction is flexible enough so that $X_\infty'$ can be made pluripolar or can be made of tranverse dimension zero.
\end{remark}

It is not hard to modify the above construction so that the topologies of the leaves
vary. 

\begin{prop}
There exists a projective limit $X_\infty$ such that all leaves 
except one are simply connected.  
\end{prop}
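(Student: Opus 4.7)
The plan is to adapt the inductive tower construction of the previous proposition so that one distinguished closed lift is preserved throughout, yielding exactly one leaf that fails to be simply connected.

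Fix a non-contractible simple closed loop $\alpha$ in $X_1$ based at $y$; note that $X_1$ must have genus $g_{X_1}\geq 2$, since the earlier proposition on tori shows that in genus one the leaves of any projective limit are uniformly of the same type. I would build the tower $\{X_n,f_n\}$ together with a compatible sequence of base points $\{y^{(n)}\}$, $y^{(1)}=y$ and $f_n(y^{(n+1)})=y^{(n)}$, such that $\alpha$ lifts to a closed loop $\alpha^{(n)}$ at $y^{(n)}$ in every $X_n$. The leaf of $X_\infty$ through $\{y^{(n)}\}$ will then contain a closed loop (the limit of $\alpha^{(n)}$), hence it is not simply connected.

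I would run the induction of Lemma \ref{mainlift} with the following modification. At each stage where Lemma \ref{open1} is invoked to open closed lifts of some loop $\lambda$ in $X_n$ via a 2-to-1 cover defined by a family $\Gamma$ of simple closed curves, I exclude $\alpha^{(n)}$ from the list of closed lifts being opened; the family $\Gamma$ is then chosen so that $\alpha^{(n)}$ has even mod-2 intersection with $\Gamma$ (for example by perturbing each curve in $\Gamma$ slightly off $\alpha^{(n)}$), while each closed lift to be opened still crosses $\Gamma$ in a single transverse point. Then $\alpha^{(n)}$ splits into a pair of disjoint closed lifts in $X_{n+1}$, and I designate one of them as $\alpha^{(n+1)}$ with base point $y^{(n+1)}$ above $y^{(n)}$. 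The feasibility of this bi-criteria relies on the higher-genus assumption and on the fact that, although each step of the tower is a 2-to-1 (hence normal) cover, the compositions $X_n\to X_1$ need not be normal, so closed lifts of $\alpha$ at different fiber points of $X_n\to X_1$ can represent distinct homology classes in $X_n$ and thus be distinguished by the mod-2 intersection pairing with $\Gamma$.

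For any leaf of $X_\infty$ through a compatible sequence $\{z^{(n)}\}\neq\{y^{(n)}\}$, the argument of the previous proposition applies essentially verbatim: any non-trivial loop is homotopic via $\pi_1$ to a finite composition of basis loops in $X_1$, and the modified induction ensures that all closed lifts of such compositions at non-distinguished base-point sequences eventually become open; hence every leaf other than the one through $\{y^{(n)}\}$ is simply connected. The main obstacle is realizing the mod-2 intersection bi-criteria for $\Gamma$ at each inductive step while simultaneously carrying out the usual bookkeeping of opening all other closed lifts in the diagonal enumeration of Lemma \ref{mainlift}; this requires verifying that closed lifts of $\alpha$ at non-distinguished fiber points can be kept homologically distinguishable from $\alpha^{(n)}$ in $X_n$ throughout the construction, which is the technical heart of the proof and where the genus $\geq 2$ hypothesis together with the emerging non-normality of the composition $X_n\to X_1$ plays an essential role.
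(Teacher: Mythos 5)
There is a genuine gap in your argument, and it sits precisely where you locate ``the technical heart of the proof.'' The difficulty is that whether $\alpha^{(n)}$ lifts to a closed or an open curve under the $2$-to-$1$ cover determined by $\Gamma$ is governed by the mod-$2$ intersection number $[\alpha^{(n)}]\cdot[\Gamma]$ in $H_1(X_n;\mathbb F_2)$, and this is a topological invariant of the homology classes, not of their representatives. ``Perturbing each curve in $\Gamma$ slightly off $\alpha^{(n)}$'' cannot change it: if the mod-$2$ intersection is odd you cannot isotope $\Gamma$ to be disjoint from $\alpha^{(n)}$, and if it is even there is nothing to fix. So the dual requirement --- that $[\Gamma]\cdot[\alpha^{(n)}]\equiv 0$ while $[\Gamma]\cdot[\lambda_j^l]\equiv 1$ for every closed lift $\lambda_j^l$ of the curve $\lambda$ currently being opened --- is a system of linear constraints over $\mathbb F_2$ that can be genuinely inconsistent: if $[\alpha^{(n)}]$ equals $\sum_j c_j[\lambda_j^l]$ with $\sum_j c_j$ odd, no choice of $\Gamma$ works. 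Projecting such a relation down to $X_1$ gives $[\alpha]=[\lambda]$ in $H_1(X_1;\mathbb F_2)$, which certainly occurs for many $\lambda$ in the enumeration of $\pi_1(X_1)$ run in Lemma \ref{mainlift} (for instance $\lambda=\alpha$ itself, or $\alpha$ times any commutator). Your appeal to non-normality of the composed covers as a vague reason for the constraints being separable is not an argument; non-normality merely fails to force the closed lifts to be deck-conjugate, it does not guarantee their homology classes, together with that of $\alpha^{(n)}$, can always be separated by a single cohomology class realized by a $\Gamma$ that must in addition consist of disjoint simple closed curves with connected complement. Nothing you have written addresses this, and without it the induction cannot continue.

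The paper sidesteps all of this by abandoning homological control in favor of a geometric confinement argument: it fixes a separating domain $\Omega_1\subset X_1$ (a once-punctured torus), carries along a distinguished isomorphic lift $\Omega_j\subset X_j$ at each level, and, before invoking Lemma \ref{open2}, caps $\Omega_j$ by a disk so that only curves \emph{nontrivial away from $\Omega_j$} are ever opened. The surviving non-simply-connected leaf is the one traced through the $\Omega_j$'s, and the proof that all other leaves are simply connected uses that $d_j(x_j,y_j)\to\infty$, so any persistent closed lift must eventually avoid $\Omega_j$, hence be free-homotopic to $\partial\Omega_j$ after capping; a distance computation then yields a contradiction. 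You may want to look at whether your homological bookkeeping can be repaired along these lines, but as written the key step does not go through.
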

The idea is to repeat the above construction, but to make sure that there exists 
one sequence $\{y_{j}\}$ of preimages of a point $y_{1}\in Y$ and a closed curve 
$\gamma_{1}$ based at $y_{1}$ that never lifts to an open curve.  We make sure that we open 
all curves that will not represent curves in the leaf containing the point $\{y_{j}\}$.
\begin{proof}
Let $X_{1}$ be a compact Riemann surface of genus greater than or equal to 2 and 
let $\Omega_{1}\subset X_{1}$ be a smoothly bounded domain which is homeomorphic
to a torus with a disk removed.  The boundary $\partial\Omega_{1}$ separates $X_{1}$.   
Fix a point $x_{1}\in X_{1}\setminus\overline{\Omega_{1}}$ as a base point 
for the fundamental group of $X_{1}$, and fix a set of generators $\Gamma=\{\gamma_{1},...,\gamma_{2g_{X_{1}}}\}$
of $\pi_{1}(X_{1},x_{1})$. 

We will construct a sequence of unbranched coverings 
by induction, so assume that we have constructed a sequence 
$$
X_{k}\longrightarrow\cdot\cdot\cdot\longrightarrow X_{1}
$$ 
for $k\geq 1$, and that we in each $X_{j}$ have fixed a pre-image $\Omega_{j}$
of $\Omega_{1}$ which is mapped homeomorphically onto $\Omega_{1}$ by the projection 
onto $X_{1}$.  The boundaries $\partial\Omega_{j}$ separate the $X_{j}$'s.

The cover $X_{k+1}\rightarrow X_{k}$ is constructed as follows.  First let 
$\Gamma_{k}$ denote the set of lifts to $X_{k}$ of curves in $\Gamma$ to 
simple closed curves.  There are only a finite number of such curves.  Let $\tilde\Gamma_{k}$
be the set of curves in  $\Gamma_{k}$ that are homotopic to curves in $X_{k}\setminus\Omega_{k}$.
Replace the representatives in $\tilde\Gamma_{k}$ by representative that are contained in 
$X_{k}\setminus\Omega_{k}$.  
Next replace 
$\Omega_{k}$ by a disk $D$ to obtain a compact Riemann surface $\tilde X_{k}$.  
Now let $\tilde\Gamma_{k}'$ be the set of all curves in $\tilde\Gamma_{k}$
that are still non-trivial in $\tilde X_{k}$.
By Lemma 
\ref{open2} there is a cover $\tilde X_{k+1}\rightarrow\tilde X_{k}$ 
such that all curves in $\tilde\Gamma_{k}'$ 
lift to open curves.  Finally replace all lifts of $D$ in $\tilde X_{k+1}$
by a copy of $\Omega_{k}$ to obtain the compact surface $X_{k+1}$ and choose
one of the copies of $\Omega_{k}$ and label it $\Omega_{k+1}$.

Now let $X_{j+1}\rightarrow X_{j}$ be the sequence of covers constructed 
according to the above procedure and let $X_{\infty}$ be the projective limit.
Let $y_{1}\in\Omega_{1}$ be a point and let $\{y_{j}\}$ be the point 
where $y_{j}$ is the pre-image of $y_{1}$ contained in $\Omega_{j}$.  The leaf $L$
containing $\{y_{j}\}$ is clearly not simply connected.  We claim that all other leaves 
are simply connected.  To see this let $\{x_{j}\}$ be a point in some leaf $L'$.   If this leaf 
is not simply connected there is a an element $\gamma\in\pi_{1}(X_{1},x_{1})$
that lifts to a simple closed curve $\tilde\gamma_{j}$ 
based at $x_{j}$ for all $j$ big enough.  Assume to get a contradiction that $L'\neq L$.
Then the distance $d_{j}(x_{j},y_{j})$ tends to infinity and so the distance between 
$x_{j}$ and $\Omega_{j}$ tends to infinity.  Since each lift $\tilde\gamma_{j}$
has the same length we have that $\tilde\gamma_{j}\cap\Omega_{j}=\emptyset$
for all $j$ big enough.  Consider $X_{j}$ for a big enough $j$.  If $\tilde\gamma_{j}$
was non-trivial in $\tilde X_{j}$ (obtained by replacing $\Omega_{j}$ by a disk D)
then the lift to $X_{j+1}$ would be an open curve.  Hence $\tilde\gamma_{j}$
is contractible in $\tilde X_{j}$ and $\tilde\gamma_{j}$ is free homotopic to 
$\partial\Omega_{j}$.  So $\tilde\gamma_{j+1}$ is free homotopic to 
one of the pre-images of $\partial\Omega_{j}$.  It has to be free homotopic to 
$\partial\Omega_{j+1}$, otherwise $\tilde\gamma_{j+2}$ would be an open curve. 
But then the distance between $x_{j+1}$ and $y_{j+1}$ is the same as the distance between 
$x_{j}$ and $y_{j}$.  Since the argument can be repeated for all $j$ big enough this 
is a contradiction.          
\end{proof}

\begin{remark}
The method used above for controlling topology of leaves is quite flexible, and 
it is possible to construct many more examples.  We will indicate a small 
modification of the above result
and then leave it to the interested reader to construct more examples.    

We will pay more attention to the leaf which is not simply connected 
to control its topology.  In the above construction choose the base point $x_{1}$
on $\partial\Omega_{1}$ instead of in $X_{1}\setminus\overline\Omega_{1}$.
At the inductive step above consider also all lifts of curves $\gamma$ 
to open curves $\tilde\gamma$ that are not homotopic to curves in $\Omega_{j}$.
Such a curve has to wind around a handle which is not in $\Omega_{j}$.  So when 
we replace $\Omega_{j}$ with a disk we may replace $\tilde\gamma$ with 
a non-trivial curve in $\tilde X$ which is homotopic to $\tilde\gamma$ mod $\Omega_{j}$.
Make sure that each such curve lifts to an open curve.  This way any curve $\gamma\in\pi_{1}(X_{1},x_{1})$
that survives in the limit will be homotopic to a curve in $\Omega_{1}$ and so 
the leaf has the same fundamental group as $\Omega_{1}$.  The result is a projective 
limit with \emph{all leaves except one simply connected and the remaining one homeomorphic
to a once punctured torus.}
\end{remark}

We now consider the higher dimensional case. 

\begin{prop}
Let $X$ be a compact complex manifold of dimension $\ell.$ 
Assume $S \subset X$ is a real orientable hypersurface such that 
$X \setminus S$ is connected. There is a double cover $Y$ of $X$, $f:Y \rightarrow X$ and an orientable hypersurface $\Sigma\subset Y$ with connected complement.
\end{prop}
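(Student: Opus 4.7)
The plan is to mimic, one dimension higher, the $2$-to-$1$ unbranched covering construction that appears in Lemma \ref{lift1} for curves on Riemann surfaces: I build $Y$ by cutting $X$ open along $S$ and cross-gluing two copies, and then take $\Sigma$ to be one of the two lifts of $S$ in $Y$.

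Since $X$ is a complex manifold it is canonically oriented, and $S$ is orientable by hypothesis, so the normal bundle of $S$ in $X$ is an orientable real line bundle, hence trivial. This supplies a tubular neighborhood $N\cong S\times(-\epsilon,\epsilon)$ with $S$ corresponding to $S\times\{0\}$ and a well-defined positive and negative side. Let $X'$ denote the topological manifold obtained by cutting $X$ open along $S$; it has two boundary components $S^+$ and $S^-$, each canonically homeomorphic to $S$, and its interior is $X\setminus S$. Take two copies $X'_1,X'_2$ of $X'$ and form $Y:=(X'_1\sqcup X'_2)/{\sim}$ by identifying $S^+_1$ with $S^-_2$ and $S^-_1$ with $S^+_2$ via the identity on $S$. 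The natural projection $f:Y\to X$ sending $X'_i$ to $X$ is a $2$-to-$1$ local homeomorphism everywhere, including along the gluing locus, so $Y$ inherits a unique complex structure by pullback that makes $f$ an unramified holomorphic double cover, and $Y$ is compact because $X$ is.

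Next I would verify the three required properties. \emph{$Y$ is connected:} since $X\setminus S$ is connected, each $X'_i$ is connected (being $X\setminus S$ together with two boundary copies of $S$), and the two copies are glued along a nonempty subset of $Y$, so $Y$ is connected. \emph{The preimage $f^{-1}(S)$ has two components:} by construction it is the disjoint union of the image $\Sigma_A$ in $Y$ of the identified pair $S^+_1,S^-_2$ and the image $\Sigma_B$ of the identified pair $S^-_1,S^+_2$; each is homeomorphic to $S$ and is therefore an orientable real hypersurface. Set $\Sigma:=\Sigma_A$. \emph{$Y\setminus\Sigma$ is connected:} removing $\Sigma$ destroys only the first of the two identifications, so as a set $Y\setminus\Sigma=(X'_1\setminus S^+_1)\sqcup(X'_2\setminus S^-_2)$ glued only along $\Sigma_B$; each factor deformation retracts onto $X\setminus S$ and is therefore connected, and the two factors are glued along the nonempty set $\Sigma_B$, so $Y\setminus\Sigma$ is connected.

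The only point that might look like an obstacle is that $Y$ was produced by a topological cut-and-paste along a real hypersurface, so one has to check that it actually carries a complex structure compatible with $X$; this is automatic because $f:Y\to X$ is a local homeomorphism, and one simply pulls back the holomorphic atlas of $X$ to $Y$. Everything else is a direct transcription of the Riemann surface proof of Lemma \ref{lift1} one dimension higher, with the orientable hypersurface $S$ playing the role of the curve being cut along.
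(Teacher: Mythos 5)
Your proposal is correct and follows exactly the approach the paper takes (which the paper states only tersely as ``the same construction as for Riemann surfaces''): cut $X$ open along $S$, cross-glue two copies to form the double cover $Y$, and take $\Sigma$ to be one of the two lifts of $S$. You supply the details---triviality of the normal bundle of $S$, pullback of the complex structure along the local homeomorphism $f$, and connectedness of $Y\setminus\Sigma$ via the overlap in the other lift $\Sigma_B$---that the paper leaves implicit by reference to the one-dimensional case.
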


\begin{proof}
To construct $Y$ we use the same construction as for Riemann surfaces cutting along $S.$ Let $\Sigma$ be one of the components of $f^{-1}(S)$. The proof that $Y \setminus \Sigma$ is connected is the same as for Riemann surfaces.
\end{proof}

\begin{thm}
There are $(X_\infty,f_n)$ projective limits of compact complex manifolds of dimension $\ell\geq 1$ such that $f_n:X_{n+1} \rightarrow X_n$ is a double cover and $X_\infty$ has the structure of a lamination of dimension $\ell.$ Every $\partial \overline{\partial}$-closed current on $X_\infty$ is closed. There is a unique directed positive closed current of mass $1$ on $X_\infty.$
If $X_1$ is projective then $X_\infty$ is embeddable in $\mathbb P^{2l+1}$.
\end{thm}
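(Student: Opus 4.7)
The plan is to iterate the preceding Proposition to build the tower, and then read off the remaining conclusions from Theorem \ref{current} and from the embedding theorem proved earlier in Section 2.

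First I would specify a starting manifold. Let $X_{1}$ be a compact complex manifold of dimension $\ell$ equipped with an orientable real hypersurface $S_{1}\subset X_{1}$ whose complement is connected. To see such pairs exist, take $X_{1}=Y\times M$ where $Y$ is a compact Riemann surface of genus $\geq 1$ and $M$ is any compact complex manifold of dimension $\ell-1$, and set $S_{1}=\gamma\times M$ for a non-separating simple closed curve $\gamma\subset Y$. For the projective case one can choose $Y$ to be an elliptic curve and $M=\mathbb{P}^{\ell-1}$, so $X_{1}$ is projective; the case $\ell=1$ is covered by $X_{1}=Y$ alone.

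Second, I would iterate the preceding Proposition. Given $X_{n}$ with an orientable hypersurface $S_{n}\subset X_{n}$ having connected complement, that Proposition supplies an unbranched double cover $f_{n}:X_{n+1}\to X_{n}$ together with an orientable hypersurface $S_{n+1}\subset X_{n+1}$ whose complement is again connected. This feeds the induction, producing a tower $\cdots\to X_{3}\to X_{2}\to X_{1}$ of degree-$2$ holomorphic unbranched coverings. Its projective limit $X_{\infty}$ then carries the structure of an $\ell$-dimensional holomorphic lamination by the general construction recalled in Section 2.2.

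Third, I would invoke the uniqueness statements already proved. Since every $f_{n}$ has degree $d_{n}=2\geq 2$, Theorem \ref{current} applies verbatim: $X_{\infty}$ supports a directed positive closed $(\ell,\ell)$-current, any $\partial\overline\partial$-closed $(\ell,\ell)$-current of order zero is closed, and such a current is uniquely determined (in particular, normalized to mass one by a chosen volume form on $X_{1}$). When $X_{1}$ is projective of dimension $\ell$, the embedding theorem of Section 2.2 furnishes the desired embedding $\Phi:X_{\infty}\hookrightarrow\mathbb{P}^{2\ell+1}$.

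I do not expect any genuine obstacle. The inductive step is entirely handled by the preceding Proposition, so the only real content is supplying the initial pair $(X_{1},S_{1})$, and the product construction above makes that immediate, both in the general and in the projective case.
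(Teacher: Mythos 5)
Your proposal is correct and takes essentially the same route as the paper: start from a compact complex manifold $X_1$ with an orientable real hypersurface of connected complement, iterate the preceding Proposition to build the tower of double covers, and then invoke Theorem \ref{current} for existence/uniqueness/closedness and Theorem 2 for the projective embedding. The only addition you make is the explicit product construction $X_1 = Y \times M$ supplying an initial pair $(X_1, S_1)$ (and its projective variant $Y$ elliptic, $M = \mathbb{P}^{\ell-1}$), which the paper leaves implicit.
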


\begin{proof}
We start with $X_1$, a compact complex manifold containing an orientable hypersurface with connected complement. This permits to construct the abstract lamination.  To prove
embeddability we apply Theorem 2.
\end{proof}

\section{Furstenberg foliations}

In the previous sections we have constructed laminations that have few closed currents; in this section we will construct a minimal lamination with uncountably 
many closed laminated extremal currents.  The construction uses an example of 
Furstenberg (see \cite{Fu1961} page 585) originally constructed as a counterexample 
to the existence of ergodic averages for smooth selfmaps of certain product spaces.  

\

We start by recalling how to construct a lamination by suspension. 
Let $S$ be a compact complex manifold of dimension $k$ and 
let $\tilde S\overset{\pi}{\rightarrow} S$ be its universal cover.  Then $\ S=\tilde S/\Gamma$
where $\Gamma$ is the group of deck transformations.  Let $M$ be a smooth 
manifold and let $F:\Gamma\rightarrow\mathrm{Diff}(M)$ be a representation.  
We get a group $\tilde\Gamma$ acting freely and properly discontinuously on 
the product $\tilde S\times M$ by defining 
$$
\tilde F(\tilde s,x):=(\gamma(\tilde s),F(\gamma)(x)).
$$  
Hence we may consider the (smooth) manifold $Y:=(\tilde S\times M)/\tilde\Gamma$.
If $U\subset\tilde S$ is an open set such that $\pi:U\rightarrow S\rightarrow Y$ is injective 
then $\tilde\pi:U\times M$ is injective.  Hence $Y$ can be given the structure 
of a k-dimensional lamination with global transversal $M$ and $Y$ is naturally 
a CR-fibration over $S$ with fibers $M$.  If $M$ is a complex manifold and the 
image of $\Gamma$ is holomorphic then $Y$ is naturally a complex manifold. 
When the group $F(\Gamma)$ is amenable, there is a measure on $M$ invariant by $F(\Gamma)$
and hence there is a closed current directed by the lamination.

\begin{example}
Let $\Gamma=\{m\gamma_{1}+n\gamma_{2}\}_{m,n\in\mathbb Z}\subset \tilde S:=\mathbb C$
be a lattice, \emph{i.e.}, the quotient $S$ is a torus.  Let $M=S^1$ and let $\varphi:M\rightarrow M$
be the map $z\mapsto az$ with $|a|=1$ not a root of unity.  Let $F(\gamma_{1}):=F(\gamma_{2}):=\varphi$.  We get an $S^1$-fibration over the torus all of whose leaves are 
isomorphic to the punctured complex 
plane and they are all dense.  Since Lebesgue measure $\mu$
on $S^1$ is invariant under $\varphi$ we get a closed laminated current $T$ on 
$Y$ by letting $\mu$ be the transversal measure.  On the other hand any 
closed current on $Y$ gives rise to an invariant measure on $S^1$, and since 
$\mu$ is the unique measure on $S^1$ invariant under $\varphi$ the current $T$
is unique.    
\end{example}

The goal in what follows is to use $T^2:=\{|\zeta_{1}|=|\zeta_{2}|=1\}\subset\mathbb C^2$ 
as fibers instead of $S^1$.  
Furstenberg has constructed a smooth minimal automorphism $\varphi:T^2\rightarrow
T^2$ without
\emph{unique} invariant measures and so we will get several closed currents.  
We will make the construction explicit and show that $\varphi$ extends to a holomorphic map on $\mathbb C^* \times \mathbb P^1.$
For the benefit of the reader 
we will show below that all orbits of $\varphi$ are dense in the torus, \emph{i.e.}, 
it gives us a minimal lamination.  The density of the orbits is not explicitly written in 
Furstenberg's article but seems widely known. 
Our lamination will be a compact CR-manifold in a holomorphic 
$(\mathbb C^*\times\mathbb P^1)$-fibration over a compact Riemann surface S. 

\

Furstenberg constructed a family $\{T_\alpha\}$ of automorphisms on 
the real torus of the simple form 
$$
T_{\alpha}(\zeta_{1},\zeta_{2}):=(\mathrm{e}^{2\pi i\alpha}\zeta_{1},
g_{\alpha}(\zeta_{1})\zeta_{2})
$$  
where $\alpha$ is irrational and $g$ is a $\mathcal C^\infty$-smooth function 
on the unit circle.   
This map is area preserving so the Haar measure $\mu$ is invariant. 
Moreover the function $g_{\alpha}(\zeta_{1})$ is of the form
$$
g_\alpha(\zeta_{1})=\frac{R(e^{2\pi i \alpha}\zeta_{1})}{R(\zeta_{1})}
$$
where $R$ is a \emph{measurable} function of modulus $1.$
Let $f(\zeta_1, \zeta_2):= R(\zeta_1)/\zeta_2.$ Then $f$ is invariant:

\bea
(f\circ T_\alpha)(\zeta_1,\zeta_2) & = & f(e^{2\pi i \alpha}\zeta_1, g_\alpha(\zeta_1)\zeta_2)\\
& = & \frac{R(e^{2\pi i \alpha}\zeta_1)}{g_\alpha(\zeta_1)\zeta_2}\\
& = & \frac{g_\alpha(\zeta_1)R(\zeta_1)}{g_\alpha(\zeta_1)\zeta_2}\\
& = & \frac{R(\zeta_1)}{\zeta_2}\\
& = & f(\zeta_1,\zeta_2)\\
\eea

Any such $f$ has essential range 
$$
E_{f}:=\{s_{0}\in S^1; 
\mu(f^{-1}(\{|s-s_{0}|<\delta\}))>0 \ 
{\mbox{for all}} \ \delta>0\}
$$
equal to all of $S^1$.  Let $s_{0}=\mathrm{e}^{2\pi i\theta_{0}}$ and let 
$I(s_{0},\delta)=
\{\mathrm{e}^{2\pi i\theta};|\theta-\theta_{0}|<\frac{\delta}{2}\}$.
If $R(\zeta_1)=e^{2\pi i \theta_1}$ is defined at a given point $\zeta_{1}$ the set of $\zeta_{2}=e^{2\pi i \theta_2}$ for which 
$f(\zeta_{1},\zeta_{2})\in I(s_{0},\delta)$ is given by 
$\{\mathrm{e}^{-2\pi i\theta}R(\zeta_{1});|\theta_1-\theta_0-\theta|<\frac{\delta}{2}\}$.
Letting $\chi=\chi_{s_{0},\delta}$ be the characteristic function for $K=K(s_{0},\delta):=f^{-1}(I(s_{0},\delta))$ we get that 
$$
\int_{K}\chi\mathrm{d}\mu=\int_{S^1}
(\int_{S^1}\chi\mathrm{d}\mu_{2})\mathrm{d}\mu_{1}
=\int_{S^1}\delta\mathrm{d}\mu_{1}=\delta.
$$
Hence any pair $(s_{0},\delta)$ gives rise to an invariant probability measure 
$\mu_{s_{0},\delta}:=\frac{\chi_{s_{0},\delta}\cdot\mu}{\delta}$. 
Let $\mu_{s_{0}}$ be a weak limit of measures $\mu_{s_{0},\delta_{j}}$
as $\delta_{j}\rightarrow 0$ and we get an invariant probability measure 
supported on the the level set $\{f=s_{0}\}$.  
It follows that there are uncountably many probability measures invariant under $T_\alpha.$ 
Observe that on $f^{-1}(s_0), T^n_\alpha(\zeta_1,\zeta_2)=(e^{2\pi i n\alpha}\zeta_1,s_0 R(e^{2\pi i n \alpha}\zeta_1))$ so the map is essentially an irrational rotation. Actually we have that 

\begin{cor}
The extremal probability measures invariant under $T_\alpha$ are parametrized 
by $S^1$; they  
are the pull backs of the 
Lebesgue
measure on $\{|\zeta_1|=1\}$ by $\pi_{1}$ to the graphs given by the level sets of $f.$ 
\end{cor}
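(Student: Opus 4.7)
The plan is to prove two things: first, that each of the measures $\mu_{s_0}$ constructed above is extremal (ergodic); second, that every extremal $T_\alpha$-invariant probability measure arises this way.

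For the first point, I would observe that on the level set $L_{s_0}:=\{f=s_0\}$ the equation $R(\zeta_1)/\zeta_2=s_0$ determines $\zeta_2$ as a measurable function of $\zeta_1$, so $\pi_1:L_{s_0}\to S^1=\{|\zeta_1|=1\}$ is a measurable bijection (defined $\mu_1$-a.e.). Under this identification, the computation done in the excerpt shows that $T_\alpha$ restricted to $L_{s_0}$ conjugates to the irrational rotation $\zeta_1\mapsto e^{2\pi i\alpha}\zeta_1$ on $S^1$. Let $\nu_{s_0}$ denote the pullback of Lebesgue measure on $S^1$ to $L_{s_0}$ via $\pi_1$. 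Since the irrational rotation is uniquely ergodic, $\nu_{s_0}$ is the only $T_\alpha$-invariant Borel probability measure supported on $L_{s_0}$; in particular it is ergodic. Moreover the measures $\mu_{s_0,\delta}$ constructed in the text concentrate on $L_{s_0}$ as $\delta\to 0$, so any weak limit $\mu_{s_0}$ is $T_\alpha$-invariant and supported on $L_{s_0}$, and hence must equal $\nu_{s_0}$.

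For the second point, let $\rho$ be any ergodic $T_\alpha$-invariant probability measure on $T^2$. Since $f\circ T_\alpha=f$, the function $f$ is $T_\alpha$-invariant, and thus by ergodicity $f$ is $\rho$-a.e. equal to a constant $s_0\in S^1$. Therefore $\rho$ is supported on the level set $L_{s_0}$, and by the unique ergodicity argument of the previous paragraph $\rho=\nu_{s_0}$. Thus the family $\{\nu_{s_0}\}_{s_0\in S^1}$ exhausts the extremal invariant measures, and they are mutually singular since their supports are disjoint level sets of $f$.

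The main subtle point will be justifying that $L_{s_0}$ really carries a well-defined pullback probability measure $\nu_{s_0}$ despite $R$ being only measurable: the level set is not a topological graph, but it is a measurable graph, and $\pi_1$ restricted to it is a measure-theoretic isomorphism onto $(S^1,\mu_1)$. The second potential subtlety is showing that the measures $\mu_{s_0}$ obtained as weak limits of $\mu_{s_0,\delta_j}$ are actually supported on $L_{s_0}$; this follows because $f$ is measurable but not continuous, so one must argue via the definition of essential range and Chebyshev-type estimates that the mass of $\mu_{s_0,\delta}$ outside any open neighborhood of $\overline{L_{s_0}}$ already vanishes, or equivalently, that on a set of full $\mu_{s_0,\delta}$-measure $|f-s_0|<\delta/2$, so any weak limit of the pushforwards $f_*\mu_{s_0,\delta_j}$ is the Dirac mass at $s_0$. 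With that in hand, invariance of $\mu_{s_0}$ forces it to coincide with $\nu_{s_0}$ by unique ergodicity of the rotation.
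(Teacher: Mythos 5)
Your proof is correct and follows essentially the same approach as the paper: use the $T_\alpha$-invariance of $f$ together with ergodicity to force an extremal measure to concentrate on a single level set $L_{s_0}$, then appeal to unique ergodicity of the irrational rotation via the measurable conjugacy $\pi_1\colon L_{s_0}\to S^1$. You are also right to flag the subtlety that the weak limit $\mu_{s_0}=\lim_j \mu_{s_0,\delta_j}$ is not automatically supported on $L_{s_0}$ (since $f$ is merely measurable, the sets $K(s_0,\delta)$ are not closed and portmanteau does not apply directly); the paper glosses over this, but the issue is harmless because, as you note, one can define $\nu_{s_0}$ directly as the pullback of Lebesgue measure along the measurable graph and verify invariance without any limiting argument.
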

\begin{proof}
By the limit process above there exists (at least) one invariant measure 
supported on each level set. On the other hand 
if $\nu$ is such an extremal measure then $(\pi_1)_*\nu$ is the Lebesgue measure on 
$\{|\zeta_1|=1\}.$  
Extremality then implies that $\nu$ has Borel support on a level set of $f.$ It is easy to show that $\{f=s_0\}$ which is a graph, supports a unique invariant measure.

\end{proof}

Having shown that any such map $T_{\alpha}$ gives rise to many invariant 
measures, hence also many closed directed currents, we proceed to construct
maps with dense orbits.  Note that 
$$
T_{\alpha}^n(\zeta_{1},\zeta_{2})=(\mathrm{e}^{2\pi in\alpha}\zeta_{1},
\Pi_{j=0}^{n-1}g_{\alpha}(\mathrm{e}^{2\pi ij\alpha}\zeta_{1})\zeta_{2})
= (\mathrm{e}^{2\pi in\alpha}\zeta_{1},\frac{R(\mathrm{e}^{2\pi in\alpha}\zeta_{1})}{R(\zeta_{1})}\zeta_{2}).
$$
In particular $R$ cannot be a continuous function if we want the orbits to be dense. 

\

Following Furstenberg we will construct the function $g$ as follows:

\bea
h(\theta) & := & \sum_{k\neq 0}\frac{1}{|k|} (e^{2\pi i n_k \alpha}-1)e^{2\pi i n_k \theta}\;{\mbox{and}}\\
g(e^{2\pi i \theta}) & := & e^{2 \pi i h(\theta)} \,
\eea
where the integers $n_{k}$ increase rapidly enough for the function $g$ (and $h$)
to be holomorphic on $\mathbb C^*$. We inductively define integers $v_k$ 
by 

\bea
v_1 & := & 1\\
v_{k+1}  & := & k\cdot 2^{v_k}+v_k+1\\
\eea

Let 
\bea
\alpha & := & \sum_{k=1}^\infty \frac{1}{2^{v_k}} \; {\mbox{and}}\\
n_k & := & 2^{v_k}, k>0\\
n_{-k} & = & -n_k, k>0.\\
\eea

It is easy to show that $\alpha$ is irrational.

\begin{remark}
Furstenberg defines $v_{k+1}=2^{v_{k}}+v_{k}+1$.  With this definition 
the map $h$ is only holomorphic on the annulus $\{\frac{1}{2}<|\zeta|<2\}$; 
the $k$ is added to get convergence on all of $\mathbb C^*$.  This is not needed
for all orbits to be dense. 
\end{remark}

Now
\bea
n_k\alpha-[n_k \alpha] & = & 2^{v_k} \sum_{\ell=k+1}^\infty \frac{1}{2^{v_{\ell}}}\\
& < & 2\cdot 2^{v_k} \frac{1}{2^{v_{k+1}}}\\
& \leq & 2 \cdot \frac{2^{v_k}}{2^{k\cdot 2^{v_k}}\cdot 2^{v_k} \cdot 2^1}\\
& = & \frac{1}{2^{k\cdot 2^{v_k}}}\\
& = & \frac{1}{ 2^{k\cdot n_k}}.\\
\eea

\begin{lemma}
The map $h$ and $g$ are holomorphic on $\mathbb C^*$.
\end{lemma}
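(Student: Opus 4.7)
The plan is to write $h$ as a Laurent series in $\zeta=e^{2\pi i\theta}$ and use the Diophantine estimate on $n_k\alpha$ obtained just above the lemma to show absolute, locally uniform convergence on $\mathbb C^*$. Setting $\zeta=e^{2\pi i\theta}$ and using $n_{-k}=-n_k$, the definition becomes
$$
h(\zeta)=\sum_{k\geq 1}\frac{1}{k}(e^{2\pi i n_k\alpha}-1)\zeta^{n_k}+\sum_{k\geq 1}\frac{1}{k}(e^{-2\pi i n_k\alpha}-1)\zeta^{-n_k},
$$
so it suffices to show each Laurent term is dominated on every compact annulus $A_R:=\{1/R\leq |\zeta|\leq R\}$ by a summable sequence.

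Using $|e^{2\pi i n_k\alpha}-1|\leq 2\pi\,|n_k\alpha-[n_k\alpha]|$ together with the estimate
$$
n_k\alpha-[n_k\alpha]\leq \frac{1}{2^{k\cdot n_k}}
$$
established directly above the lemma, I bound the $k$-th coefficient by $2\pi/(k\cdot 2^{k n_k})$. Hence on $A_R$ I can estimate
$$
\left|\frac{1}{k}(e^{\pm 2\pi i n_k\alpha}-1)\zeta^{\pm n_k}\right|
\leq \frac{2\pi}{k}\left(\frac{R}{2^k}\right)^{n_k}.
$$
For $k>\log_2 R$ the base $R/2^k$ is less than $1$, and since $n_k\to\infty$ the right hand side decays super-exponentially in $k$; in particular the series is absolutely convergent, uniformly on $A_R$. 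Since $R>1$ is arbitrary and the partial sums are holomorphic on $\mathbb C^*$, Weierstrass' theorem yields that $h\in\mathcal O(\mathbb C^*)$.

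For $g$, we simply observe that $g(\zeta)=e^{2\pi i h(\zeta)}$ is the composition of the entire function $w\mapsto e^{2\pi i w}$ with the holomorphic function $h$ on $\mathbb C^*$, hence $g\in\mathcal O(\mathbb C^*)$. The only place where care is needed is the Diophantine estimate, but that is precisely the content of the inductive choice of $v_k$ with the extra factor $k$ (without it one only gets convergence on a fixed annulus, as remarked just before the lemma); this is the step where the specific construction of $\alpha$ pays off.
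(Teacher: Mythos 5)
Your proof is correct and follows essentially the same route as the paper: split the Laurent series into positive and negative frequency parts, use $|e^{2\pi i n_k\alpha}-1|\leq 2\pi(n_k\alpha-[n_k\alpha])\leq 2\pi\cdot 2^{-kn_k}$ to dominate the coefficients, conclude locally uniform absolute convergence on $\mathbb C^*$, and obtain $g$ by composing $h$ with the entire exponential. The paper invokes Abel's lemma where you invoke uniform convergence on compact annuli plus Weierstrass, but that is just phrasing, not a substantive difference.
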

\begin{proof}
Define the function 
$h_{+}(\zeta)
:=\underset{k>0}{\sum}\frac{1}{k}(\mathrm{e}^{2\pi in_{k}\alpha}-1)
\zeta^{n_{k}}$.
Then $|\mathrm{e}^{2\pi in_{k}\alpha}-1|\cdot|\zeta|^{n_{k}}
=|\mathrm{e}^{2\pi i (n_{k}\alpha - [n_{k}\alpha])}-1|\cdot |\zeta|^{n_{k}}
\leq 2\pi(\frac{|\zeta|}{2^k})^{n_{k}}$ and so by Abel's lemma $h_{+}$
is holomorphic on $\mathbb C$.  
Similarly the function $h_{-}(\zeta)
:=\underset{k<0}{\sum}\frac{1}{k}(\mathrm{e}^{2\pi in_{k}\alpha}-1)
\zeta^{n_{k}}$ is holomorphic on $\mathbb C^*$
We have that $g(\zeta) = \mathrm{e}^{2\pi i (h_{+}(\zeta)+h_{-}(\zeta))}$ 
for all $|\zeta|=1$.  
\end{proof}
\

To see that $g$ is on the desired form define $H\in L^2([0,1])$ by 

\bea
H(\theta) & = & \sum_{|k| \neq 0} \frac{1}{|k|} e^{2\pi i n_k \theta}. 
\eea
Then $h(\theta)= H(\theta+\alpha)-H(\theta)$, and so defining $R$ by 
\bea
R(e^{2\pi i \theta}) & = & e^{2\pi i  H(\theta)}
\eea
gives us $\frac{R(e^{2\pi i \alpha} \zeta)}{R(\zeta)}= g(\zeta)$

\begin{prop}\label{dense}
All orbits of the map $T_{\alpha}$ restricted to the torus are dense.  
\end{prop}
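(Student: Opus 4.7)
The plan is to reduce density of every orbit to density of the support of each extremal invariant measure (which the preceding corollary identifies with the closure of the measurable graph of $R/s_{0}$), and then to establish that density by exploiting the lacunary structure of the Fourier series defining $R$.

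Fix $x_{0} = (\zeta_{1}^{0},\zeta_{2}^{0}) \in T^{2}$ and let $X := \overline{\{T_{\alpha}^{n}(x_{0}) : n \in \mathbb{Z}\}}$ be its orbit closure. By Krylov--Bogolyubov, $X$ supports at least one $T_{\alpha}$-invariant probability measure $\nu$. Choquet's theorem, applied to the compact convex set of invariant probabilities on $T^{2}$ together with the classification of extremal measures in the preceding corollary, yields $\nu = \int_{S^{1}} \mu_{s}\, d\lambda(s)$ for some probability $\lambda$ on $S^{1}$. Picking any $s_{0} \in \mathrm{supp}(\lambda)$ gives $\mathrm{supp}(\mu_{s_{0}}) \subseteq X$. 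Since $\mu_{s_{0}}$ is the push-forward of Lebesgue measure on $\{|\zeta_{1}|=1\}$ by $\zeta_{1} \mapsto (\zeta_{1}, R(\zeta_{1})/s_{0})$, its support equals the closure of the measurable graph $\Gamma_{s_{0}} := \{(\zeta_{1}, R(\zeta_{1})/s_{0}) : \zeta_{1} \in S^{1}\}$. It therefore suffices to prove $\overline{\Gamma_{s_{0}}} = T^{2}$: then $X = T^{2}$ and the orbit of $x_{0}$ is dense.

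Density of $\Gamma_{s_{0}}$ in $T^{2}$ is equivalent to the assertion that for every non-empty open arc $I \subset \{|\zeta_{1}|=1\}$ the essential range of $R|_{I}$ equals all of $S^{1}$. To establish this, I would use the cocycle identity $R(e^{2\pi i\alpha}\zeta) = g(\zeta)R(\zeta)$, which shows that the \emph{local essential range}
$$
E(\zeta^{*}) := \bigcap_{\delta > 0}\overline{\mathrm{ess.range}\bigl(R|_{B(\zeta^{*},\delta)}\bigr)}
$$
satisfies $E(e^{2\pi i\alpha}\zeta^{*}) = g(\zeta^{*})\cdot E(\zeta^{*})$; since $R_{\alpha}$ has dense orbits and $g$ is continuous, $E(\cdot)$ is everywhere a rotate of a single fixed closed subset of $S^{1}$, and in particular $|E|$ is independent of $\zeta^{*}$. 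The lacunary Fourier expansion $H(\theta) = \sum_{k\neq 0} \frac{1}{|k|} e^{2\pi i n_{k} \theta}$, with frequencies $n_{k}=2^{v_{k}}$ growing faster than any tower, then forces wild oscillation of $H$ on any arc: a Zygmund-type estimate, or a direct scale-by-scale computation exploiting the super-lacunarity $v_{k+1} \geq k\cdot 2^{v_{k}}$, yields $E(\zeta^{*}) = S^{1}$ at some, and hence every, point.

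The main obstacle is this last step: ruling out that $R$ could have proper essential range on some arbitrarily small arc. The technical heart is therefore a quantitative oscillation estimate for the lacunary series $H$ on short intervals; once that is in hand, the density of $\Gamma_{s_{0}}$ follows, hence $X = T^{2}$, and minimality of $T_{\alpha}$ on the real torus is established.
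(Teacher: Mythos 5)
Your high-level plan---pass from orbit closures to invariant measures via Krylov--Bogolyubov, apply the ergodic decomposition and the classification of extremals to land on a single graph measure $\mu_{s_0}$, and then show that the (measurable) graph $\Gamma_{s_0}$ is dense in $T^2$---is a genuinely different route from the paper's. The paper argues directly at the orbit level: it chooses exponents $m_s = n_{s+1}/(2^4 n_s)$, shows that $m_s\alpha$ is nearly integral (so $T_\alpha^{m_s}$ barely moves the first coordinate), and then estimates that the same iterate rotates the second coordinate by a controlled increment $u$ with $-b/s \le u \le -a/s$ provided $\zeta_1$ is close to a $n_s$-th root of unity; compounding these small rotations over $s, s+1, s+2, \dots$ sweeps the second coordinate around the circle while pinning the first, and density follows from the equidistribution of the irrational rotation. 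Your approach trades this explicit dynamical bookkeeping for a reduction to a statement in harmonic analysis: that the measurable function $R = e^{2\pi i H}$ has full essential range on every arc of $S^1$.

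That last step is exactly where your proof has a genuine gap, and you concede as much when you call it ``the technical heart.'' You do not supply it: the assertion that ``a Zygmund-type estimate, or a direct scale-by-scale computation exploiting the super-lacunarity'' forces $E(\zeta^*) = S^1$ is a plan, not an argument. It is also not automatic. The coefficients $1/|k|$ are square-summable, so $H \in L^2$, and unboundedness of $H$ on every subarc is a nontrivial consequence of lacunarity; but even essential unboundedness of $H$ on every arc is strictly weaker than what you need, namely that $H \bmod 1$ has dense essential range on every arc. A priori the partial sums could oscillate in a way that misses a fixed interval modulo $1$ on some small arc, and ruling this out requires exactly the kind of quantitative control the paper carries out in its estimate of the increment $u$ and the recursion for $r^{(j)}$. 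There is also a secondary loose end in your cocycle step: from $E(e^{2\pi i\alpha}\zeta^*) = g(\zeta^*)E(\zeta^*)$ and density of the rotation orbit you conclude that $E$ is everywhere a rotate of a single fixed closed set, but $E(\cdot)$ is not obviously semicontinuous, so passing from a dense orbit to all of $S^1$ needs justification. In short, your framework is sound and could in principle work, but the proposition is not proved until the oscillation estimate for $H$ on short arcs is actually established; the paper's lemmas on $m_s$ and on the evolution of $r^{(j)}$ are precisely doing that work at the level of the dynamics, and your proposal defers rather than replaces it.
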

\begin{remark}
Furstenberg defines the function $H$ by $H(\Theta):=\underset{k\neq 0}{\sum}
\frac{1}{|k|}\mathrm{e}^{2\pi i n_{k}\lambda\Theta}$ for some unknown 
$\lambda$ in order to conclude that the function $R$ is merely measurable, not 
continuous.  As remarked before this is  already implied by the density of orbits.  
\end{remark}

\begin{proof}

We will prove this through a sequence of lemmas, and we start by 
calculating the iterates $T_{\alpha}^n$.

\begin{lemma}  
First 
$$
\sum_{j=0}^{n-1}h(\theta+j\alpha)= \sum_{k \neq 0} \frac{1}{|k|} e^{2\pi i n_k 
\theta}[ e^{2\pi i n_k n \alpha}-1]
$$
and so for $\zeta_1=\mathrm{e}^{2\pi i\theta}$ we have that 
$$
T_{\alpha}^{n}(\zeta_1,\zeta_2)=(e^{2\pi i n \alpha}
\zeta_1, e^{2\pi i \sum_{k \neq 0} \frac{1}{|k|} e^{2\pi i n_k 
\theta}[ e^{2\pi i n_k n \alpha}-1]}\zeta_2)
$$
\end{lemma}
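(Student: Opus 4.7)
The plan is to substitute the series expansion of $h$ into the sum, swap the order of summation, and recognize the inner sum as a geometric series. Writing
$$
h(\theta+j\alpha) \;=\; \sum_{k\neq 0}\frac{1}{|k|}\bigl(e^{2\pi i n_k\alpha}-1\bigr)\,e^{2\pi i n_k\theta}\,e^{2\pi i n_k j\alpha},
$$
the swap of the $k$- and $j$-sums is justified by the bound $|e^{2\pi i n_k\alpha}-1|\leq 2\pi/2^{k n_k}$, which is essentially what was already used to show that $h$ is holomorphic on $\mathbb C^*$; this gives absolute convergence of the $k$-series uniformly over the finitely many shifted arguments $\theta,\theta+\alpha,\dots,\theta+(n-1)\alpha$. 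Since $\alpha$ is irrational, $n_k\alpha\notin\mathbb Z$ and the inner sum is the geometric sum
$$
\sum_{j=0}^{n-1} e^{2\pi i n_k j\alpha} \;=\; \frac{e^{2\pi i n_k n\alpha}-1}{e^{2\pi i n_k\alpha}-1}.
$$
The factors $e^{2\pi i n_k\alpha}-1$ then cancel, giving
$$
\sum_{j=0}^{n-1} h(\theta+j\alpha) \;=\; \sum_{k\neq 0}\frac{1}{|k|}\,e^{2\pi i n_k\theta}\,\bigl[e^{2\pi i n_k n\alpha}-1\bigr],
$$
which is the first claimed identity.

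For the formula for $T_\alpha^n$, a straightforward induction on $n$ from $T_\alpha(\zeta_1,\zeta_2)=(e^{2\pi i\alpha}\zeta_1,g(\zeta_1)\zeta_2)$ yields
$$
T_\alpha^n(\zeta_1,\zeta_2) \;=\; \Bigl(e^{2\pi i n\alpha}\zeta_1,\;\prod_{j=0}^{n-1} g\bigl(e^{2\pi i j\alpha}\zeta_1\bigr)\cdot\zeta_2\Bigr).
$$
Since $g(e^{2\pi i\theta})=e^{2\pi i h(\theta)}$, the product of the $g$'s is $\exp\bigl(2\pi i\sum_{j=0}^{n-1} h(\theta+j\alpha)\bigr)$, and plugging in the identity from the previous paragraph produces the stated second coordinate. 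The only nontrivial point in the whole argument is the interchange of summation; once it is justified, the rest is a direct computation and an induction.
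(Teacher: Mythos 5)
Your proof is correct and takes essentially the same approach as the paper: substitute the series for $h$, interchange the order of summation, and evaluate the inner $j$-sum in closed form. The paper evaluates $\sum_{j=0}^{n-1}(e^{2\pi i n_k\alpha}-1)e^{2\pi i n_k j\alpha}$ as a telescoping sum rather than via the geometric-series formula (thereby avoiding the need to observe $e^{2\pi i n_k\alpha}\neq 1$), but this is a cosmetic difference; your explicit justification of the interchange, and the induction for the $T_\alpha^n$ formula, are sound and simply spell out what the paper leaves implicit.
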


\begin{proof}

\bea
\sum_{j=0}^{n-1}h(\theta+j\alpha) & = &
\sum_{j=0}^{n-1} \sum_{k\neq 0} 
\frac{1}{|k|} (e^{2\pi i n_k \alpha}-1)e^{2\pi i n_k(\theta+j\alpha)}\\
& = & \sum_{k \neq 0} \frac{1}{|k|} 
e^{2\pi i n_k \theta} \sum_{j=0}^{n-1}
(e^{2\pi i n_k \alpha}-1)e^{2\pi i n_k j \alpha}\\
& = & \sum_{k \neq 0} \frac{1}{|k|} 
e^{2\pi i n_k \theta}[ e^{2\pi i n_k n \alpha}-1]\\
\eea

\end{proof}

Set $m_s:=
\frac{n_{s+1}}{2^4 n_s}$.   

Then $m_{s}\cdot\alpha -[m_{s}\cdot\alpha]=\frac{2^{v_{s+1}}}{2^4 2^{v_{s}}}
\cdot\sum_{j=s+1}^\infty 2^{-v_{j}}<\frac{1}{2^{v_s+3}}<\frac{1}{2^s}$. 
Then for any point $(\zeta_{1},\zeta_{2})$ we have for 
$(\zeta_{1}^{s,l},\zeta_{2}^{s,l}):=T_{\alpha}^{m_{s+l}}\circ T_{\alpha}^{m_{s+l-1}}
\circ\cdot\cdot\cdot\circ T_{\alpha}^{m_{s}}(\zeta_{1},\zeta_{2})$ that 

\begin{itemize}
\item[$(*)$] $|\zeta_{1}-\zeta_{1}^{s,l}|<2\pi\sum_{j=s}^\infty 2^{-j}$.
\end{itemize}

Hence under these iterates the first coordinate does not change much.  Our goal 
now is to show that (under some assumption on $\zeta_{1}$ and the size of $s$) 
for the same iterates  
the second coordinate rotates around the circle with very small intervals, \emph{i.e.}, 
we can get very near to any point.  \

Let $\theta=\frac{r}{n_s}$ for some $r\in\mathbb R$.

\begin{lemma}\label{int}
There are fixed constants $0<a<b$, $c>0$ and $N\in\mathbb N$ such that if 
$|r|<c$ mod $\mathbb Z$ and $s\geq N$ then
$$
-\frac{b}{s}\leq \sum_{k \neq 0} 
\frac{1}{|k|}e^{2\pi i \frac{n_k r}{n_s}}(e^{2\pi i n_k 
\frac{n_{s+1}}{2^4 n_s}\alpha }-1)\le -\frac{a}{s}.
$$
Hence, for $\zeta_{1}=\mathrm{e}^{2\pi i\Theta}$, 
$$
T_{\alpha}^{m_{s}}(\zeta_1,\zeta_2)=(\zeta_1',\zeta_2')=
\left(e^{2\pi i m_{s}
\sum_{k=s+1}^\infty \frac{1}{n_k}}\zeta_1,e^{2\pi i u}\zeta_2
\right),
-\frac{b}{s}\leq u \leq -\frac{a}{s}.
$$
\end{lemma}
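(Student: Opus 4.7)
The plan is to split the Fourier-like sum
$$
S:=\sum_{k\neq 0}\frac{1}{|k|}e^{2\pi i n_k r/n_s}\bigl(e^{2\pi i n_k m_s\alpha}-1\bigr)
$$
into three ranges of $k$, namely $0<|k|<s$, $|k|=s$, and $|k|>s$, and show that the $|k|=s$ terms produce a main term of size $1/s$ with the correct sign while the other two ranges contribute only $o(1/s)$. The key arithmetic input is that $m_s=n_{s+1}/(2^4 n_s)=2^{v_{s+1}-v_s-4}$ is a large positive integer for $s\geq N$, since $v_{s+1}-v_s=s\cdot 2^{v_s}+1\to\infty$, and moreover $m_s/n_j=2^{v_{s+1}-v_s-4-v_j}$ is integral for every $j\leq s$ once $s$ is large. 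Using $\alpha=\sum_{j\geq 1}1/n_j$, we split $n_k m_s\alpha=\sum_j n_k m_s/n_j$ into integer and fractional parts according to the relative sizes of $k$ and $j$.

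For $|k|>s$, every term with $j\leq k$ is an integer (since both $m_s$ and $n_k/n_j=2^{v_k-v_j}$ are integers), while the tail satisfies $\sum_{j>k} n_k m_s/n_j\leq 2m_s\cdot 2^{-k\cdot 2^{v_k}-1}$, which is super-exponentially small; hence $|e^{2\pi i n_k m_s\alpha}-1|$ is super-exponentially small, and this range's total contribution is far smaller than $1/s$. For $0<|k|<s$, all terms with $j\leq s$ are integers, the $j=s+1$ term equals $n_k/(2^4 n_s)=2^{v_k-v_s-4}$, and the $j>s+1$ tail is bounded by $2^{-(s+1)n_{s+1}}$. Using $|e^{2\pi i K}-1|\leq 2\pi|K|$, this range contributes at most
$$
2\pi\sum_{0<|k|<s}\frac{n_{|k|}}{|k|\cdot 2^4 n_s}+O\bigl(2^{-(s+1)n_{s+1}}\bigr),
$$
which is dominated by the $|k|=s-1$ term and is $o(1/s)$ thanks to the huge gap $v_s-v_{s-1}=(s-1)2^{v_{s-1}}+1$.

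The main term comes from $|k|=s$. Here $n_s m_s\alpha=n_{s+1}\alpha/2^4$; writing $n_{s+1}\alpha=[n_{s+1}\alpha]+\varepsilon_{s+1}$ with $0<\varepsilon_{s+1}<2^{-(s+1)n_{s+1}}$, the integer part $[n_{s+1}\alpha]=\sum_{j=1}^{s+1}2^{v_{s+1}-v_j}$ has every $j\leq s$ summand divisible by $2^4$ (because $v_{s+1}-v_s\geq 4$ for $s\geq N$) while the $j=s+1$ summand equals $1$. Therefore $[n_{s+1}\alpha]\equiv 1\pmod{16}$ and $n_s m_s\alpha\equiv 1/16\pmod 1$ up to the error $\varepsilon_{s+1}/16$. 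Since $S$ is real (the coefficients for $\pm k$ are complex conjugates), combining the $k=s$ and $k=-s$ contributions and applying the sum-to-product identity yields
$$
\frac{2}{s}\bigl[\cos(2\pi r+\pi/8)-\cos(2\pi r)\bigr]+O\bigl(2^{-(s+1)n_{s+1}}\bigr)=-\frac{4\sin(\pi/16)}{s}\sin\bigl(2\pi r+\pi/16\bigr)+O\bigl(2^{-(s+1)n_{s+1}}\bigr).
$$

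To conclude, choose $c>0$ so small that $|r|<c$ modulo $\mathbb{Z}$ forces $\sin(2\pi r+\pi/16)$ to stay in a compact subinterval of $(0,1]$; the main term then lies in an interval $[-B/s,-A/s]$ for fixed constants $0<A<B$. Picking $0<a<A$ and $b>B$ and enlarging $N$ enough to absorb the $o(1/s)$ contributions from $|k|\neq s$ gives $-b/s\leq S\leq -a/s$ for all $s\geq N$. The stated form of the first coordinate of $T_\alpha^{m_s}$ follows at once from the fact that $m_s\sum_{j=1}^s 1/n_j\in\mathbb{Z}$, which gives $e^{2\pi i m_s\alpha}=e^{2\pi i m_s\sum_{k\geq s+1}1/n_k}$. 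The main delicate point of the proof is the modular computation $[n_{s+1}\alpha]\equiv 1\pmod{16}$ that pins down the specific constant $1/16$ and thereby the sign and magnitude of the main term; all other estimates are routine once the super-exponential growth of $v_{k+1}-v_k$ is exploited.
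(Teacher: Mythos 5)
Your proof follows the same three-way decomposition of the sum (ranges $0<|k|<s$, $|k|=s$, $|k|>s$) and hinges on the same arithmetic observation as the paper, namely that the fractional part of $n_s m_s\alpha$ is $\tfrac{1}{16}$ up to a super-exponentially small error, so the approach is essentially identical. The only cosmetic difference is that you retain the $r$-dependence inside $\cos(2\pi r+\pi/8)-\cos(2\pi r)$ via the sum-to-product identity, whereas the paper evaluates at $r=0$ and isolates the $r$-dependent correction as a separate error term $E$; both yield the same main term $-4\sin^2(\pi/16)/s$ at $r=0$ and the same conclusion.
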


\begin{proof}
Write $T^{m_{s}}_{\alpha}(\zeta_{1},\zeta_{2})=(A_{s},B_{s})$.  By 
the previous lemma we have that 
$$
B_{s}=
\zeta_2{\mbox{exp}}\left[2\pi i\sum_{k \neq 0}\frac{1}{|k|}
e^{2\pi i n_k \frac{r}{n_s}}\left(e^{2\pi i n_k \frac{n_{s+1}}{2^4 n_s} 
\alpha}-1\right)\right]
$$
To estimate the size of $u$ as defined above we consider different 
parts of the sum inside the bracket separately.  For $s>|k|$ we will use that 

\bea
\left|\frac{1}{|k|}
e^{2\pi i n_k \frac{r}{n_s}}\left(e^{2\pi i n_k \frac{n_{s+1}}{2^4 n_s}\alpha}
-1\right)\right| &\leq &
\frac{1}{|k|} 2\pi \frac{n_k n_{s+1}}{2^4 n_s}\sum_{t=s+1}^\infty \frac{1}{n_t}\\
& \leq & \frac{n_k}{n_s},\\
\eea
and for $s<|k|$ we will use that 
\bea
\left|\frac{1}{|k|}
e^{2\pi i n_k \frac{r}{n_s}}\left(e^{2\pi i n_k \frac{n_{s+1}}{2^4 n_s} \alpha}
-1\right)\right|
& \leq & \frac{1}{|k|}2\pi \frac{n_{s+1}}{2^4n_s}n_{k}\sum_{t=k+1}^\infty \frac{1}{n_t}\\
& \leq & \frac{1}{|k|}2\pi \frac{n_{s+1}}{2^4n_s}\frac{n_{k}}{n_{k+1}}
\sum_{t=k+1}^\infty \frac{n_{k+1}}{n_{t}}\\
& \leq & \frac{n_{s+1}\cdot n_{k}}{n_s\cdot n_{k+1}}
\eea
Write 
$$
\sigma_{s}= \sum_{k \neq 0,|k|\neq |s|} 
\frac{1}{|k|}e^{2\pi i \frac{n_k r}{n_s}}(e^{2\pi i n_k 
\frac{n_{s+1}}{2^4 n_s}\alpha }-1)
$$
Then $|\sigma_{s}|<s\cdot 2^{1-s}$, and so there exists an $N\in\mathbb N$
such that if $s\geq N$ then 
$|\sigma_{s}|<\frac{1}{2s}(1-\mathrm{cos}(\frac{\pi}{8}))$ whenever $\delta$
is small enough:
We have that $\sum_{k=1}^{s-1}\frac{n_{k}}{n_{s}}\leq (s-1)\frac{n_{s-1}}{n_{s}}<(s-1)2^{-s}$, and   
furthermore that $\sum_{k=s+1}^\infty\frac{n_{s+1}\cdot n_{k}}{n_s\cdot n_{k+1}}
\leq\frac{1}{2^s}\sum_{k=s+1}^\infty\frac{1}{2^k}<\frac{1}{2^s}$.

To conclude the proof of the lemma we need an estimate for
$|k|=|s|$: 

\bea
& & \frac{1}{s}
e^{2\pi i n_s \frac{r}{n_s}}\left(e^{2\pi i n_s \frac{n_{s+1}}{2^4 n_s} \alpha}-1\right)\\
& + & \frac{1}{s}
e^{2\pi i (-n_s) \frac{r}{n_s}}\left(e^{2\pi i (-n_s) \frac{n_{s+1}}{2^4 n_s} \alpha}-1\right)\\
& = & \frac{1}{s}\left[e^{2\pi i r} \left (e^{2\pi i \frac{n_{s+1}}{2^4 } \alpha}-1\right)
 +  
e^{2\pi i (-r)}(e^{2\pi i  \frac{-n_{s+1}}{2^4 } \alpha}-1)\right]\\
& = &  \frac{1}{s}\left[ \left (e^{2\pi i \frac{n_{s+1}}{2^4 } \alpha}-1\right)
 +  
(e^{2\pi i  \frac{-n_{s+1}}{2^4 } \alpha}-1)\right]\\
& + &  \frac{1}{s}\left[(e^{2\pi i r}-1) \left (e^{2\pi i \frac{n_{s+1}}{2^4 } \alpha}-1\right)
 +  
(e^{2\pi i (-r)}-1)(e^{2\pi i  \frac{-n_{s+1}}{2^4 } \alpha}-1)\right]\\
& = & \frac{2}{s} [\cos(2\pi \frac{n_{s+1}}{2^4}\alpha)-1]+ E\\
& = & \frac{2}{s}[\cos \left( \frac{2\pi}{2^4}\left(1+\sum_{t=s+2}^\infty 
\frac{n_{s+1}}{n_t}\right)\right)-1] + E\\
|E| & = & \left|\frac{1}{s}\left[(e^{2\pi i r}-1) ((e^{2\pi i \frac{n_{s+1}}{2^4 } \alpha})-1)
 +  
(e^{2\pi i (-r)}-1)(e^{2\pi i  \frac{-n_{s+1}}{2^4 } \alpha}-1)\right]\right|\\
& \leq & \frac{1}{s} [2|e^{2\pi ir}-1|+2|e^{-2\pi i r} -1|]\\
\eea

If $|r|$ is small enough mod $\mathbb Z$ we see that $|E|<\frac{1}{2s}(1-\mathrm{cos}(\frac{\pi}{8}))$
and by possibly having to increase $N$ we may put $a\approx(1-\mathrm{cos}(\frac{\pi}{8}))$
and $b\approx3(1-\mathrm{cos}(\frac{\pi}{8}))$
\end{proof}

\begin{lemma}
There exists a constant $C$ such that the following holds.  
Let $r\in\mathbb R$ be some number, write $\Theta=\frac{r+k}{n_{s}}, k\in\mathbb Z$
and $\zeta_{1}^0=\mathrm{e}^{2\pi i\Theta}$.  Let $\zeta_{2}^0\in S^1$ 
be arbitrary.  Define $\zeta_{1}^j$ inductively by $(\zeta_{1}^j,\zeta_{2}^j):=
T^{m_{s+j-1}}(\zeta_{1}^{j-1},\zeta_{2}^{j-1})$.  Write $\zeta_{1}^j=\mathrm{e}^{2\pi i\Theta_{j}}$
and $\Theta_{j}=\frac{r^{(j)}}{n_{s+j}}$.  Then 
$$
r^{(j)}\underset{\mathrm{\mod}\mathbb Z}{=}r\frac{n_{s+j}}{n_{s}}+\nu_{j}
$$
where $\nu_{j}<C2^{-s}$.  In particular, if $s$ is big enough and $r=0$,
then $r^{(j)}<c$ for all $j$ (c is the constant from the previous lemma).
\end{lemma}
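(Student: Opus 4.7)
The plan is to follow the first coordinate through the composition $T_\alpha^{m_{s+j-1}}\circ\cdots\circ T_\alpha^{m_s}$ and to extract the mod-$\mathbb{Z}$ error from the explicit arithmetic form of $\alpha$. Since $T_\alpha^n$ acts on the first coordinate as $\zeta_1\mapsto e^{2\pi in\alpha}\zeta_1$, the angles satisfy $\Theta_j \equiv \Theta + \sum_{i=0}^{j-1} m_{s+i}\alpha \pmod{\mathbb Z}$. Multiplying by $n_{s+j}$ and substituting $\Theta = (r+k)/n_s$ yields
\[
r^{(j)}\;\equiv\;r\,\frac{n_{s+j}}{n_s}+k\,\frac{n_{s+j}}{n_s}+n_{s+j}\sum_{i=0}^{j-1}m_{s+i}\,\alpha\pmod{\mathbb Z}.
\]
Because $n_{s+j}/n_s=2^{v_{s+j}-v_s}$ is a positive integer, the $k$-term is itself an integer and drops out; the problem reduces to bounding $\nu_j:=n_{s+j}\sum_{i=0}^{j-1}m_{s+i}\,\alpha\bmod\mathbb Z$.

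For the bound, the next step is to expand $n_{s+j}\,m_{s+i}\,\alpha=\sum_{t\ge1}n_{s+j}\,n_{s+i+1}/(16\,n_{s+i}\,n_t)$, each summand equal to $2^{v_{s+j}+v_{s+i+1}-v_{s+i}-v_t-4}$. Using the recursion $v_{s+i+1}-v_{s+i}=(s+i)2^{v_{s+i}}+1$, the exponent is nonnegative exactly when $t\le s+j$, so those summands are integers; only the tail $t\ge s+j+1$ contributes mod $\mathbb Z$. A geometric-type estimate on the tail then gives $|n_{s+j}\,m_{s+i}\,\alpha\bmod\mathbb Z|\le 2^{(s+i)2^{v_{s+i}}-(s+j)2^{v_{s+j}}-2}$. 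Setting $a_i:=(s+i)2^{v_{s+i}}$, the recursion yields $a_{i+1}=2\,\frac{s+i+1}{s+i}\,a_i\,2^{a_i}$, so $(a_i)$ grows tower-exponentially and $a_j-a_{j-1}\ge 2^{a_{j-1}}\ge 2^{s\cdot 2^{v_s}}$, using the easy induction $v_s\ge s$. Summing the bound over $i\le j-1$ and using this super-exponential spacing gives $|\nu_j|\le C\cdot 2^{-s}$ uniformly in $j$ for some absolute constant $C$.

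For the particular case $r=0$, the estimate above collapses to $r^{(j)}\equiv\nu_j\pmod{\mathbb Z}$; picking $s$ so that $C\cdot 2^{-s}<c$ immediately yields $|r^{(j)}\bmod\mathbb Z|<c$. The main obstacle is the careful bookkeeping of integer versus non-integer contributions in the double series $\sum_i\sum_t$; once the key identity $v_{s+i+1}-v_{s+i}=(s+i)2^{v_{s+i}}+1$ cleanly separates the two regimes, the tower-exponential growth of $a_i$ makes the remaining summation essentially routine.
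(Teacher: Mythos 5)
Your proof is correct and follows essentially the same route as the paper: track the first coordinate through the $T_\alpha^{m_{s+i}}$ iterates, expand $n_{s+j}\,m_{s+i}\,\alpha$ in powers of $2$ via the explicit form of $\alpha$, use the recursion for $v_{k+1}-v_k$ to separate integer contributions ($t\le s+j$) from fractional ones ($t\ge s+j+1$), and bound the fractional tail by the tower-like growth of $(v_k)$. The only cosmetic difference is that you compute $\Theta_j$ directly rather than verifying the closed form $(*)$ by induction, and you make explicit the (easy) point that the $k$-term vanishes mod~$\mathbb Z$ because $n_{s+j}/n_s$ is an integer.
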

\begin{proof}
We claim first that 
$$
(*) \  r^{(j)}=r\frac{n_{s+j}}{n_{s}}+\sum_{l=1}^j [\frac{n_{s+l}}{2^4 n_{s+l-1}}
\sum_{t=s+l}^\infty\frac{n_{s+j}}{n_{t}}].
$$
We show this by induction, so assume that it holds for some $j$.  Then 
$$
\Theta_{j+1}=\Theta_{j} + \frac{n_{s+j+1}}{2^4 n_{s+j}}
\sum_{t=s+j+1}^\infty n_{t}^{-1} = \frac{r^{(j)}}{n_{s+j}}+
\frac{n_{s+j+1}}{2^4 n_{s+j}}
\sum_{t=s+j+1}^\infty n_{t}^{-1}, 
$$
and so 
\bea
r^{(j+1)} & = & \Theta_{j+1}\cdot n_{s+j+1}\\
& = & 
(r\frac{n_{s+j+1}}{n_{s}}+\sum_{l=1}^j [\frac{n_{s+l}}{2^4 n_{s+l-1}}
\sum_{t=s+l}^\infty\frac{n_{s+j+1}}{n_{t}}])+ 
\frac{n_{s+j+1}}{2^4 n_{s+j}}\sum_{t=s+l+1}^\infty\frac{n_{s+j+1}}{n_{t}}.
\eea
Now write $r^{(j)}=r\frac{n_{s+j}}{n_{s}}+\nu_{j}$ according to $(*)$.  Then 
mod$\mathbb Z$ we have that 
$$
\nu_{j}< j\cdot\frac{n_{s+j}^2}{2^3\cdot n_{s+j-1}\cdot n_{s+j+1}}<j\cdot 2^{-j}\cdot 2^{-s}.
$$
\end{proof}

\medskip

To finish the proof of Proposition 11, let $p\in \mathbb T^2$ and let $\mathcal O$ denote the closure of the orbit of $p$. We want to show that an arbitrary point $\zeta'=(\zeta_1',\zeta_2')=
(e^{2\pi i \Theta_1'},e^{2\pi i \Theta_2'})$ lines in $\mathcal O,$ i.e. for any $\epsilon>0$ we need to find a point in $\mathcal O$ closer to $\zeta'$ than $\epsilon.$ 

Let $s \geq N$ be big enough such that $\frac{b}{s}<\frac{\epsilon}{4\pi}$ and such that
$\sum_{j=s}^\infty 2^{-s}<\frac{\epsilon}{8\pi}.$ Note that by the irrationality of $\alpha$, the closure of the orbit of $p$ must contain a point $(\hat{\zeta}_1,\hat{\zeta}_2)$ for any $|\hat{\zeta}_1|=1.$ 
Let $(\zeta_1,\zeta_2)$ denote any point in $\mathcal O$ with 
$\zeta_1=e^{2\pi i \theta}, \theta=n/n_s$
with $n\in \mathbb Z$ such that $|\zeta_1-\zeta_1'|<\epsilon/4.$

Next we define 

$$
(\tilde{\zeta}_1,\tilde{\zeta}_2)=(e^{2\pi i \tilde{\Theta}_1},e^{2\pi i \tilde{\Theta}_2})
= T^{m_s}(\zeta_1,\zeta_2)$$

\noindent and  let

$$
(\tilde{\zeta}_1^\ell,\tilde{\zeta}_2^\ell)= T^{m_{s+\ell}}\circ \cdots \circ T^{m_{s+1}}(\tilde{\zeta}_1,\tilde{\zeta}_2).
$$

Then

\bea
|\tilde{\zeta}_1^\ell-\zeta_1'| & \leq & |\tilde{\zeta}_1^\ell-\zeta_1|+|\zeta_1-\zeta_1'|\\
& < & 2\pi \sum_{j=s}^\infty 2^{-j}+\frac{\epsilon}{4} \leq \frac{\epsilon}{2}.\\
\eea

Moreover, if $\tilde{\zeta}_2^{\ell}= e^{2\pi i \theta_\ell},$ then $\theta_\ell=\tilde{\theta}_2+\sum_{j=1}^\ell u_j$ with $-\frac{b}{s+j-1}<u_j<-\frac{a}{s+j-1}.$ It follows that for some $\ell$,
$|\tilde{\zeta}_2^\ell-\zeta_2'|<\frac{\epsilon}{2}.$\\

\end{proof}

Remark on the Furstenberg map: For a given sequence $\{n_k\}$ we can construct a family 
$T_\alpha$ of Furstenberg maps when $\alpha$ is in a dense $G_\delta$ set on the circle and all maps commute. In particular they have the same invariant function $f.$ If a homeomorphism commutes with some $T_\alpha$, then it is one of these.

\begin{thm}
There is a minimal lamination $\mathcal F$ by 
Riemann surfaces which admit uncountably many extremal closed currents which are 
mutually singular. Moreover every positive $\partial \overline{\partial}$-closed 
current is closed. The lamination can be extended holomorphically to a holomorphic bundle 
with $\mathbb C^*\times \mathbb P^1$ fiber and a given compact Riemann surface of genus $g \geq 2$ as a base. The minimal set is a compact CR manifold.
\end{thm}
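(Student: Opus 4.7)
The plan is to realize the lamination as a holomorphic suspension over a fixed compact Riemann surface $S$ of genus $g\geq 2$. Let $\mathbb D\overset{\pi}{\rightarrow}S$ be the universal cover and $\Gamma=\pi_1(S)$ the deck group. Pick a surjective homomorphism $\rho:\Gamma\rightarrow\mathbb Z$, possible since $H_1(S,\mathbb Z)\cong\mathbb Z^{2g}$. Because $g_\alpha$ is holomorphic on $\mathbb C^*$, the Furstenberg map $T_\alpha$ extends to a holomorphic automorphism of $\mathbb C^*\times\mathbb P^1$; set $F(\gamma):=T_\alpha^{\rho(\gamma)}$ and form the suspension
$$
Y:=(\mathbb D\times\mathbb C^*\times\mathbb P^1)/\tilde\Gamma,\qquad \tilde\gamma\cdot(\tilde s,x):=(\gamma\tilde s,F(\gamma)x),
$$
a holomorphic $(\mathbb C^*\times\mathbb P^1)$-bundle over $S$ foliated by Riemann surfaces. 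Since $T_\alpha$ preserves the torus $T^2=\{|\zeta_1|=|\zeta_2|=1\}$, the saturation $\mathcal M\subset Y$ of $T^2$ is a well-defined compact CR-submanifold carrying an induced lamination $\mathcal F$.

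Minimality of $\mathcal F$ is immediate from Proposition \ref{dense}: because $\rho$ is surjective, $F(\Gamma)=\langle T_\alpha\rangle$, and the intersection of each leaf with each fibre $T^2$ is a $T_\alpha$-orbit, which is dense. For the uncountable family of extremal closed currents, take the measures $\{\mu_{s_0}\}_{s_0\in S^1}$ produced in the Corollary, each supported on the graph $\{f=s_0\}\subset T^2$. Their $T_\alpha$-invariance makes them holonomy-invariant transverse measures for $\mathcal F$, and integrating $[B]$ against $\mu_{s_0}$ in flow boxes yields a globally well-defined closed directed current $T_{s_0}$ on $\mathcal M$. Extremality and mutual singularity of the $T_{s_0}$ follow directly from the corresponding properties of the $\mu_{s_0}$, whose supports are pairwise disjoint.

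For the assertion that every positive directed $\partial\overline\partial$-closed current is closed, let $T$ be such a current and write it locally as $T=\int h_t[B]\,d\mu(t)$ using Proposition \ref{local}, with $h_t>0$ harmonic on each plaque. Gluing across flow boxes forces $\mu$ to be quasi-invariant on $T^2$ under the holonomy $T_\alpha$ and the family $\{h_t\}$ to satisfy the matching cocycle identity along the holonomy. The crucial feature of the Furstenberg construction is that $T_\alpha$ preserves Haar measure on $T^2$ (so quasi-invariance reduces to invariance and the cocycle trivializes) and, restricted to any level set $\{f=s_0\}$, acts as an irrational rotation which is uniquely ergodic. Disintegrating $\mu$ over $f$ therefore reduces the problem to a single extremal invariant measure $\mu_{s_0}$; the cocycle identity together with the unique ergodicity of the rotation on $\{f=s_0\}$ then forces $h_t$ to be constant in $t$. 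The current $T$ is thus a positive superposition of the closed currents $T_{s_0}$, hence closed.

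The main obstacle is this last step. Construction of $Y$, $\mathcal M$ and the family $\{T_{s_0}\}$ is routine once Proposition \ref{dense} and the Corollary are in hand, but the abstract passage from $\partial\overline\partial$-closedness to closedness is delicate and does not hold for general holomorphic suspensions. It is driven here by the combination of the isometric nature of the Furstenberg dynamics on $T^2$ and the unique ergodicity on each extremal fibre $\{f=s_0\}$, both of which are special features of this particular construction rather than generic properties of laminations by Riemann surfaces.
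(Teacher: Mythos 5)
Your construction of $Y$ and $\mathcal M$ coincides with the paper's: the surjection $\rho:\Gamma\to\mathbb Z$ you use is the same representation as the paper's ``$F_1=$ Furstenberg map, $F_i=\mathrm{Id}$ for $i\geq 2$'' (both factor through $H_1(S,\mathbb Z)\to\mathbb Z$), and the production of the uncountable family $\{T_{s_0}\}$ from the invariant measures $\mu_{s_0}$ is exactly what the paper does (drawing on the Corollary about extremal measures and the density of orbits). So everything up to that point is correct and essentially identical to the paper.

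The gap is the passage from $\partial\overline\partial$-closedness to closedness, which the paper does \emph{not} argue directly in the proof of this theorem; it explicitly defers it to the final Corollary of Section~5, whose proof rests on the entirely different machinery of Theorem~9. Your direct argument does not go through. The claim ``$T_\alpha$ preserves Haar measure on $T^2$, so quasi-invariance reduces to invariance and the cocycle trivializes'' is a non sequitur: for a $\partial\overline\partial$-closed directed current, the transversal measure $\mu$ is arbitrary (not Haar), the relevant Radon--Nikodym cocycle involves $T_\alpha^*\mu$ against $\mu$, and the holonomy distortion is absorbed into the boundary values of the positive harmonic densities $h_t$. The invariance of Haar tells you nothing about this cocycle; and unique ergodicity of the rotation on $\{f=s_0\}$ constrains the space of invariant \emph{measures} on the fibre, not the space of positive harmonic functions on the leaves. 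The leaves here are covered by the disc (the base has genus $g\geq 2$), which carries an abundance of non-constant positive harmonic functions, so there is no cheap rigidity. Indeed the paper explicitly contrasts this with the torus base: in the remark following Theorem~6 it observes that when $g=1$ the claim is trivial since $\tilde S=\mathbb C$ has no non-constant positive harmonic functions, which is precisely why the $g\geq 2$ case requires a different idea. That idea, in Section~5, is the functional $I(T)=\int i\,\tau\wedge\overline\tau\wedge T$, which is affine and continuous on the compact convex set of directed $\partial\overline\partial$-closed currents, is invariant under the commutative group $G=\langle T_\alpha\rangle$ of holonomy maps, and via Kakutani's fixed point theorem admits a $G$-invariant maximizer which is necessarily closed; hence the maximum is $0$ and all the currents are closed. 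The operative ``special feature'' is therefore the commutativity (amenability) of the holonomy group, not the isometric fibre dynamics or the unique ergodicity on level sets that you invoke.
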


\begin{proof}
Take a Riemann surface $S$ with genus $g \geq 2.$ Let $(\gamma_i)$ be a basis for 
$\pi_1(S).$  We can assume that $\gamma_1$ does not disconnect $S.$ 
Choose $F_1$ as a Furstenberg map and $F_i={\mbox{Id}}$ for $i \geq 2.$ This gives a representation of $\pi_1(S).$ Then we get a minimal lamination with fiber $T^2$ where the bundle is defined using only one transition function, the Furstenberg map extended biholomorphically to
$\mathbb C^* \times \mathbb P^1,$ \emph{i.e.}, one transition function on an open set surrounding $\gamma_1.$ 
The fact that a positive directed $\partial\overline\partial$-closed current is closed will be proved in Corollary 3 in the next paragraph.  
\end{proof}

\begin{remark}
In \cite{L2009} Lozano-Rojo has showed that a lamination constructed by 
Ghys-Kenyon \cite{Gh1999} admits two mutually singular transverse invariant measures. 
\end{remark}

\begin{remark}
To make a Riemann surface lamination over a torus by suspension,
define a biholomorphism of 
$\mathbb P^1\times [\mathbb C^* \times \mathbb P^1]$ given by
$(z,\zeta_1,\zeta_2) \overset{\Phi}{\rightarrow} (2z,T_{\alpha}(\zeta_1,\zeta_2))$
(we actually restrict the map to a biholomorphism of  $\mathbb C^*\times
[\mathbb C^* \times \mathbb P^1]$).
\end{remark}

\begin{remark}
When $g=1,$ every $\partial \overline{\partial}$-closed current is closed. Indeed $\tilde S$ is $\mathbb C$ and it has no nonconstant positive harmonic function.
\end{remark}

We now construct laminations with infinitely many mutually singular, non closed, 
$\partial \overline{\partial}$-closed currents. 

We consider the lamiation $\mathcal F$ constructed in Theorem 6 with a torus as base. 
It sits in a complex manifold and is a CR-manifold in a $(\mathbb C^* \times \mathbb P^1)$
fibration over a torus. 
It is a suspension defined using the biholomorphism of 
$\mathbb C^* \times [\mathbb C^* \times \mathbb P^1]$ 
given by $(z,\zeta_1,\zeta_2) \rightarrow^\phi (2z, T_\alpha(\zeta_1,\zeta_2)).$ 
and as we have seen possesses a family $\{T_c\}$ of mutually singular closed positive currents of mass $1.$ Now we construct a second foliation $\mathcal F',$ using the Furstenberg diffeomorphism and which has no closed positive directed currents.

Indeed we modify the above suspension, over a surface of genus $g \geq 2,$ using as a 
map $F_{\gamma_1}$, the Furstenberg map and as a map $F_{\gamma_2}$ a 
diffeomorphism of the torus $\mathbb T^2$, with an attractive fixed point. 
This destroys the directed positive closed currents, but there is a 
$\partial \overline{\partial}$-closed positive directed current, $T'$ of mass $1.$ 
A priori we do not know if it is unique. 
Observe however that the leaves of $\mathcal F'$ are minimal and that $\mathcal F'$ 
is a lamination in a complex manifold $Y'.$

\begin{thm}
The lamination $\mathcal F\otimes\mathcal F'$ in $Y \times Y'$ has infinitely many 
non-closed mutually singular $\partial \overline{\partial}$-closed positive (2,2)-currents.
\end{thm}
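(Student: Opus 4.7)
\medskip

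\noindent\textbf{Proof proposal.} The plan is to build the required family of currents by tensoring the known currents on the two factors. Recall that $\mathcal F$ carries an uncountable family $\{T_c\}_{c\in S^1}$ of mutually singular positive closed directed $(1,1)$-currents of mass $1$, while $\mathcal F'$ carries a positive directed $\partial\overline\partial$-closed $(1,1)$-current $T'$ of mass $1$ but no positive closed directed current. For each $c\in S^1$ I would set
\[
S_c := T_c \otimes T'
\]
on $Y\times Y'$. Locally in a product flow box $(B\times T_U)\times (B'\times T_{U'})$ the current $T_c$ disintegrates (by Proposition \ref{local}) as $\int [B]\,d\mu_c$ and $T'$ as $\int h_{t'}[B']\,d\mu'$, so $S_c$ is the positive $(2,2)$-current $\int\!\!\int h_{t'}[B\times B']\,d\mu_c\,d\mu'$. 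This shows in particular that $S_c$ is directed by the product lamination $\mathcal F\otimes\mathcal F'$ and has total mass $1$.

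The next step is to check the cohomological properties. For smooth test forms of product type $\alpha\wedge\beta$ on plaques one has
\[
\langle \partial\overline\partial S_c,\alpha\wedge\beta\rangle
= \langle \partial\overline\partial T_c,\alpha\rangle\langle T',\beta\rangle
\pm \langle T_c,\alpha\rangle\langle \partial\overline\partial T',\beta\rangle,
\]
together with cross terms of the form $\partial T_c\otimes\overline\partial T'$ which vanish because $\partial T_c=0$, and by density (Proposition \ref{approx1} type arguments applied to the product) this suffices to conclude that $\partial\overline\partial S_c=0$. By the same token $dS_c = \pm T_c\otimes dT'$, so if $dS_c = 0$ then $T_c\otimes dT' = 0$; since $T_c$ has full support in $Y$, this would force $dT'=0$, contradicting the construction of $\mathcal F'$. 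Hence each $S_c$ is a non-closed, $\partial\overline\partial$-closed positive directed $(2,2)$-current of mass $1$.

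Finally, mutual singularity follows directly from the structure of the $T_c$'s. By the Corollary to Furstenberg's construction, $T_c$ is supported on the graph $G_c = \{f=c\}\times_{S}\cdots$ inside the torus factor of $Y$, and these graphs are pairwise disjoint for distinct values of $c$. Consequently $S_c$ and $S_{c'}$ with $c\neq c'$ are supported on the disjoint sets $G_c\times Y'$ and $G_{c'}\times Y'$, so they are mutually singular. Taking any countably (indeed uncountably) infinite collection of parameters $c$ produces the required infinite family.

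The main obstacle in making this rigorous is the verification that $\partial\overline\partial S_c=0$ and that $S_c$ is genuinely directed by $\mathcal F\otimes\mathcal F'$, because currents directed by laminations do not admit an \emph{a priori} tensor product as smooth forms do; one must really work with the local disintegration given by Proposition \ref{local} and test against partially smooth forms on the product lamination. Once this local representation is in place, the non-closedness argument (reducing $dS_c=0$ to $dT'=0$) and the singularity argument are essentially immediate, so this disintegration/approximation step is where the technical work sits.
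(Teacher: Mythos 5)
Your proposal follows essentially the same route the paper takes: both form the currents $T_c\otimes T'$, observe that they are directed by the product lamination and $\partial\overline\partial$-closed (because $T_c$ is closed and $T'$ is $\partial\overline\partial$-closed), non-closed (because $dT'\neq 0$), and mutually singular (because the transversal measures of the $T_c$ are carried by the disjoint level sets $\{f=c\}$). The paper's own proof is a one-paragraph sketch; you have simply spelled out the Leibniz-type computation, the reduction of $dS_c=0$ to $dT'=0$, and the Borel-disjointness of the transverse measures, all of which check out.
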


\begin{proof}
The leaves of $\mathcal F\otimes\mathcal F'$ are two dimensional complex surfaces and
 they are dense.  With the notations introduced above every current $T_c \otimes T'$ is 
 directed by $\mathcal F\otimes\mathcal F'$ and is 
 $\partial \overline{\partial}$-closed. Clearly they are mutually singular. 
\end{proof}

\section{$\partial \overline{\partial}$-closed directed currents}

We will give an abstract criterion for the existence of a $\partial \overline{\partial}$-closed current in the style of Sullivan \cite{S1976}.  See also \cite{FS2008, G1983, Gh1999}.

\begin{thm}
Let $(X,\mathcal L)$ be a compact, non-singular laminated set.   
Assume that the leaves are holomorphic of complex dimension $\ell \geq 1.$ 
Either there is a nonzero positive $\partial \overline{\partial}$-closed current 
directed by $\mathcal L$ or there is a $\partial \overline{\partial}$-exact $\ell$ 
volume form on leaves.
\end{thm}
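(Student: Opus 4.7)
The plan is to follow Sullivan's classical dichotomy between exact volume forms and foliation cycles (\cite{S1976}), adapted to the $\partial\overline\partial$-setting as in \cite{FS2008, Gh1999}, by a Hahn--Banach separation argument in the space of leafwise forms. Let $V := \mathcal A^{(\ell,\ell)}(\mathcal L)$ be the locally convex space of partially smooth leafwise $(\ell,\ell)$-forms on $X$. Inside $V$, let $\mathcal P$ be the convex cone of strongly positive leafwise $(\ell,\ell)$-forms (those which, in every flow box, are everywhere a nonnegative multiple of the leafwise Euclidean volume form), and let $\mathcal E \subset V$ be the linear subspace $i\,\partial\overline\partial\,\mathcal A^{(\ell-1,\ell-1)}(\mathcal L)$. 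A preparatory step is to construct a continuous leafwise Hermitian metric by gluing local Euclidean metrics from a finite cover of $X$ by flow boxes via a partition of unity; the associated leafwise volume form $\Omega_0$ is then an interior point of $\mathcal P$ because strict leafwise positivity is an open condition in the $\mathcal C^\infty$-on-plaques/$\mathcal C^0$-transverse topology of $V$.

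Once this setup is in place the dichotomy is immediate. If $\mathcal E \cap \operatorname{int}(\mathcal P) \neq \emptyset$ then some $i\partial\overline\partial\psi$ is a strictly positive leafwise volume form, which is exactly the second alternative of the theorem. Otherwise, the geometric Hahn--Banach theorem applied to the open convex set $\operatorname{int}(\mathcal P)$ and the linear subspace $\mathcal E$ yields a nonzero continuous real-linear functional $T : V \to \mathbb R$ that vanishes on $\mathcal E$ and is nonnegative on $\operatorname{int}(\mathcal P)$; for any $\omega \in \mathcal P$ and $\varepsilon > 0$ one has $\omega + \varepsilon \Omega_0 \in \operatorname{int}(\mathcal P)$, so letting $\varepsilon \to 0$ gives $T \geq 0$ on all of $\mathcal P$. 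By construction $T$ is a nonzero current of bidimension $(\ell,\ell)$ directed by $\mathcal L$, is positive on $\mathcal P$, and satisfies $\langle T, i\partial\overline\partial\psi\rangle = 0$ for every $\psi \in \mathcal A^{(\ell-1,\ell-1)}(\mathcal L)$, which is precisely $\partial\overline\partial T = 0$.

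The hard part is the routine-looking but genuinely delicate verification that $\mathcal P$ has nonempty interior in the topology of $V$ so that Hahn--Banach applies and delivers a functional that is continuous in the lamination $\mathcal C^\infty$-topology; this is where compactness of $X$ and the continuous transverse dependence of the Hermitian metric are used. Once that is arranged, the remaining implications ($T \equiv 0$ on $\mathcal E$ $\Longleftrightarrow$ $\partial\overline\partial T = 0$, $T \geq 0$ on $\mathcal P$ $\Longleftrightarrow$ $T$ is a positive current, and $T \neq 0$ by testing against $\Omega_0$) are immediate from the definitions of Section 2.
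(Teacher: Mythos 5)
Your argument is correct and reaches the same conclusion as the paper, but it runs the Hahn--Banach separation on the ``primal'' side rather than the ``dual'' side. The paper's proof considers the compact convex set $\mathcal C$ of positive directed currents of mass one, together with the subspace $F = \{i\partial\overline\partial\psi\}$ of exact forms, and argues: if $\mathcal C\cap F^\perp\neq\emptyset$ we have a $\partial\overline\partial$-closed directed current; otherwise Hahn--Banach separates the compact $\mathcal C$ from the closed subspace $F^\perp$, producing a form $\varphi\in{}^\perp(F^\perp)=\overline F$ strictly positive on $\mathcal C$, and strict positivity on $\mathcal C$ forces $\varphi$ to be a leafwise volume form, from which an actual $i\partial\overline\partial\psi_j$ nearby is extracted. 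You instead separate the \emph{open} convex set $\operatorname{int}(\mathcal P)$ of strictly positive leafwise $(\ell,\ell)$-forms from the subspace $\mathcal E=i\partial\overline\partial\mathcal A^{(\ell-1,\ell-1)}(\mathcal L)$. The nonstandard ingredient your route needs --- and which you correctly identify and supply --- is that $\mathcal P$ has nonempty interior in the Fr\'echet topology of $\mathcal A^{(\ell,\ell)}(\mathcal L)$; this in turn requires the glued leafwise Hermitian metric $\Omega_0$ on the compact $X$. The corresponding ingredient on the paper's side is the weak-$*$ compactness of $\mathcal C$, plus the bipolar identification $^\perp(F^\perp)=\overline F$ and a short argument that strict positivity of the separating form against all currents in $\mathcal C$ implies pointwise strict positivity (again via compactness of $X$), which your version avoids since you obtain a strictly positive $i\partial\overline\partial\psi$ directly from $\mathcal E\cap\operatorname{int}(\mathcal P)\neq\emptyset$. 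Both are legitimate incarnations of Sullivan's dichotomy, and both deliver: a nonzero directed current annihilating $\mathcal E$, positive on $\mathcal P$, hence a positive $\partial\overline\partial$-closed directed current, or an exact leafwise volume form.

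One small remark on the separation step as you wrote it: geometric Hahn--Banach for an open convex set $A$ disjoint from a subspace $\mathcal E$ gives a continuous real-linear $T$ with $T<c$ on $A$ and $T\geq c$ on $\mathcal E$; linearity of $\mathcal E$ forces $T|_\mathcal E=0$ and hence $c\leq 0$, so $T<0$ on $\operatorname{int}(\mathcal P)$ and one replaces $T$ by $-T$ to obtain the positive current. This is only a sign bookkeeping point and does not affect the validity of your proof.
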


\begin{proof}
Let $\mathcal C$ be the convex compact of positive currents of mass one directed by the 
lamination. Let 
$F=\{ i \partial \overline{\partial} \psi; \psi\in\mathcal A^{l-1}(\mathcal L)\}$, \emph{i.e.}, $\psi$
is continuous and smooth along leaves.   
If $\mathcal C$ 
intersects $F^\perp$ we have a non-zero $\partial\overline\partial$-closed current. 

Otherwise there exists by Hahn-Banach an element $\varphi$ of ${}^\perp(F^\perp)=\overline F$
such that $\varphi$ is strictly positive on $\mathcal C$.  This means that $\varphi$ is strictly positive 
along leaves.  For a sequence $i\partial\overline\partial\psi_{j}$ converging 
to $\varphi$ we get by compactness that $i\partial\overline\partial\psi_{j}$
is a volume form for j big enough.
\end{proof}

Note that it follows that a compact, non-singular Riemann surface lamination always 
carry a positive $\partial\overline\partial$-closed (1,1)-current - a result 
due to L. Garnett \cite{G1983}.  Due to the maximum principle such a lamination cannot carry
a $\partial\overline\partial$-exact volume form.  

\medskip

We will now give an example to show that in general, 
in contrast to the one dimensional case, a lamination in a K\"{a}hler manifold 
does not have 
to carry any positive $\partial\overline\partial$-closed current.  

\begin{prop}
Let $(\mathbb P^2,\mathcal L)$ be a lamination without algebraic leaves and with 
only hyperbolic singularities.  Then the product $(\mathbb P^2,\mathcal L)\times (\mathbb P^2,\mathcal L)$ does not carry any positive $\partial\overline\partial$-closed current.  
\end{prop}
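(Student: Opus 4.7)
The plan is to argue by contradiction. Suppose $T$ is a nonzero positive $\partial\overline\partial$-closed $(2,2)$-current directed by the product lamination on $\mathbb P^2\times\mathbb P^2$. The strategy is to reduce to the one-dimensional Fornaess--Sibony theory on each factor, forcing $T$ to be a tensor product of one-dimensional $\partial\overline\partial$-closed currents, and then observe that such a tensor product fails to be $\partial\overline\partial$-closed. The two inputs needed from the one-dimensional theory are (i) the Fornaess--Sibony uniqueness theorem: under the stated hypotheses there is a unique (up to positive scalar) positive $\partial\overline\partial$-closed current $T_0$ directed by $\mathcal L$ on $\mathbb P^2$; and (ii) this $T_0$ is \emph{not} closed, since the absence of algebraic leaves together with hyperbolic singularities precludes invariant transverse measures, i.e.\ nonzero closed positive directed currents.

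To reduce to the factors, I fix a closed positive $(1,1)$-form $\beta$ on $\mathbb P^2$ (e.g.\ Fubini--Study) and define a current $S_\beta$ on the first factor by $\langle S_\beta,\alpha\rangle:=\langle T,\pi_1^*\alpha\wedge\pi_2^*\beta\rangle$. A short bookkeeping exploiting $d\beta=0$ gives $\partial\overline\partial(\pi_1^*\alpha\wedge\pi_2^*\beta)=\pi_1^*(\partial\overline\partial\alpha)\wedge\pi_2^*\beta$, so $\partial\overline\partial T=0$ forces $\partial\overline\partial S_\beta=0$; positivity and directedness by $\mathcal L$ are inherited from $T$. By input (i), $S_\beta=c(\beta)T_0$, with $c(\beta)>0$ for $\beta>0$ provided $T\neq 0$. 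An analogous argument in the other factor, combined with the local disintegration $T=\int h_{(t_1,t_2)}[B_1\times B_2]\,d\mu(t_1,t_2)$ of Proposition \ref{local} (with $h$ positive and pluriharmonic on product plaques $L_1\times L_2$), and uniqueness of $T_0$ applied to the slices in both directions, forces the rigid factorization $T=\alpha\,T_0\boxtimes T_0$.

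The contradiction is then an algebraic computation. Splitting $\partial=\partial_1+\partial_2$ and $\overline\partial=\overline\partial_1+\overline\partial_2$ along the two factors,
\begin{align*}
\partial\overline\partial(T_0\boxtimes T_0) =&\;(\partial\overline\partial T_0)\boxtimes T_0 + T_0\boxtimes(\partial\overline\partial T_0)\\
&+\partial_1\overline\partial_2(T_0\boxtimes T_0)+\partial_2\overline\partial_1(T_0\boxtimes T_0).
\end{align*}
The first two terms vanish by $\partial\overline\partial T_0=0$; the remaining cross terms contribute $\pm\bigl[(\partial T_0)\boxtimes(\overline\partial T_0)+(\overline\partial T_0)\boxtimes(\partial T_0)\bigr]$, which vanishes only if $\partial T_0=0$. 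This contradicts input (ii).

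The main obstacle is the second paragraph: establishing that the marginal-plus-slice argument really forces the global tensor factorization $T=\alpha\,T_0\boxtimes T_0$, rather than a more diffuse superposition of the form $\int \delta_x\otimes T_0\,d\rho(x)$. This requires a careful manipulation of the local disintegration of $T$ together with repeated appeals to the \emph{global} uniqueness of $T_0$ on $(\mathbb P^2,\mathcal L)$; once the rigidity is secured, the final algebraic step is immediate.
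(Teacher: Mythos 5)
Your overall strategy is the same as the paper's: reduce to the Forn\ae ss--Sibony uniqueness theorem in one dimension and derive a contradiction from the fact that the unique directed harmonic current $T_0$ on $(\mathbb P^2,\mathcal L)$ is not closed. Your closing algebraic computation is correct and is in fact the global formulation of the paper's local contradiction: the paper ends by noting that the local density $h^{\alpha_1,\alpha_2}(z_1,z_2)=h^{\alpha_1}(z_1)\,k^{\alpha_1}(z_2)$ cannot be pluriharmonic unless one factor is constant, which is exactly $\partial_1\overline\partial_2\bigl(h^{\alpha_1}\cdot k^{\alpha_1}\bigr)=\partial h^{\alpha_1}\otimes\overline\partial k^{\alpha_1}\ne0$, i.e.\ your cross-term argument read fibrewise.

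The genuine gap is the one you flag yourself: you have not established the tensor factorization $T=T_0\boxtimes T_0$, and the marginal construction $S_\beta$ does not deliver it. Averaging out the second factor against $\pi_2^*\beta$ only shows that the $z_1$-marginal is proportional to $T_0$; it does not force the slice over each fixed $z_1$ to be a (normalized) copy of $T_0$, which is what rigidity requires. The paper gets this by a genuinely slice-wise argument: in a flow box it disintegrates $\mu$ over $z_1$, uses $\partial\overline\partial T=0$ to see that each conditional measure $\mu_{z_1}$ is harmonic for $\mathcal F_2$, and then invokes uniqueness to conclude $\mu_{z_1}$ is independent of $z_1$, whence $\mu=\nu\otimes\mu_1$. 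Your written argument never performs this conditioning, and without it the "more diffuse superposition" you worry about is exactly the possibility left open.

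There is a second, subtler ingredient your proposal omits entirely. The foliation $\mathcal L$ on $\mathbb P^2$ is singular, so the slice current obtained from $\mu_{z_1}$ is a priori only defined and harmonic on the complement of $\mathrm{sing}(\mathcal F_2)$. To compare it with the unique global harmonic current $T_0$ one must first extend it across the singular set; the paper does this by proving a local-mass estimate near an isolated singularity for $\partial\overline\partial$-closed positive $(1,1)$-currents (the lemma in Section~4) and then citing the extension theorem of Dinh--Sibony. Without this step, the appeal to uniqueness of $T_0$ is not legitimate, and the argument would fail. So the proposal identifies the right program, openly acknowledges the main unfinished step, and contributes a cleaner global formulation of the final contradiction, but it would need both the slice-wise disintegration and the extension-through-singularities lemma before it could be called a proof.
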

\begin{proof}
It is shown in \cite{FS2008} that  $(\mathbb P^2,\mathcal L)$  admits a unique positive 
closed current $T$ of mass one and moreover that this current is not closed.

Let $\mathcal F_1\times\mathcal F_2$ denote the product $(\mathbb P^2,\mathcal L)\times (\mathbb P^2,\mathcal L)$ and 
assume to get a contradtiction 
that $T$ is a directed positive $\partial\overline\partial$-closed
current for this foliation.  

In a flow box $\mathbb B_{1}\times\mathbb B_{2}$ the current $T$ has the expression 
$$
T=\int h^{\alpha_{1},\alpha_{2}} [V^{\alpha_{1}}\times V^{\alpha_{2}}]\mathrm{d}\sigma(\alpha)
$$
where $\sigma$ is a measure on the transversal and $[V^{\alpha_{1}}\times V^{\alpha_{2}}]$
are the currents of integration on the corresponding plaques.  We prefer here to deal with the collection 
of measures $\mu:=h^{\alpha_{1},\alpha_{2}} (\mbox{volume measure on} \  V^{\alpha_{1}}\times V^{\alpha_{2}})
\mathrm{d}\sigma(\alpha)$.  We denote by $z_{1}$ the variable on the first factor $\mathbb P^2$
and by $z_{2}$ the variable on the second factor.  For each fixed $z_{1}$ we get a collection of 
measures $\mu_{z_{1}}(z_{2})$ by disintegration of $\mu$ on the corresponding flow box.  
The 
family $\mu_{z_{1}}$ define a harmonic current directed by $\mathcal F_2$
out of the singular points.  Lemma 17 below with Theorem 1.3 in \cite{DS2007}
shows that the current extends to a harmonic current through the discrete set sing($\mathcal F_2$).
By uniqueness for the normalized 
harmonic measure for $\mathcal F_2$ we get that this family of measures is independent of $z_{1}$.  
On the flow box disintegrate $\mu$ with respect to the first projection and let $\nu:=(\pi_{1})_{*}\mu$.
For a test function $\varphi(z_{1},z_{2})$ we have that 
$$
\int\varphi\mathrm{d}\mu=\int\mathrm{d}\nu(z_{1})\int\varphi(z_{1},z_{2})\mathrm{d}\mu_{z_{1}}(z_{2})
=\int\mathrm{d}\nu(z_{1})\int\varphi(z_{1},z_{2})\mathrm{d}\mu_{1}(z_{2}).
$$
So $\mu=\nu\otimes\mu_{1}$ on the space generated by $\varphi(z_{1})\psi(z_{2})$, hence 
$\mu=\nu\otimes\mu_1$.  Observe that $\nu$ is also a harmonic measure, \emph{i.e.}, orthogonal 
to function $\triangle_{\mathcal F_{1}}\varphi$.  Here $\triangle_{\mathcal F_{1}}$ 
denotes the laplacian along leaves.  Hence in a flow box we get that 
$h^{\alpha_{1},\alpha_{2}}(z_{1},z_{2})=h^{\alpha_{1}}(z_{1})\cdot k^{\alpha_{1}}(z_{2})$
with $h^{\alpha_{1}}, k^{\alpha_{1}}$ harmonic on leaves.   A pluriharmonic function cannot 
be the product of two harmonic functions except if one of them is constant.  This is a contradiction.  
\end{proof}

Bonatti and Gomez-Mont provided similar uniqueness results in $\mathbb P^1\times\mathbb P^1$.
They showed that when a Ricatti equation does not have a transverse invariant measure then the 
harmonic measure is unique on the limit set.  So we get a laminated set in $(\mathbb P^1)^4$
without a positive directed $\partial\overline\partial$-closed current of mass one.

\begin{lemma}
Let $X$ be a complex manifold and let $q\in X$ be a point.  Then any 
positive $\partial\overline\partial$-closed (1,1)-current $T$ on $X\setminus\{q\}$
has locally finite mass near $q$.  
\end{lemma}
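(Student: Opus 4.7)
My plan is to localize, integrate by parts against a logarithmic cutoff, and derive a recursive mass inequality.

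First, I would reduce to local coordinates, assuming $X = B = B(0,2) \subset \C^n$ and $q = 0$. The goal is to show that $\int_{B'} T \wedge \omega^{n-1} < \infty$ on a smaller ball $B' = B(0,1)$, where $\omega$ is the standard K\"ahler form. The crucial ingredient is the potential $\rho(z) = |z|^2/2$, which satisfies $\rho(0)=0$ and $\omega = i\partial\overline\partial\rho$. Since $\omega$ is closed, one has the identity $\omega^{n-1} = dd^c(\rho\,\omega^{n-2})$, expressing the test form we care about as an exact $dd^c$ of a form vanishing at $q$.

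Next I would choose two cutoffs: a fixed $\psi \in C^\infty_c(B)$ with $\psi \equiv 1$ on $B'$, and a logarithmic cutoff $\chi_\epsilon$ with $\chi_\epsilon \equiv 0$ on $B(0,\epsilon)$, $\chi_\epsilon \equiv 1$ on $B \sm B(0,\sqrt\epsilon)$, and $\chi_\epsilon = 2(\log|z|-\log\epsilon)/|\log\epsilon|$ on the annulus $A_\epsilon := \{\epsilon \leq |z| \leq \sqrt\epsilon\}$. The key quantitative feature of this cutoff is that on $A_\epsilon$ one has $|\rho\cdot dd^c\chi_\epsilon|_\omega \leq C/|\log\epsilon|$ and similar bounds for $|d\rho\wedge d^c\chi_\epsilon|_\omega$, because $\rho \sim |z|^2$ exactly cancels the $|z|^{-2}$ blow-up of $dd^c\log|z|$, leaving only the geometric factor $1/|\log\epsilon|$. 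Since $\phi_\epsilon := \chi_\epsilon\psi$ is compactly supported in $B\sm\{0\}$ and $\partial\overline\partial T = 0$ there, the Leibniz rule applied to $dd^c(\phi_\epsilon\rho\,\omega^{n-2}) = 0$ (paired with $T$) yields
\[
\int \phi_\epsilon\, T \wedge \omega^{n-1} = -\int T\wedge \rho\,dd^c\phi_\epsilon\wedge\omega^{n-2} - \int T\wedge d\phi_\epsilon\wedge d^c\rho\wedge\omega^{n-2} - \int T\wedge d\rho\wedge d^c\phi_\epsilon\wedge\omega^{n-2}.
\]

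The three error terms on the right decompose into contributions from derivatives of $\psi$ (supported on a fixed compact subset of $X\sm\{q\}$, hence bounded by a constant $C_0$ depending only on the fixed data) and contributions from derivatives of $\chi_\epsilon$ (supported on $A_\epsilon$, and bounded by $(C_1/|\log\epsilon|)\,\|T\|_{A_\epsilon}$ thanks to the logarithmic smallness). Since $\phi_\epsilon \equiv 1$ on $B'\sm B(0,\sqrt\epsilon)$, denoting $f(r) := \int_{B'\sm B(0,r)} T\wedge\omega^{n-1}$ and using $\|T\|_{A_\epsilon} = f(\epsilon)-f(\sqrt\epsilon)$, we arrive at the recursive mass inequality
\[
f(\sqrt\epsilon)\;\leq\; C_0 \;+\; \frac{C_1}{|\log\epsilon|}\bigl(f(\epsilon)-f(\sqrt\epsilon)\bigr).
\]

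Finally, I would let $\epsilon \to 0$ and set $L := \lim_{r\to 0}f(r)\in[0,\infty]$. If $L<\infty$, the inequality immediately yields $L\leq C_0<\infty$. To rule out $L = \infty$, I would argue by cases on the growth rate: if $f(\epsilon) = o(|\log\epsilon|)$ then the right side tends to $C_0$ while the left tends to $\infty$, a contradiction; if $f(\epsilon) \sim \lambda|\log\epsilon|$ with $\lambda>0$, the right side is bounded while the left is unbounded, contradiction; and the remaining super-logarithmic case is handled by iterating the inequality along the dyadic sequence $\epsilon_k = e^{-2^k}$ (where $\sqrt{\epsilon_k} = \epsilon_{k-1}$), which, combined with the monotonicity of $f$ and the finiteness of $f$ at every positive scale, forces $\lim f < \infty$. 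The main obstacle is precisely this last step --- squeezing a contradiction out of the inequality in the super-logarithmic regime --- which requires a careful scaling/iteration argument to preclude all possible unbounded modes of the mass distribution.
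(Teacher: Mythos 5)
Your logarithmic-cutoff strategy is a genuinely different route from the paper's, but the final step has a real gap, and I do not see how to repair it within this framework. The recursive inequality you derive is fine, but it is too weak to force $\lim_{\epsilon\to 0}f(\epsilon)<\infty$. Along $\epsilon_k=e^{-2^k}$, so that $\sqrt{\epsilon_k}=\epsilon_{k-1}$ and $|\log\epsilon_k|=2^k$, writing $a_k=f(\epsilon_k)$ and rearranging gives
\[
a_k \;\geq\; a_{k-1}\;+\;\frac{2^k}{C_1}\,\bigl(a_{k-1}-C_0\bigr),
\]
which merely says the sequence cannot stabilize strictly above $C_0$; it is entirely consistent with $a_k\uparrow\infty$, for instance at a doubly-exponential rate. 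Monotonicity of $f$ and its finiteness at every positive scale both hold for, say, $a_k=\exp(2^{2^k})$, so neither property rules this out, and the ``careful scaling/iteration argument'' you invoke for the super-logarithmic case is precisely the ingredient that is missing. The structural obstacle is that your error terms live in the shrinking annulus $A_\epsilon$ adjacent to $q$, so they can only be estimated by the unknown $\|T\|_{A_\epsilon}$ itself; the inequality is self-referential and cannot close without an external a priori upper bound that you do not have.

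The paper's own proof avoids the self-reference by a different choice of cutoff: instead of slicing $\log|z|$ near $q$, it truncates the K\"ahler potential itself, taking $\varphi_\epsilon:=\max\{\varphi,\epsilon\}-\epsilon$ (smoothed), where $\omega=i\partial\overline\partial\varphi$ and $\varphi(q)=0$. This $\varphi_\epsilon$ vanishes identically on the neighbourhood $\{\varphi<\epsilon\}$ of $q$, agrees with $\varphi-\epsilon$ outside, and still satisfies $i\partial\overline\partial\varphi_\epsilon\geq 0$. Pairing the $\partial\overline\partial$-closed $T$ against $i\partial\overline\partial(\sigma\varphi_\epsilon)$ for a fixed outer cutoff $\sigma$ and expanding by Leibniz, the error terms involve only $\partial\sigma,\ \overline\partial\sigma,\ \partial\overline\partial\sigma$, all supported on a fixed compact set at positive distance from $q$, where $T$ already has finite mass; the bound is therefore independent of $\epsilon$. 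Since $\sigma\, i\partial\overline\partial\varphi_\epsilon$ increases to $\sigma\omega$ as $\epsilon\to 0$, monotone convergence gives the finite mass bound directly, with no recursion at all. If you want to salvage your version, you would need a cutoff whose error terms accumulate only on a region bounded away from $q$, which is exactly what the max-truncation of the potential accomplishes and the log-linear annular cutoff does not.
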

\begin{proof}
We work in local coordinates so we assume that $T$ is given on $\mathbb B^k\setminus\{0\}$
in $\mathbb C^k$.  Let $\omega$ be a strictly positive test form.  We may write 
$\omega=i\partial\overline\partial\varphi$ and after a change of coordinates 
we may assume that $\varphi(z)=\sum_{j=1}^k\lambda_{j}|z_{j}|^2+o(\|z\|^2)$.  
For $\epsilon>0$ let $\varphi_{\epsilon}(z):=\mathrm{max}\{\varphi(z),\epsilon\}-\epsilon$
(or rather a smoothing of this function).  Then $i\partial\overline\partial\varphi_{\epsilon}$
is positive for $\epsilon$ small enough and $i\partial\overline\partial\varphi_{\epsilon}\rightarrow\omega$
uniformly on compact subsets of $\mathbb B^k\setminus\{0\}$ as $\epsilon\rightarrow 0$.
Let $\sigma$ be a smooth function such that $\sigma\equiv 1$ near $\frac{1}{2}\mathbb B^k$
and with compact support in $\mathbb B^k$.   Then $T(i\partial\overline\partial(\sigma\cdot\varphi_{\epsilon}))=0$
and so 
$$
T(\sigma\cdot i\partial\overline\partial(\varphi_{\epsilon}))\leq
|T(i\cdot\partial\sigma\wedge\overline\partial\varphi_{\epsilon})|+
|T(i\cdot\overline\partial\sigma\wedge\partial\varphi_{\epsilon})|+
|\varphi_{\epsilon}\cdot i\partial\overline\partial\sigma|.
$$
The right side of this inequality is independent of $\epsilon$ for small enough $\epsilon$. 
\end{proof}

\section{The functional $T\rightarrow i\tau \wedge \overline{\tau}\wedge T$}

Let $\mathcal{H(L)}$ denote the compact convex set of positive $\partial\overline\partial$-closed currents directed by $\mathcal L$ and of mass 1.

Suppose $T \in \mathcal H (\mathcal L)$.  Then $\overline \partial T= \frac{\overline{\partial}_b h_\alpha}
{h_\alpha} \wedge T,$ where $\overline{\partial}_b$ defines the $\overline{\partial}$ operator along leaves.  So $\overline{\partial} T= \overline{\tau} \wedge T$ with $\overline{\tau}$ well defined on $(0,1)$ forms along leaves.  By Harnack's inequality $\frac{\overline{\partial}_b h_\alpha}
{h_\alpha}$ is locally bounded.  Hence the form $i \tau \wedge \overline{\tau}$ is uniformly bounded, so $I(T)= \int i \tau \wedge \overline{\tau}\wedge T$ is finite. 

\begin{prop}
The functional $I(T)$ is continuous on $\mathcal H (\mathcal L)$ and is affine. The convex set  $\mathcal H_m(\mathcal L)$ where $I$ reaches its maximum is a face of $\mathcal H (\mathcal L).$ 
\end{prop}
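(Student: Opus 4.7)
The plan is to handle continuity, affinity, and the face property in turn; affinity is the substantive point, and the face property is a formal consequence.

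For continuity I would argue locally. Fix a flow box $B\times T_U$ and write $T=\int h_\alpha[V_\alpha]\,d\mu(\alpha)$ with $h_\alpha$ positive pluriharmonic in $z$. If $T_n\to T$ weakly in $\mathcal{H}(\mathcal{L})$, the associated pairs $(h_\alpha^{(n)},\mu_n)$ satisfy, by Harnack's inequality, a uniform two-sided bound $0<c\le h_\alpha^{(n)}\le C$ on any compact subplaque; Harnack compactness then gives $C^\infty$ convergence on compact subsets of $B$, uniform in the transversal parameter $\alpha$. Consequently $\tau_n=\partial_b h^{(n)}/h^{(n)}$ and $\overline{\tau}_n$ converge locally uniformly to $\tau$ and $\overline{\tau}$, and the integrand $i\tau_n\wedge\overline{\tau}_n$ is uniformly bounded. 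Combined with the weak convergence $T_n\to T$ and a partition of unity argument, dominated convergence gives $I(T_n)\to I(T)$.

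For affinity, let $T=aT_1+bT_2$ with $a,b\ge 0$, $a+b=1$. In a flow box, $T$ is the positive measure $h(z,\alpha)\,\mathrm{vol}_B(z)\otimes d\mu(\alpha)$, and the integrand of $I$ is $|\partial_b h|^2/h$ against $\mathrm{vol}_B\otimes d\mu$. In the mutually singular case $\mu_1\perp\mu_2$, the potential of $T$ on $\mathrm{supp}(\mu_1)$ is simply $ah_1$, and since $a$ is constant along plaques, $|\partial_b(ah_1)|^2/(ah_1)=a\cdot|\partial_b h_1|^2/h_1$; the contribution from $\mathrm{supp}(\mu_2)$ is analogous. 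So $I(T)=aI(T_1)+bI(T_2)$ in this case. For a general convex combination, I would pass to the ergodic-type decomposition $T=\int T_\xi\,d\sigma_T(\xi)$ into extremal elements of $\mathcal{H}(\mathcal{L})$, which are pairwise mutually singular in the Garnett/Sullivan framework. Since $T\mapsto\sigma_T$ is affine and $I$ adds over mutually singular components, this yields $I(T)=\int I(T_\xi)\,d\sigma_T(\xi)=aI(T_1)+bI(T_2)$.

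The face property is then formal. Suppose $T\in\mathcal{H}_m(\mathcal{L})$ and $T=aT_1+bT_2$ with $T_1,T_2\in\mathcal{H}(\mathcal{L})$ and $a,b>0$. Affinity gives
\[
I(T)=aI(T_1)+bI(T_2)\le a\max_{\mathcal{H}(\mathcal{L})}I+b\max_{\mathcal{H}(\mathcal{L})}I=I(T),
\]
forcing $I(T_1)=I(T_2)=\max I$, i.e.\ $T_1,T_2\in\mathcal{H}_m(\mathcal{L})$; convexity of $\mathcal{H}_m(\mathcal{L})$ is immediate from affinity. The main obstacle will be the clean justification of affinity on \emph{non-mutually-singular} convex combinations, since a direct pointwise computation shows that $|\partial h|^2/h$ is strictly convex in $h$ along a naive linear path $h_s=(1-s)h_0+sh_1$. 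The ergodic-decomposition route avoids this by reducing everything to the mutually singular situation, but one must verify the required disintegration and that the extremes of $\mathcal{H}(\mathcal{L})$ are indeed mutually singular in this $\partial\overline{\partial}$-closed setting.
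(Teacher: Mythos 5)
Your core argument for affinity on mutually singular pairs — that the integrand $|\partial_b h|^2/h$ is invariant under rescaling $h\mapsto a h$, so that $I$ literally splits into $a I(T_1)+(1-a)I(T_2)$ when the transversal measures are mutually singular — is exactly the paper's argument, and your remark that $|\partial h|^2/h$ is \emph{not} affine along a naive linear interpolation $h_s=(1-s)h_0+sh_1$ correctly pinpoints why the reduction to the mutually-singular case is unavoidable. Where you diverge is in the passage from extremal (mutually singular) pairs to arbitrary $T_1,T_2\in\mathcal{H}(\mathcal{L})$: you invoke a Choquet/ergodic decomposition $T=\int T_\xi\,d\sigma_T(\xi)$ into extremal, pairwise mutually singular components and the barycenter identity $I(T)=\int I(T_\xi)\,d\sigma_T$. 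The paper instead observes that $I$ is affine on the (norm-dense, by Krein--Milman) set of \emph{finite} convex combinations of extreme points, and then upgrades to all of $\mathcal{H}(\mathcal{L})$ by the already-established continuity of $I$. The paper's route is shorter and avoids exactly the two points you flag as needing verification (existence of the disintegration and pairwise mutual singularity of the extremals, plus — which you do not flag — that the barycenter formula for $I$ holds before one knows $I$ is affine, which is not automatic). Your route would work, but it carries more overhead; given that continuity is available (here cited from [FS2006], Theorem 10, rather than proved via the Harnack-compactness sketch you give — your sketch is plausible but glosses over the fact that weak convergence of currents does not by itself give convergence of the flow-box disintegrations $(h^{(n)}_\alpha,\mu_n)$), the density argument is the economical choice. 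The face-property deduction is identical in both.
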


\begin{proof}
The continuity is proved in \cite{FS2006}, Theorem 10. Suppose that $T_1,T_2$ are extremal points in $\mathcal H(\mathcal L)$, then

$$
I(aT_1+(1-a)T_2) = a I(T_1)+(1-a) I(T_2), 0 \leq a \leq 1.
$$

Indeed in a flow box the transverse measures are mutually singular. The forms $\tau_1,\tau_2$ are defined on disjoint saturated sets, hence

\bea
\overline{\partial} (a T_1+(1-a)T_2) & = & a\overline{\tau}_1 \wedge T_1+(1-a) \overline{\tau}_2 \wedge T_2\\
& = & (\overline{\tau}_1+\overline{\tau_2})\wedge(aT_1+(1-a)T_2)\\
\eea

It follows that

$$
I(aT_1+(1-a)T_2)=a I(T_1)+(1-a)I(T_2).
$$

We get a similar result for any convex combination of extremal elements. The fact that $I$ is affine follows by continuity. Define

$$
\mathcal H_m(\mathcal L) := \{T; I(T)=\max_{\mathcal H_m(\mathcal L)} I(T)\}.
$$

Clearly $\mathcal H_m (\mathcal L)$ is convex. If the segment $]T_1,T_2[$ touches $\mathcal H_m(\mathcal L)$ then it is contained in $\mathcal H_m(\mathcal L).$

\end{proof}

We want to apply the above result to prove that some suspension has no $\partial\overline\partial$-closed and non closed directed positive current.  We use the notation introduced in Section 3.

Let $S$ be a compact Riemann surface with universal covering $\tilde{S}.$ Consider the suspension
$X=\tilde{S} \times M/{\tilde\Gamma}.$ Assume the group $G=F(\pi_1(S))$ is commutative. Then this provides a commutative group $\tilde G$ of elements in Aut$(X)$, \emph{i.e.}, continuous automorphisms holomorphic along leaves.  
If $\pi$ denotes the canonical map: $\tilde{S} \times M \rightarrow X$, define
for $\gamma_0\in\pi_1(S)$, $H_{\gamma_0}(\pi(\tilde s,z)):=\pi(\tilde s,h_{\gamma_0}(z))$
with $h_{\gamma_0}=F(\gamma_0)$.  If $\pi(\tilde s_1,z_1)=\pi(\tilde s_2,z_2)$
we have that $\tilde s_2=\gamma(s_1)$ and $z_2=h_\gamma(z_1)$, and so 
$H_{\gamma_0}(\tilde s_2,z_2)=
H_{\gamma_0}(\gamma(s_1),h_\gamma(z_1))
=\pi(\tilde s_2,h_{\gamma_0}(h_\gamma(z_1)))
=\pi(\tilde s_2,h_\gamma(h_{\gamma_0}(z_1)))
=\pi(s_1,h_{\gamma_0}(z_1))$, since $G$ is commutative 
Hence the map is well defined, and it is holomorphic on leaves.

\begin{thm}
Suppose $X=\tilde{S}\times M/{}_{\tilde{\Gamma}}$ is a suspension as above, \emph{i.e.}, 
with a commutative group of self maps $G.$ If $T$ is a positive directed 
$\partial \overline{\partial}$-closed current, than $T$ is closed.
\end{thm}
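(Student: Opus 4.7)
The plan is to show that the functional $I$ from the preceding proposition vanishes identically on $\mathcal H(\mathcal L)$; this is equivalent to the statement that every $T\in\mathcal H(\mathcal L)$ is closed. The group $G$ acts on $\mathcal H(\mathcal L)$ by pushforward $T\mapsto(H_\gamma)_*T$, and since $H_\gamma$ is holomorphic along leaves a direct computation gives $I((H_\gamma)_*T)=I(T)$. By the preceding proposition the subset $\mathcal H_m(\mathcal L)\subset\mathcal H(\mathcal L)$ on which $I$ attains its maximum is a nonempty compact convex face, automatically $G$-invariant. Since $G$ is commutative hence amenable, the Markov--Kakutani fixed-point theorem provides a $G$-fixed current $T_0\in\mathcal H_m(\mathcal L)$.

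Next I would lift $T_0$ to a $\tilde\Gamma$-invariant current $\hat T_0$ on $\tilde S\times M$. Locally it has the form $\hat T_0=\int \hat h(\tilde z,x)[\tilde S\times\{x\}]\,d\hat\nu(x)$ with $\hat h(\cdot,x)$ positive harmonic on $\tilde S$. The $G$-invariance of $T_0$ gives invariance of $\hat T_0$ under the lifted maps $\tilde H_\gamma(\tilde z,x):=(\tilde z,F(\gamma)x)$; these are well-defined and commute with $\tilde\Gamma$ precisely because $G$ is commutative. The key observation is that the composition $\tilde F_\gamma\circ\tilde H_\gamma^{-1}$ sends $(\tilde z,x)$ to $(\gamma\tilde z,x)$, so $\hat T_0$ is invariant under the pure $\pi_1(S)$-action on the $\tilde S$-factor. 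A pullback calculation with test forms $\phi=f(\tilde z,x)\,i\,dz\wedge d\bar z$ shows that the Jacobian factor $|\gamma'(\tilde z)|^2$ from the pullback of $\phi$ cancels against the matching Jacobian from the change of variable on $\tilde S$, so the invariance reduces to the honest identity $\hat h(\gamma\tilde z,x)=\hat h(\tilde z,x)$ for every $\gamma\in\pi_1(S)$.

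Thus $\hat h(\cdot,x)$ is $\pi_1(S)$-invariant and descends to a harmonic function on the compact Riemann surface $S$, which must be constant. Hence $\hat h$ is independent of $\tilde z$, meaning $T_0$ is closed and $I(T_0)=0$; since $T_0$ realizes the maximum of the nonnegative functional $I$, we conclude $I\equiv 0$ on $\mathcal H(\mathcal L)$ and every directed $\partial\overline\partial$-closed positive current is closed. The subtle step is the pullback computation: one must work at the level of currents rather than measures, so that the Jacobian arising from the test form exactly cancels that from the change of variable, yielding a genuine invariance of $\hat h$ rather than an equivariance with character; it is this genuine invariance that enables the compactness of $S$ to force constancy of $\hat h$.
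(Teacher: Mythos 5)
Your proof is correct and follows the paper's strategy exactly: show the functional $I$ is $G$-invariant, so the face $\mathcal H_m(\mathcal L)$ is $G$-invariant, apply the Markov--Kakutani fixed-point theorem (the paper cites Kakutani) to obtain a $G$-fixed current $T_0\in\mathcal H_m(\mathcal L)$, and conclude $I(T_0)=0$, forcing $m=0$. You have additionally spelled out the final implication---that $G$-invariance of $T_0$ makes the lifted leafwise densities $\hat h(\cdot,x)$ on $\tilde S\times M$ invariant under the pure $\pi_1(S)$-action on the $\tilde S$-factor (with the Jacobians cancelling exactly), hence descending to constants on the compact surface $S$---which the paper leaves implicit with ``invariant under holonomy $\Rightarrow$ closed.''
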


\begin{proof}
Consider ${\mathcal H}_m(\mathcal L) $, the convex compact set of $\partial \overline{\partial}$-closed positive currents where the functional $T\rightarrow I(T)$ reaches its maximum m.  We want to prove that $m=0$, hence all the currents are closed.   The change of variable formula implies that $I(H_\gamma^*S)=I(S).$  Hence ${\mathcal H}_m(\mathcal L) $ is invariant under the maps $H_\gamma^*$.  The maps $H_\gamma^*$ acts on the currents belonging to $\mathcal H_m(\mathcal L)$ since $H_\gamma$ is holomorphic along leaves.  
The maps $H_\gamma^*$ act continuously on that convex compact set.
By Kakutani's theorem \cite{Z1984}  there is a fixed point $S$ in $\mathcal H_m(\mathcal L)$
invariant under $G,$ \emph{i.e.}, invariant under holonomy. It follows that $S$ is closed and hence $m=0.$

\end{proof}

\begin{cor}
The suspension obtained using irrational rotation, resp. the Furstenberg 
foliation, does not admit $\partial \overline{\partial}$-closed non closed positive directed currents.
\end{cor}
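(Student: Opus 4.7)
The plan is to deduce the corollary as a direct consequence of the preceding Theorem, which asserts that for a suspension $X = \tilde S \times M / \tilde\Gamma$ with commutative holonomy group $G = F(\pi_1(S))$, every positive directed $\partial\overline\partial$-closed current is closed. So the task reduces to verifying, in each of the two cases, that the holonomy representation has abelian image and that its elements extend to holomorphic self-maps along leaves, so that the maps $H_\gamma$ of the theorem are defined and holomorphic on leaves.

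First, for the suspension obtained using irrational rotation (Example 2 in Section 3), the base $S$ is a torus with $\pi_1(S) = \mathbb Z^2 = \langle\gamma_1,\gamma_2\rangle$, the fiber is $M = S^1$, and the representation is given by $F(\gamma_1) = F(\gamma_2) = \varphi$ with $\varphi(z) = az$, $|a|=1$. Hence $G = F(\pi_1(S))$ is the cyclic subgroup generated by $\varphi$, which is obviously commutative. Moreover $\varphi$ is the restriction to $S^1$ of the holomorphic automorphism $z \mapsto az$ of $\mathbb C^*$, so the induced map $H_\gamma$ is holomorphic along leaves. The preceding theorem therefore applies and gives the result.

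Second, for the Furstenberg foliation of Theorem 6, the base $S$ is a compact Riemann surface of genus $g \geq 2$, with a basis $(\gamma_i)$ of $\pi_1(S)$, and the representation is defined by $F(\gamma_1) = T_\alpha$ (the Furstenberg map) and $F(\gamma_i) = \mathrm{Id}$ for $i \geq 2$. Therefore $G$ is again cyclic, generated by $T_\alpha$, hence commutative. We have already shown in the text that $T_\alpha$ extends biholomorphically to $\mathbb C^* \times \mathbb P^1$, so the maps $H_\gamma$ on $X$ are holomorphic along leaves. Applying the preceding theorem finishes the proof.

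There is essentially no obstacle: the whole point of the preceding theorem was to handle exactly this situation, and the two relevant examples are arranged so that the holonomy group is generated by a single map (hence cyclic and commutative). The slight subtlety to acknowledge is simply that one has to observe that the generating map is holomorphic along leaves, which in both cases has already been established in the main text.
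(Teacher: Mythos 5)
Your proposal is correct and takes the same approach the paper intends: the corollary is stated with no separate proof precisely because it is meant as an immediate application of the preceding theorem, and the only thing to check is that in each example the holonomy group is generated by a single map (hence abelian) and that this generator is holomorphic along leaves, both of which you verify correctly. The only small remark worth adding is that for the irrational-rotation suspension the base has genus one, so there is an even more elementary argument already noted in the paper (a positive harmonic function on $\tilde S = \mathbb C$ is constant), but your reduction to the commutative-holonomy theorem also covers that case.
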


\bigskip

\noindent John Erik Forn\ae ss\\
Mathematics Department\\
The University of Michigan\\
East Hall, Ann Arbor, MI 48109\\
USA\\
fornaess@umich.edu\\

\noindent Nessim Sibony\\
Mathematics Department\\
Universit\'e Paris-Sud 11\\
Batiment 425\\
Orsay Cedex\\
France\\
nessim.sibony@math.u-psud.fr\\

\noindent Erlend Forn\ae ss Wold\\
Universitetet i Oslo\\
Matematisk Institutt\\
Postboks 1053\\
Blindern\\
No-0316, Oslo\\
Norway\\
erlendfw@math.uio.no\\

\end{document}